\documentclass[11pt,bibliography=totoc,reqno]{scrartcl}

\usepackage[a4paper,left=2cm,right=2cm,top=3cm,bottom=3cm]{geometry}

\usepackage[ansinew]{inputenc}
\usepackage[T1]{fontenc}
\usepackage[english]{babel}
\usepackage{marvosym}
\usepackage{stmaryrd}
\usepackage{caption}

\usepackage{units}
\usepackage{ifsym}
\usepackage[intlimits,leqno]{amsmath}
\usepackage{amsthm}
\usepackage{amssymb}
\usepackage{amsfonts}
\usepackage{dsfont}
\usepackage{exscale}
\usepackage{txfonts}
\usepackage{pxfonts}
\usepackage{bbold}
\usepackage{bbm}
\usepackage{wrapfig}
\usepackage{wasysym}
\usepackage{mathrsfs}
\usepackage{setspace}
\usepackage{todonotes}
\usepackage[all]{xy}
\usepackage{float}

%\allowdisplaybreaks[1]

\usepackage[numbers]{natbib}

\usepackage{multicol}

%\addbibresource{Literatur.bib}
%\bibliography{literatur}

%\setlength{\parindent}{0pt} %stellt den Texteinschub am Absatzanfang ein.

\newcommand{\e}{\operatorname{e}}
\newcommand{\rd}{\mathrm{d}}

\newcommand{\im}{\operatorname{im}}
\newcommand{\id}{\operatorname{id}}

\newcommand{\diag}{\operatorname{diag}}

\DeclareMathOperator{\tr}{tr}

\newcommand{\Hom}{\operatorname{Hom}}

\newtheorem{theorem}{Theorem}%[section]
%\swapnumbers %Nummer ist vorne
\newtheorem{remark}{Remark}%[section]
\newtheorem{lemma}[remark]{Lemma}%[section]
%[section]
%[section]
\newtheorem{proposition}[remark]{Proposition}%[section]
\theoremstyle{definition}
\newtheorem{definition}[remark]{Definition}%[section]
\newtheorem{example}[remark]{Example}%[section]

%\newtheorem{Behauptung}{Behauptung}

%\setlength{\parindent}{0em} % - Einrücken des Abtheoremes

%Tiefe der sections:
\setcounter{secnumdepth}{3}
\setcounter{tocdepth}{3}

%Kopfzeile - zur Section den Namen der Subsection
\usepackage{scrpage2} %für Kopfzeile
\pagestyle{scrheadings}
\automark[subsection]{section}

%\renewcommand{\arraystretch}{1.6} 

%\usepackage[pdftex,
%bookmarks=false,
%pdfstartpage=1, %1-für Startseite, 2 - für Inhaltsverzeichnis!
%pdfstartview=FitB, %FitB,FitH
%pdftitle={lattices of oscillator groups},
%pdfauthor={Mathias Fischer},
%pdfsubject={Kurzbeschreibung als ein theorem},
%pdfkeywords={......}
%]{hyperref} % - Labels etc. werden verlinkt !!! muss letztes package sein !!!

\begin{document}
%\pagenumbering{roman} %%Seitennummerierung
\pagenumbering{arabic} %%Seitennummerierung

%\chead{\headmark}%Kopfzeile - Titel ausschreiben

%--------------------------------------------------------------------------------------------------------------------------------
%\pagestyle{empty}
\title{Metric Symplectic Lie Algebras\footnote{funded by the Deutsche Forschungsgemeinschaft (DFG, German Research Foundation) - KA 1065/7-1}}
\author{Mathias Fischer}
\maketitle
\onehalfspacing %1,5 Zeilenabstand
%\tableofcontents %Inhaltsverzeichnis
%\newpage

%\pagestyle{scrheadings} %Kopfzeile
%\pagenumbering{arabic}%Seitennummerierung
%\onehalfspacing

\abstract{Every metric symplectic Lie algebra has the structure of a quadratic extension. We give a standard model and describe the equivalence classes on the level of corresponding quadratic cohomology sets. Finally, we give a scheme to classify the isomorphism classes of metric symplectic Lie algebras and give a complete list of all these Lie algebras in special cases.}

\section{Introduction}
In this paper we concentrate on metric symplectic Lie algebras $(\mathfrak g,\langle\cdot,\cdot\rangle,\omega)$. These are symplectic Lie algebras $(\mathfrak g,\omega)$ which are also metric Lie algebras $(\mathfrak g,\langle\cdot,\cdot\rangle)$. Of course, an isomorphism of metric symplectic Lie algebras is an isomorphism of the corresponding symplectic Lie algebras which is also an isomorphism of the corresponding metric Lie algebras.
We call a metric symplectic Lie algebra decomposable if it is isomorphic to the direct sum of two non-trivial metric symplectic Lie algebras.

Here a symplectic Lie algebra $(\mathfrak g,\omega)$ is a Lie algebra $\mathfrak g$ admitting a closed non-degenerate $2$-form $\omega$ on $\mathfrak g$, which we call symplectic form.
Two symplectic Lie algebras $(\mathfrak g_i,\omega_i)$, $i=1,2$ are isomorphic if there is an isomorphism $\varphi:\mathfrak g_1\rightarrow\mathfrak g_2$ of Lie algebras which preserves the symplectic forms in the sense $\varphi^*\omega_2=\omega_1$.
Symplectic Lie algebras are in one-to-one correspondence with simply connected Lie groups with leftinvariant symplectic forms.
Symplectic Lie algebras are also called quasi-Frobenius Lie algebras, since Frobenius Lie Algebras, i. e. Lie algebras admitting an non-degenerate exact $2$-form, are examples of symplectic Lie algebras.
Moreover, symplectic Lie algebras are examples of Vinberg algebras, since they naturally carry an affine structure.

A metric Lie algebra $(\mathfrak g,\langle\cdot,\cdot\rangle)$ is a Lie algebra $\mathfrak g$ with an non-degenerate symmetric bilinear form $\langle\cdot,\cdot\rangle$ on $\mathfrak g$, which is ad-invariant, i. e. 
\[\langle[X,Y],Z\rangle=\langle X,[Y,Z]\rangle\]
for all $X,Y,Z\in\mathfrak g$. Allthough, $\langle\cdot,\cdot\rangle$ is not necessary positive definite, we will call it inner product.
In the literature these Lie algebras are also called quadratic Lie algebras.
Two metric Lie algebras are isomorphic if there is an isomorphism of the corresponding Lie algebras, which is also an isometry of the inner products.

Metric Lie algebras are in one-to-one correspondence with simply connected Lie groups with biinvariant metrics.
They are also used for describing symmetric spaces,
since the Lie algebra of the transvection group of a symmetric space admits such a structure \cite[Proposition 1.6]{CP80}.

There are several classifications of metric or symplectic Lie algebras in low dimension (\cite{Sa01}, \cite{GJK01}, \cite{Ov06}, \cite{CS09}).
Especially, the nilpotent case (\cite{Bu06}, \cite{Ka07}, \cite{GKM04}, see also \cite{GB87}) is importent for our problem, since every metric symplectic Lie algebra is nilpotent.

If the aim is to give informations for arbitrary dimension, then usually a reduction scheme is used. In this context we mention 
double extensions (\cite{MR83},\cite{MR85}), $T^*$-extensions \cite{Bo97},  symplectic double extensions (\cite{MR91}, \cite{DM96}, \cite{DM962}) and oxidation \cite{BC13}.
The main idea is, that for every isotropic ideal $\mathfrak j$ of a metric or symplectic Lie algebra $\mathfrak g$ the Lie algebra $\overline{\mathfrak g}=\mathfrak j^\bot/\mathfrak j$ inherits an inner product or symplectic form respectively from $\mathfrak g$.
Conversely, they give a construction scheme taking a metric or symplectic Lie algebra $\overline{\mathfrak{g}}$ and some additional structure and construct a higher dimensional metric respectively symplectic Lie algebra, which can be reduced to $\overline{\mathfrak{g}}$ again.
Since the choice of the isotropic ideal is in general not canonical, it is hard to to give a general statement on the isomorphy of this Lie algebras with the help of these schemes. For instance, it is possible that extensions of two non-isomorphic low dimensional metric or symplectic Lie algebras are isomorphic.
Moreover, the presentation of a metric or symplectic Lie algebra as such an extension is not unique, since it depends on the choosen ideal.

The aim of this paper is to take this choices canonically for metric symplectic Lie algebras such that there is a certain standard model and the possibility to analyse the isomorphy systematically with this standard model.

Until now, there is just a few literature about metric symplectic Lie algebras (for example \cite{BBM07}).
These Lie algebras are in one-to-one correspondence to nilpotent metric Lie algebras with bijective skewsymmetric derivations.
On every Lie group with Lie algebra $\mathfrak g$ there is a flat and torsion-free connection $\nabla^\omega$ induced by $\omega$ and an left-invariant pseudo-Riemannian metric whose Levi-Civita connection equals $\nabla^\omega$ \cite{MR91}.

A classification scheme for metric Lie algebras from Kath and Olbrich denoted as quadratic extension is very useful for our aim. 
This scheme was first introduced in \cite{KO06}, where metric Lie algebras were discussed comprehensively.
Using the main idea of \cite{KO06} they introduced a classification scheme in \cite{KO08}, which can be used for metric Lie algebras and metric Lie algebras with additional structure.
Especially, metric Lie algebras with semisimple skewsymmetric derivations were treated in that paper in the notion of $(\mathds R,\{e\})$-equivariant metric Lie algebras.
Thus, this is useful for our problem.
Our aim is to expand the classification scheme in \cite{KO08} for metric Lie algebras with semisimple skewsymmetric derivations in such a way that it classifies all metric symplectic Lie algebras.
\\\quad\\
In the following, we describe the main idea of the present paper.
For every metric Lie algebra $\mathfrak g$ with bijective skewsymmetric derivation $D$ there is an $D$-invariant isotropic ideal $\mathfrak{i}$ of $\mathfrak g$ in a canonical way such that $\mathfrak i^\bot/\mathfrak i$ is abelian. This ideal is given by 
\begin{align}
\mathfrak i=\mathfrak i(\mathfrak g):=\sum_{k=0}^{n-1}\mathfrak g^{k+1}\cap{\mathfrak g^{k+1}}^\bot
\end{align}
(compare \cite{Ka07}).
Here $\mathfrak g^1:=\mathfrak g,\dots,\mathfrak g^k:=[\mathfrak g,\mathfrak g^{k-1}]$ denotes the decreasing central series of the nilpotent Lie algebra $\mathfrak g$ and $n$ the smallest positive integer such that $\mathfrak g^n=\{0\}$.
For the definition of this ideal in the case of non-nilpotent metric Lie algebras see \cite{KO06} or \cite{KO08}.
We set $\mathfrak a=\mathfrak i^\bot/\mathfrak i$ and $\mathfrak l=\mathfrak g/\mathfrak i^\bot$ and obtain that $\mathfrak a$ is an abelian metric symplectic Lie algebra and inherits the structure of a trivial $\mathfrak l$-module. 
Since $\mathfrak i$ is isomorphic to $\mathfrak l^*$, we can write $\mathfrak g$ as two abelian extensions of Lie algebras with bijective derivations
\begin{align}\label{eq:2aE}
0\rightarrow\mathfrak l^*\rightarrow\mathfrak g\rightarrow\mathfrak h\rightarrow 0
,\quad 0\rightarrow\mathfrak a\rightarrow\mathfrak h\rightarrow\mathfrak l\rightarrow 0
\end{align}
in a canonical way.
Here $\mathfrak h=\mathfrak g/\mathfrak i$.
Conversely, two abelian extensions given as in (\ref{eq:2aE}) define a metric symplectic Lie algebra for every Lie algebra $\mathfrak l$ with bijective derivation and $\mathfrak l$-module $\mathfrak a$, unless the abelian extensions satisfy certain compatibility conditions.
Then the image of $\mathfrak l^*$ in $\mathfrak g$ is usually not equal to the canonical isotropic ideal.

We will introduce this construction scheme under the notion of quadratic extensions.
In general, not every quadratic extension of $\mathfrak l$ by $\mathfrak a$ is given by the canonical isotropic ideal $\mathfrak i(\mathfrak g)$.
Thus we also introduce balanced quadratic extensions. We call a quadratic extension balanced, if the image of $\mathfrak l^*$ in $\mathfrak g$ equals the canonical isotropic ideal.
Every metric symplectic Lie algebra has the structure of a balanced quadratic extension in a canonical way.
There is a natural equivalence relation on the set of quadratic extensions of $\mathfrak l$ by $\mathfrak a$.
Moreover, we define a non-linear cohomology $H^p_{Q+}(\mathfrak{l},D_\mathfrak{l},\mathfrak{a})$ of the Lie algebra $\mathfrak l$ with coefficients in $\mathfrak a$, which includes the cohomology $H^p_{Q}(\mathfrak{l},\phi_\mathfrak{l},\mathfrak{a})$ considered in \cite{KO08}. Then we prove that the equivalence classes of quadratic extensions of $(\mathfrak l,D_\mathfrak l)$ by $(\mathfrak a,D_\mathfrak a)$ are in bijection to the second cohomology set $H^2_{Q+}(\mathfrak{l},D_\mathfrak{l},\mathfrak{a})$.
For the proof we cite necessary results for metric Lie algebras with semisimple skewsymmetric derivations and fit this to the case of not necessary semisimple derivations.
Moreover, we give the standard model $(\mathfrak{d}_{\alpha,\gamma},D_{\delta,\epsilon})$ of a metric Lie algebra with skewsymmetric derivation, which defines a quadratic extension of $\mathfrak{l}$ by $\mathfrak{a}$ for every $[\alpha,\gamma,\delta,\epsilon]\in H^2_{Q+}(\mathfrak{l},D_\mathfrak{l},\mathfrak{a})$.

The equivalence classes of balanced quadratic extensions of $(\mathfrak l,D_\mathfrak l)$ by $(\mathfrak a,D_\mathfrak a)$ are described by $H^2_{Q+}(\mathfrak{l},D_\mathfrak{l},\mathfrak{a})_b\subset H^2_{Q+}(\mathfrak{l},D_\mathfrak{l},\mathfrak{a})$ on the level of the cohomology classes.
We also call these cocycles balanced.
Since balanced is a property of a quadratic extension which does only depend on the structure of the corresponding metric Lie algebra, we can use \cite{KO06} for describing balanced cocycles, or more precisely \cite{Ka07} for nilpotent, metric Lie algebras.
We also have the notion of indecomposable balanced cohomology classes, contained in $H^2_{Q+}(\mathfrak l,D_\mathfrak l,\mathfrak a)_0\subset H^2_{Q+}(\mathfrak l,D_\mathfrak l,\mathfrak a)_0$.
The automorphism group $G(\mathfrak{l},D_\mathfrak{l},\mathfrak{a})$ of the pair $((\mathfrak l,D_\mathfrak l),\mathfrak a)$ acts on $H^2_{Q+}(\mathfrak l,D_\mathfrak l,\mathfrak a)_0$.
Altogether, the isomorphism classes of metric, symplectic Lie algebras are in one-to-one correspondence to 
\begin{align*}
\coprod_{(\mathfrak l,D_\mathfrak l)\in \mathcal L}
\coprod_{\mathfrak a\in \mathcal A_{\mathfrak l,D_\mathfrak l}}
H^2_{Q+}(\mathfrak l,D_\mathfrak l,\mathfrak a)_0/G(\mathfrak l,D_\mathfrak l,\mathfrak a),
\end{align*}
where $\mathcal L$ is a system of representatives of the isomorphism classes of nilpotent Lie algebras with bijective derivations and $\mathcal A_{\mathfrak l,D_\mathfrak l}$ a system of representatives of the isomorphism classes of abelian metric Lie algebras with bijective skewsymmetric derivations (considered as trivial $\mathfrak l$-modules).
Using this classification scheme we obtain the following results:
\begin{itemize}
 \item There is only one non-abelian metric Lie algebra (up to isomorphism) of dimension less than eight, which admits symplecic forms.
 \item There are no metric symplectic Lie algebras whose inner product has an index of one or two, except for abelian ones.
 \item We calculate all isomorphism classes of metric symplectic Lie algebras of dimension less than ten und
 \item we calculate every non-abelian metric symplectic Lie algebra with an index of three up to isomorphism.
\end{itemize}
\quad\\
The paper is organized as follows.
We introduce the notion of (balanced) quadratic extensions in section \ref{MSLA:QuadrErw} and show that every metric symplectic Lie algebra has the structure of a (balanced) quadratic extension in a canonical way.
Furthermore, we give an equivalence relation on the set of quadratic extensions.
Then, we define the quadratic cohomology $H^p_{Q+}(\mathfrak{l},D_\mathfrak{l},\mathfrak{a})$ in section \ref{MSLA:Kohom}.
We also define the isomorphism of pairs and give there action on the quadratic cohomology.
In section \ref{MSLA:Std} we define the standard model $(\mathfrak{d}_{\alpha,\gamma},D_{\delta,\epsilon})$ and show necessary and sufficient conditions on the level of the corresponding cocycles, when $(\mathfrak{d}_{\alpha,\gamma},D_{\delta,\epsilon})$ has the structure of a (balanced) quadratic extension.
We show that every quadratic extension is equivalent to a suitable standard model (section \ref{MSLA:ÄquivStd}) and describe the equivalence of standard models (section \ref{MSLA:Äquivkl}).
In the end of section \ref{MSLA:Äquivkl} we obtain a bijection between $H^2_{Q+}(\mathfrak{l},D_\mathfrak{l},\mathfrak{a})$ and the equivalence classes of quadratic extensions of $\mathfrak{l}$ by $\mathfrak{a}$.
In section \ref{MSLA:Isomkl} we describe the isomorphy of standard models on the level of the corresponding cohomology classes and obtain, finally, the classification scheme for metric symplectic Lie algebras.
As an application, we calculate in section \ref{MSLA:Anw} all non-abelian metric symplectic Lie algebras (up to isomorphisms) whose index of the inner product is less than four.
Moreover, we give a system of representatives of all non-abelian metric symplectic Lie algebras of dimension less than ten.

\section{Metric symplectic Lie algebras}\label{MSLA}

\begin{definition}
A metric, symplectic Lie algebra $(\mathfrak{g},\langle\cdot,\cdot\rangle,\omega)$ is a Lie algebra $\mathfrak g$ with a non-degenerate (not necessary positive definite) symmetric bilinear form $\langle\cdot,\cdot\rangle:\mathfrak{g}\times\mathfrak{g}\rightarrow\mathds{R}$ 
and a non-degenerate skewsymmetric bilinear form $\omega:\mathfrak{g}\times\mathfrak{g}\rightarrow\mathds{R}$ satisfying 
\[\langle X,[Y,Z]\rangle=\langle[X,Y],Z \rangle\quad\text{and}\]
\[\rd\omega(X,Y,Z)=-\omega([X,Y],Z)+\omega([X,Z],Y)-\omega([Y,Z],X)=0\]
for all $X,Y,Z\in\mathfrak g$.
Usually, we will call $\langle\cdot,\cdot\rangle$ inner product and $\omega$ symplectic form.
\end{definition}

\begin{definition}
An isomorphism $\varphi$ between metric symplectic Lie algebras $(\mathfrak{g}_1,\langle\cdot,\cdot\rangle_1,\omega_1)$ and $(\mathfrak{g}_2,\langle\cdot,\cdot\rangle_2,\omega_2)$ is an isomorphism $\varphi:\mathfrak{g}_1\rightarrow\mathfrak{g}_2$ of the corresponding Lie algebras, which is an isometry, i. e. $\varphi^*\langle\cdot,\cdot\rangle_2=\langle\cdot,\cdot \rangle_1$, and satisfies $\varphi^*\omega_2=\omega_1$.
\end{definition}

\section{Quadratic extensions}\label{MSLA:QuadrErw}

In this section we introduce the necessary notion of quadratic extensions for the new classification scheme for metric symplectic Lie algebras. Moreover, we show that every metric symplectic Lie algebra has the structure of a quadratic extension in a canonical way.
\\\quad\\
Let $(\mathfrak g,\langle\cdot,\cdot\rangle,\omega)$ be a metric symplectic Lie algebra. Then
\begin{align}\label{eq:omegaD}
\omega=\langle\cdot,D\cdot\rangle
\end{align}
defines a bijective skewsymmetric map $D$ on $\mathfrak g$, since $\langle\cdot,\cdot\rangle$ is non-degenerate. Here $D$ is called skewsymmetric, if $\langle DX,Y\rangle=-\langle X,DY\rangle$ for all $X,Y\in\mathfrak g$. Moreover, $D$ is a derivation on $\mathfrak g$, since $\omega$ is closed. Now, the existence of a bijective derivation on $\mathfrak g$ implies, that $\mathfrak g$ is nilpotent \cite[Theorem 3]{Ja55}.
Conversely, every bijective skewsymmetric derivation of a nilpotent metric Lie-Algebra $\mathfrak g$ defines a symplectic form $\mathfrak g$ by equation (\ref{eq:omegaD}).
Thus we obtain the following Lemma.
\begin{lemma}\label{lm:msla1}
Metric symplectic Lie algebras are in one-to-one correspondence with nilpotent metric Lie-Algebras with skewsymmetric bijective derivations.
\end{lemma}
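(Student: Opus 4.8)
The plan is to make explicit the two mutually inverse constructions already indicated in the discussion preceding the statement, and then to check that they are compatible with isomorphisms, so that the correspondence descends to isomorphism classes as well.

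First I would start from a metric symplectic Lie algebra $(\mathfrak g,\langle\cdot,\cdot\rangle,\omega)$ and define $D\colon\mathfrak g\to\mathfrak g$ by $\omega(X,Y)=\langle X,DY\rangle$; this is a well-defined linear map because $\langle\cdot,\cdot\rangle$ is non-degenerate, and it is bijective because $\omega$ is non-degenerate. Skewsymmetry of $D$ is immediate from the skewsymmetry of $\omega$ and the symmetry of $\langle\cdot,\cdot\rangle$: $\langle DX,Y\rangle=\langle Y,DX\rangle=\omega(Y,X)=-\omega(X,Y)=-\langle X,DY\rangle$. The closedness condition $\rd\omega=0$ is then rewritten, using $\omega=\langle\cdot,D\cdot\rangle$ together with the ad-invariance identity $\langle[X,Y],Z\rangle=\langle X,[Y,Z]\rangle$ and the skewsymmetry of $D$: expanding $\rd\omega(X,Y,Z)$ and moving $D$ off each bracket one obtains $\rd\omega(X,Y,Z)=\langle X,\,D[Y,Z]-[DY,Z]-[Y,DZ]\rangle$, so by non-degeneracy $\rd\omega=0$ is equivalent to the derivation identity $D[Y,Z]=[DY,Z]+[Y,DZ]$. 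This is the one genuinely computational step, but it is routine. Since $\mathfrak g$ now carries the bijective derivation $D$, it is nilpotent by \cite[Theorem 3]{Ja55}; hence $(\mathfrak g,\langle\cdot,\cdot\rangle,D)$ is a nilpotent metric Lie algebra with a bijective skewsymmetric derivation. Conversely, given such a triple $(\mathfrak g,\langle\cdot,\cdot\rangle,D)$, I set $\omega:=\langle\cdot,D\cdot\rangle$: skewsymmetry of $D$ makes $\omega$ skewsymmetric, bijectivity of $D$ together with non-degeneracy of $\langle\cdot,\cdot\rangle$ makes $\omega$ non-degenerate, and reversing the computation above gives $\rd\omega=0$, so $(\mathfrak g,\langle\cdot,\cdot\rangle,\omega)$ is metric symplectic. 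The assignments $\omega\mapsto D$ and $D\mapsto\omega$ are inverse by construction, so they yield a bijection between the two classes of objects sharing the same underlying metric Lie algebra.

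Finally I would observe that this bijection is natural. For a linear isomorphism $\varphi\colon\mathfrak g_1\to\mathfrak g_2$ with $\varphi^*\langle\cdot,\cdot\rangle_2=\langle\cdot,\cdot\rangle_1$, the condition $\varphi^*\omega_2=\omega_1$ reads $\langle X,D_1Y\rangle_1=\langle\varphi X,D_2\varphi Y\rangle_2=\langle X,\varphi^{-1}D_2\varphi Y\rangle_1$ for all $X,Y$, which by non-degeneracy is equivalent to $\varphi\circ D_1=D_2\circ\varphi$. Hence $\varphi$ is an isomorphism of metric symplectic Lie algebras exactly when it is an isomorphism of the associated metric Lie algebras intertwining the derivations, and the correspondence therefore also identifies the isomorphism classes. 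The only external ingredient is Jacobson's nilpotency theorem; everything else is elementary linear algebra, so I do not anticipate any real obstacle.
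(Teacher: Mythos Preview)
Your proposal is correct and follows essentially the same approach as the paper: define $D$ via $\omega=\langle\cdot,D\cdot\rangle$, read off bijectivity and skewsymmetry from non-degeneracy and antisymmetry of $\omega$, translate $\rd\omega=0$ into the derivation identity, and invoke Jacobson's theorem for nilpotency, then reverse the construction. You add the explicit computation showing $\rd\omega(X,Y,Z)=\langle X,\,D[Y,Z]-[DY,Z]-[Y,DZ]\rangle$ and the naturality check that $\varphi^*\omega_2=\omega_1$ is equivalent to $\varphi D_1=D_2\varphi$, both of which the paper leaves implicit, but the underlying argument is identical.
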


From now on, we conzentrate on metric Lie algebras $(\mathfrak{g},\langle\cdot,\cdot\rangle)$ with skewsymmetric derivation $D$ and write abbreviatory $(\mathfrak{g},\langle\cdot,\cdot\rangle,D)$, $(\mathfrak g,D)$ or simply $\mathfrak g$ unless it is clear from the context that this is a metric Lie algebra with a skewsymmetic derivation.

\begin{definition}
An isomorphism (homomorphism) $\varphi$ between Lie algebras with derivations $(\mathfrak{g}_1,D_1)$ and $(\mathfrak{g}_2,D_2)$ is an isomorphism (homomorphism) $\varphi:\mathfrak{g}_1\rightarrow\mathfrak{g}_2$ of the corresponding Lie algebras, which satisfies $D_2\varphi=\varphi D_1$.
An isomorphism between metric Lie algebras with derivations is an isomorphism of Lie algebras with derivations, which is in addition an isometry.
\end{definition}

\begin{lemma}\label{lm:33}
Let $(\mathfrak g,D)$ be a metric Lie algebra with derivation. Then the semisimple part $D_s$ of the Jordan decomposition of the skewsymmetric derivation $D$ is also a skewsymmetric derivation of $\mathfrak g$.
\end{lemma}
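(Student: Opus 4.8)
The plan is to exploit the fact that the Jordan decomposition $D = D_s + D_n$ of a derivation $D$ of a finite-dimensional Lie algebra over $\mathds R$ automatically has both components $D_s$ and $D_n$ again derivations; this is classical (it follows from the analogous statement over $\mathds C$ applied to the complexification, together with the fact that the real and imaginary parts commute with the derivation property, or one can cite it directly). So the only thing that really needs to be checked is that $D_s$ is \emph{skewsymmetric} with respect to $\langle\cdot,\cdot\rangle$, given that $D$ is. The subtlety is that a priori the Jordan decomposition is taken purely as a linear map, and skewsymmetry with respect to an indefinite inner product is not automatically inherited by semisimple parts — one has to use the compatibility of the inner product with the Lie bracket, i.e. ad-invariance, which forces the relevant eigenspace decomposition to respect $\langle\cdot,\cdot\rangle$.

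First I would pass to the complexification $\mathfrak g_{\mathds C}$, extend $\langle\cdot,\cdot\rangle$ complex-bilinearly and $D$ complex-linearly, and decompose $\mathfrak g_{\mathds C} = \bigoplus_{\lambda} \mathfrak g_{\mathds C}^{(\lambda)}$ into generalized eigenspaces of $D$, where $\mathfrak g_{\mathds C}^{(\lambda)} = \ker (D-\lambda\,\id)^{\dim \mathfrak g}$. Then $D_s$ acts on $\mathfrak g_{\mathds C}^{(\lambda)}$ as $\lambda\cdot\id$. The key computation is the standard one: for $X \in \mathfrak g_{\mathds C}^{(\lambda)}$ and $Y \in \mathfrak g_{\mathds C}^{(\mu)}$, skewsymmetry of $D$ gives $\langle (D-\lambda\id)^k X, Y\rangle = (-1)^k \langle X, (D+\lambda\id)^k Y\rangle$; iterating and using that $D$ acts on $\mathfrak g_{\mathds C}^{(\mu)}$ as $\mu\id$ plus nilpotent, one finds that $\langle \mathfrak g_{\mathds C}^{(\lambda)}, \mathfrak g_{\mathds C}^{(\mu)}\rangle = 0$ unless $\mu = -\lambda$. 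Consequently, for $X \in \mathfrak g_{\mathds C}^{(\lambda)}$, $Y \in \mathfrak g_{\mathds C}^{(\mu)}$ we get $\langle D_s X, Y\rangle = \lambda\langle X, Y\rangle$, which is nonzero only when $\mu = -\lambda$, in which case it equals $-\mu\langle X, Y\rangle = -\langle X, D_s Y\rangle$; and when $\mu \neq -\lambda$ both sides vanish. Hence $D_s$ is skewsymmetric on $\mathfrak g_{\mathds C}$, and since $D_s$ is a real operator (the generalized eigenspace decomposition is defined over $\mathds R$ after grouping complex-conjugate eigenvalues), it is skewsymmetric on $\mathfrak g$.

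It remains to record why $D_s$ is a derivation. Here I would simply invoke the classical fact (e.g. via the complexification and the $\mathfrak{gl}$-version of Jordan decomposition, or a direct argument: if $D$ is a derivation then so is any polynomial-in-$D$-without-constant-term restricted appropriately, and more precisely $D_s = p(D)$ for a polynomial $p$ with $p(0)=0$ when $D$ is invertible, but in general one argues on the grading by eigenvalues of $D_s$ that $[\mathfrak g_{\mathds C}^{(\lambda)}, \mathfrak g_{\mathds C}^{(\mu)}] \subseteq \mathfrak g_{\mathds C}^{(\lambda+\mu)}$, which is exactly the derivation property for $D_s$). The main obstacle is organizing the eigenspace bookkeeping cleanly — in particular making sure the orthogonality statement $\langle \mathfrak g_{\mathds C}^{(\lambda)}, \mathfrak g_{\mathds C}^{(\mu)}\rangle = 0$ for $\mu \neq -\lambda$ is proved before it is used, since everything else follows formally from it. I expect no genuine difficulty beyond that; the real-versus-complex descent and the derivation property are routine.
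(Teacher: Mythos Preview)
Your proposal is correct and follows essentially the same approach as the paper: both pass to the complexification, decompose into generalized eigenspaces of $D$, show $[\mathfrak g_{\mathds C}^{(\lambda)},\mathfrak g_{\mathds C}^{(\mu)}]\subseteq \mathfrak g_{\mathds C}^{(\lambda+\mu)}$ via the binomial expansion of $(D-(\lambda+\mu)\id)^k$ to get the derivation property, and use skewsymmetry of $D$ to obtain $\langle \mathfrak g_{\mathds C}^{(\lambda)},\mathfrak g_{\mathds C}^{(\mu)}\rangle=0$ for $\mu\neq-\lambda$, from which skewsymmetry of $D_s$ is immediate. One small correction to your framing: ad-invariance of $\langle\cdot,\cdot\rangle$ plays no role here---the orthogonality of generalized eigenspaces for non-opposite eigenvalues is a pure linear-algebra consequence of $D$ being skewsymmetric with respect to a nondegenerate bilinear form, and indeed neither your computation nor the paper's invokes the Lie bracket at that step.
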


\begin{proof}
Consider the complexification of $D$ and $\langle\cdot,\cdot\rangle$, if $D$ has nonreal eigenvalues.
Let $v_1$ and $v_2$ denote two generalized eigenvectors of $D$ corresponding to the eigenvalues $\lambda_1$ and $\lambda_2$.
For a sufficiently large $k\in\mathds R$ we have that 
\begin{align*}
\big(D-(\lambda_1+\lambda_2)\id\big)^k[v_1,v_2]=\sum_{i=0}^k {k \choose i}\big[(D-\lambda_1\id)^{k-i}v_1,(D-\lambda_2\id)^iv_2\big]
\end{align*}
vanishs. Thus $[v_1,v_2]$ is an vector of the generalized eigenspace for $\lambda_1+\lambda_2$ and we obtain $D_s[v_1,v_2]=(\lambda_1+\lambda_2)[v_1,v_2]$.
Thus, $D_s$ is a derivation, since
\[D_s[v_1,v_2]=(\lambda_1+\lambda_2)[v_1,v_2]=[\lambda_1v_1,v_2]+[v_1,\lambda_2v_2]=[D_sv_1,v_2]+[v_1,D_sv_2].\]
Since $D$ is skewsymmetric, the generalized eigenspaces for $\lambda_1$ and $\lambda_2$, $\lambda_1\neq-\lambda_2$ are orthogonal to each other.
This subspaces are invariant under $D_s$. Thus $\langle D_sv_1,v_2\rangle+\langle v_1,D_sv_2\rangle=0$. For $\lambda_1=-\lambda_2$ we obtain
\[\langle D_sv_1,v_2\rangle+\langle v_1,D_sv_2\rangle=\lambda_1\langle v_1,v_2\rangle-\lambda_1\langle v_1,v_2\rangle=0.\] Hence $D_s$ is skewsymmetric.
\end{proof}

\begin{definition}
Let $\mathfrak{l}$ be a Lie algebra.
The triple $(\rho,\mathfrak a,\langle\cdot,\cdot\rangle_\mathfrak a)$ is called orthogonal $\mathfrak l$-module, if $(\mathfrak a,\langle\cdot,\cdot\rangle)$ is an abelian metric Lie algebra and $\rho:\mathfrak l\rightarrow \Hom(\mathfrak a)$ a representation of the Lie algebra $\mathfrak l$ on $\mathfrak a$, which is  skewsymmetric, i. e. $\langle\rho(L)A_1,A_2\rangle=-\langle A_1,\rho(L)A_2\rangle$ for all $L\in\mathfrak l$ and $A_1,A_2\in\mathfrak a$.

Now, let $D_\mathfrak l$ be a derivation on $\mathfrak{l}$.
We call $(\rho,\mathfrak{a},\langle\cdot,\cdot\rangle_\mathfrak a,D_\mathfrak a)$ orthogonal $(\mathfrak l,D_\mathfrak l)$-module, if
$D_\mathfrak a$ is a skewsymmetric map on $\mathfrak a$ and $(\rho,\mathfrak a,\langle\cdot,\cdot\rangle_\mathfrak a)$ an orthogonal $\mathfrak l$-module satisfying $\rho(D_\mathfrak lL)=[D_\mathfrak a,\rho(L)]$ for all $L\in\mathfrak{l}$.
We will also write $(\mathfrak a,D_\mathfrak a)$ abbreviately for an orthogonal $(\mathfrak l,D_\mathfrak l)$-module.

Moreover, if $\rho$ is the trivial representation of $\mathfrak l$ on $\mathfrak a$, then we will call $(\mathfrak{a},\langle\cdot,\cdot\rangle_\mathfrak a,D_\mathfrak a)$ trivial $(\mathfrak l,D_\mathfrak l)$-module.
\end{definition}

\begin{lemma}
Let $\mathfrak{l}$ be a Lie algebra, $D_\mathfrak{l}$ a derivation on $\mathfrak{l}$ and $(\rho,\mathfrak{a},\langle\cdot,\cdot\rangle_\mathfrak a,D_\mathfrak{a})$ an orthogonal $(\mathfrak{l},D_\mathfrak{l})$-module.
Then $(\rho,\mathfrak{a},\langle\cdot,\cdot\rangle_\mathfrak a,{D_\mathfrak{a}}_s)$ is an orthogonal $(\mathfrak{l},{D_\mathfrak{l}}_s)$-module,
where ${D_\mathfrak{l}}_s$ and ${D_\mathfrak{a}}_s$ denote the semisimple parts of the Jordan decomposition of $D_\mathfrak{l}$ and $D_\mathfrak a$.
\end{lemma}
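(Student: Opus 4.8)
The plan is to verify the four requirements in the definition of an orthogonal $(\mathfrak{l},{D_\mathfrak{l}}_s)$-module for the tuple $(\rho,\mathfrak{a},\langle\cdot,\cdot\rangle_\mathfrak a,{D_\mathfrak{a}}_s)$. Two of them cost nothing: the representation $\rho$ and the form $\langle\cdot,\cdot\rangle_\mathfrak a$ are left untouched, so $(\rho,\mathfrak{a},\langle\cdot,\cdot\rangle_\mathfrak a)$ remains an orthogonal $\mathfrak l$-module; and ${D_\mathfrak{l}}_s$ is again a derivation of $\mathfrak l$ by the same generalized-eigenspace computation carried out in the proof of Lemma~\ref{lm:33} (for that conclusion the inner product plays no role). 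What remains is to prove that ${D_\mathfrak{a}}_s$ is skewsymmetric on $\mathfrak a$ and that $\rho({D_\mathfrak{l}}_s L)=[{D_\mathfrak{a}}_s,\rho(L)]$ for all $L\in\mathfrak l$.

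Skewsymmetry of ${D_\mathfrak{a}}_s$ follows at once from Lemma~\ref{lm:33}: apply it to the abelian metric Lie algebra $(\mathfrak a,\langle\cdot,\cdot\rangle_\mathfrak a)$ equipped with the skewsymmetric derivation $D_\mathfrak a$ — on an abelian Lie algebra every linear map is a derivation — and conclude that ${D_\mathfrak{a}}_s$ is a skewsymmetric derivation of $\mathfrak a$, in particular a skewsymmetric map.

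For the compatibility relation I would read the hypothesis $\rho(D_\mathfrak l L)=[D_\mathfrak a,\rho(L)]$ as the operator identity $\rho\circ D_\mathfrak l=(\ad D_\mathfrak a)\circ\rho$ between linear maps $\mathfrak l\to\Hom(\mathfrak a)$, where $\ad D_\mathfrak a$ denotes the operator $X\mapsto[D_\mathfrak a,X]$ on $\Hom(\mathfrak a)$. Equivalently, the graph $\Gamma=\{(L,\rho(L))\mid L\in\mathfrak l\}$ is an invariant subspace of $T:=D_\mathfrak l\oplus\ad D_\mathfrak a$ on $\mathfrak l\oplus\Hom(\mathfrak a)$. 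Since the semisimple part $T_s$ of the Jordan decomposition is a polynomial in $T$, the subspace $\Gamma$ is $T_s$-invariant as well. Now $T_s={D_\mathfrak{l}}_s\oplus(\ad D_\mathfrak a)_s$, and the standard compatibility of $\ad$ with the Jordan decomposition gives $(\ad D_\mathfrak a)_s=\ad({D_\mathfrak{a}}_s)$: writing $D_\mathfrak a={D_\mathfrak{a}}_s+{D_\mathfrak{a}}_n$, the operator $\ad({D_\mathfrak{a}}_s)$ is semisimple, $\ad({D_\mathfrak{a}}_n)$ is nilpotent, and the two commute because $[{D_\mathfrak{a}}_s,{D_\mathfrak{a}}_n]=0$. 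Evaluating $T_s$-invariance of $\Gamma$ on the vector $(L,\rho(L))$ then yields $({D_\mathfrak{l}}_s L,[{D_\mathfrak{a}}_s,\rho(L)])\in\Gamma$, that is, $\rho({D_\mathfrak{l}}_s L)=[{D_\mathfrak{a}}_s,\rho(L)]$.

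Over $\mathds R$ this reasoning is legitimate as it stands because the Jordan parts of a real operator are again real polynomials in it; alternatively one may complexify everything first, as in Lemma~\ref{lm:33}, and the argument is unchanged. I expect the compatibility relation to be the only step that needs an argument, and its crux is the pair of standard facts just used: that invariant subspaces of an operator are invariant under its semisimple part, and that $\ad$ commutes with passing to the semisimple part.
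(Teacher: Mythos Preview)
Your proof is correct. The preliminary reductions---that ${D_\mathfrak{l}}_s$ is again a derivation and that ${D_\mathfrak{a}}_s$ is skewsymmetric---match the paper's, which likewise appeals to Lemma~\ref{lm:33}. Where you diverge is in the key compatibility identity $\rho({D_\mathfrak{l}}_s L)=[{D_\mathfrak{a}}_s,\rho(L)]$. The paper proves this by an explicit generalized-eigenspace computation after complexification, in direct parallel to the proof of Lemma~\ref{lm:33}: if $L$ lies in the generalized $\lambda$-eigenspace of $D_\mathfrak l$ and $A$ in the generalized $\mu$-eigenspace of $D_\mathfrak a$, then $\rho(L)A$ lies in the generalized $(\lambda+\mu)$-eigenspace of $D_\mathfrak a$, and the identity follows by evaluating both sides on such vectors. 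Your graph argument packages the same content more abstractly: the intertwining relation says the graph of $\rho$ is $T$-invariant for $T=D_\mathfrak l\oplus\ad D_\mathfrak a$, hence $T_s$-invariant since $T_s$ is a polynomial in $T$, and $T_s={D_\mathfrak l}_s\oplus\ad({D_\mathfrak a}_s)$ by uniqueness of the Jordan decomposition. Your route is cleaner and exposes why the result is automatic---it is the general principle that intertwiners of operators also intertwine their Jordan parts---whereas the paper's hands-on computation has the virtue of staying in step with the surrounding proofs and making the eigenvalue bookkeeping visible for later use.
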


\begin{proof}
Because of Lemma \ref{lm:33} it remains to show that 
\[\rho({D_\mathfrak l}_sL)=[{D_\mathfrak a}_s,\rho(L)]\]
holds for all $L\in\mathfrak{l}$.
Consider the complexification of $D_\mathfrak{a}$, $D_\mathfrak{l}$ and $\rho$ if $D_\mathfrak{a}$ or $D_\mathfrak{l}$ have nonreal eigenvectors.
Let $L\in\mathfrak{l}$ be a vector in the generalized eigenspace of $D_\mathfrak{l}$ for $\lambda$ and $A\in\mathfrak{a}$ in the generalized eigenspace of $D_\mathfrak{a}$ for $\mu$.
Similary to the proof of Lemma \ref{lm:33} we obtain
\[ (D_\mathfrak{a}-(\lambda+\mu)\operatorname{id})^k\rho(L)A=\sum_{i=0}^k {k \choose i} \rho((D_\mathfrak l-\lambda\operatorname{id})^{k-i})(D_\mathfrak a-\mu\operatorname{id})^iA. \]
Thus $(D_\mathfrak{a}-(\lambda+\mu)\operatorname{id})^k\rho(L)A$ vanishes for sufficiently large $k$.
Hence $\rho(L)A$ is a vector in the generalized eigenspace for $\lambda+\mu$.
Finally ${D_\mathfrak{a}}_s\rho(L)A=(\lambda+\mu)\rho(L)A$ and
\[{D_\mathfrak a}_s\rho(L)A-\rho(L){D_\mathfrak a}_sA=\lambda\rho(L)A=\rho({D_\mathfrak l}_sL)A.\]
\end{proof}

\begin{definition}
Let $\mathfrak l$ be a Lie algebra, $D_\mathfrak l$ a derivation of $\mathfrak l$ and $(\rho,\mathfrak{a},\langle\cdot,\cdot\rangle,D_\mathfrak a)$ an orthogonal $(\mathfrak l,D_\mathfrak l)$-module.
A quadratic extension of $(\mathfrak l,D_\mathfrak l)$ by $(\mathfrak a,D_\mathfrak a)$ is a quadrupel $(\mathfrak{g},\mathfrak{i},i,p)$, where
\begin{itemize}
\item $\mathfrak{g}=(\mathfrak{g},\langle\cdot,\cdot\rangle,D)$ is a metric Lie algebra with skewsymmetric derivation,
\item $\mathfrak{i}\subset\mathfrak{g}$ is an isotropic $D$-invariant ideal of $\mathfrak{g}$,
\item $i:(\mathfrak{a},D_\mathfrak a)\rightarrow(\mathfrak{g}/\mathfrak{i},\overline D)$ and $p:(\mathfrak{g}/\mathfrak{i},\overline D)\rightarrow (\mathfrak{l},D_\mathfrak l)$ are homomorphisms of Lie algebras with derivations such that
\[0\rightarrow\mathfrak a\xrightarrow{i}\mathfrak g/\mathfrak i\xrightarrow{p}\mathfrak l\rightarrow 0\]
is a short exact sequence of Lie algebras.
Here $\overline D$ denotes the skewsymmetric derivation of $\mathfrak{g}/\mathfrak{i}$ induced by $D$.
Moreover,
\begin{align}\label{eq:DefRho}
i(\rho(L)A)=[\tilde L,i(A)]\in i(\mathfrak a)
\end{align}
holds for all $\tilde L\in \mathfrak{g}/\mathfrak{i}$ satisfying $p(\tilde L)=L$.
Furthermore, $\im(i)=\mathfrak{i}^{\perp}/\mathfrak{i}$ and $i$ is an isometry onto $\mathfrak{i}^{\perp}/\mathfrak{i}$.
\end{itemize}
\end{definition}

\begin{lemma}
Let $(\mathfrak g,D;\mathfrak i,i,p)$ be a quadratic extension of $(\mathfrak l,D_\mathfrak l)$ by $(\mathfrak a,D_\mathfrak a)$.
Then $(\mathfrak g,D_s;\mathfrak i,i,p)$ is also a quadratic extension of $(\mathfrak l,{D_\mathfrak l}_s)$ by $(\mathfrak a,{D_\mathfrak a}_s)$.
Here $D_s$, ${D_\mathfrak l}_s$ and ${D_\mathfrak a}_s$ denote the semisimple parts of the Jordan decomposition of $D$, $D_\mathfrak{l}$ and $D_\mathfrak a$.
\end{lemma}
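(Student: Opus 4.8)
The plan is to run through the four conditions in the definition of a quadratic extension for the quadruple $(\mathfrak g,D_s;\mathfrak i,i,p)$, using the two previous lemmas to supply the metric and derivation data. By Lemma~\ref{lm:33}, $(\mathfrak g,\langle\cdot,\cdot\rangle,D_s)$ is a metric Lie algebra with skewsymmetric derivation, which is the first bullet. By the preceding Lemma, $(\rho,\mathfrak a,\langle\cdot,\cdot\rangle_\mathfrak a,{D_\mathfrak a}_s)$ is an orthogonal $(\mathfrak l,{D_\mathfrak l}_s)$-module, so the objects $(\mathfrak a,{D_\mathfrak a}_s)$ and $(\mathfrak l,{D_\mathfrak l}_s)$ appearing in the claim are legitimate; and since $\rho$, the bracket and the inner product are unchanged, the relation $(\ref{eq:DefRho})$, the identity $\im(i)=\mathfrak i^\perp/\mathfrak i$ and the isometry property of $i$ continue to hold verbatim. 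Hence only two points require attention: that $\mathfrak i$ is still an isotropic $D_s$-invariant ideal, and that $i$ and $p$ are homomorphisms of Lie algebras with derivations once the derivation of $\mathfrak g/\mathfrak i$ is the one induced by $D_s$.

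For the first of these, isotropy and the ideal property do not depend on the choice of derivation. For $D_s$-invariance of $\mathfrak i$ (and likewise of $\mathfrak i^\perp$), recall that over the perfect field $\mathds R$ the Jordan--Chevalley decomposition gives $D_s=q(D)$ for a polynomial $q$ with vanishing constant term; therefore every $D$-invariant subspace is automatically $D_s$-invariant. In particular $D_s$ descends to a derivation $\overline{D_s}$ of $\mathfrak g/\mathfrak i$, and the same applies to $D_n$, so that $\overline D=\overline{D_s}+\overline{D_n}$. A quotient of a semisimple (resp.\ nilpotent) operator by an invariant subspace is again semisimple (resp.\ nilpotent), and $\overline{D_s}$, $\overline{D_n}$ commute because $D_s$, $D_n$ do; by uniqueness of the Jordan decomposition, $\overline{D_s}=(\overline D)_s$. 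Thus the derivation of $\mathfrak g/\mathfrak i$ induced by $D_s$ is exactly the semisimple part of the derivation $\overline D$ occurring in the original extension.

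It then remains to check the intertwining relations. The subspace $\im(i)$ is $\overline D$-invariant (from $\overline D\circ i=i\circ D_\mathfrak a$), hence $(\overline D)_s$-invariant; restricting the decomposition of $\overline D$ to $\im(i)$ gives the Jordan decomposition of $\overline D|_{\im(i)}$, and transporting it through the isomorphism $i\colon\mathfrak a\to\im(i)$, which conjugates $D_\mathfrak a$ to $\overline D|_{\im(i)}$, yields $i\circ{D_\mathfrak a}_s=(\overline D)_s\circ i=\overline{D_s}\circ i$. Symmetrically, $\ker(p)=\im(i)$ is $(\overline D)_s$-invariant, $p$ identifies $\mathfrak l$ with $(\mathfrak g/\mathfrak i)/\ker(p)$ and conjugates $\overline D$ to $D_\mathfrak l$; the operator induced by $(\overline D)_s$ on this quotient is semisimple and, together with the nilpotent operator induced by $(\overline D)_n$, realizes the Jordan decomposition of $D_\mathfrak l$, so it equals ${D_\mathfrak l}_s$ and $p\circ\overline{D_s}={D_\mathfrak l}_s\circ p$. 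Hence $i$ and $p$ are homomorphisms of Lie algebras with derivations for the new derivations, and all conditions of a quadratic extension are met. The only genuinely non-formal input is the compatibility of the Jordan decomposition with invariant subspaces and with quotients by them; once that is in place — and once $D_s$ is recognized as a polynomial in $D$ — the remaining verification is pure bookkeeping, so I do not expect a serious obstacle here.
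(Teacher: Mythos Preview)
Your proof is correct and follows the same line as the paper's proof: both hinge on the compatibility of the Jordan decomposition with invariant subspaces and quotients to verify that $\mathfrak i$ is $D_s$-invariant and that $i$ and $p$ intertwine the semisimple parts. The paper's version is much terser---it simply asserts that every $D$-invariant subspace is $D_s$-invariant and declares the rest ``not hard to see''---whereas you spell out in full why $\overline{D_s}=(\overline D)_s$ and how the intertwining relations transfer via uniqueness of the Jordan decomposition, which is exactly the content being elided.
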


\begin{proof}
It is well known that every $D$-invariant subspace is also invariant under the semisimple part $D_s$ of the Jordan decomposition of $D$.
So it is not hard to see that $i:(\mathfrak{a},D_\mathfrak a)\rightarrow(\mathfrak{g}/\mathfrak{i},\overline D)$ and $p:(\mathfrak{g}/\mathfrak{i},\overline D)\rightarrow (\mathfrak{l},D_\mathfrak l)$ are homomorphisms of Lie algebras with corresponding semisimple derivations.
\end{proof}

Let $\mathfrak i$ be an isotropic $D$-invariant ideal of a metric Lie algebra $\mathfrak g$ with bijective skewsymmetric derivation $D$ such that $\mathfrak i^\bot/\mathfrak i$ is abelian. Then the short exact sequence 
\begin{align}\label{eq:kanquadrErw}
0\rightarrow\mathfrak i^\bot/\mathfrak i\xrightarrow{i}\mathfrak g/\mathfrak i\xrightarrow{p}\mathfrak g/\mathfrak i^\bot\rightarrow 0
\end{align}
defines a quadratic extension of $\mathfrak g/\mathfrak i^\bot$ by $\mathfrak i^\bot/\mathfrak i$ with corresponding induced derivations.

\begin{remark}
This was already proved for metric Lie algebras without additional structure in \cite[Page 94]{KO06}. In \cite{KO08}, this statement was generalized for metric Lie algebras with additional structure, the so called $(\mathfrak{h},K)$-equivariant metric Lie algebras.
This also includes metric Lie algebras with semisimple skewsymmetric derivations in the notation of $(\mathds{R},\{e\})$-equivariant metric Lie algebras.
But it is not necessary that the derivations are semisimple to define a quadratic extension using the short exact sequence (\ref{eq:kanquadrErw}).
\end{remark}

We already know that nilpotent metric Lie algebras with bijective skewsymmetric derivations are the main structure to determine the isomorphism classes of metric symplectic Lie algebras.
So, let $\mathfrak{g}$ denote a nilpotent metric Lie algebra and $D$ a skewsymmetric, bijective derivation of $\mathfrak{g}$ for the rest of this section.

We are interested in quadratic extensions whose ideal is the canonical isotropic ideal
\begin{align}
\mathfrak i(\mathfrak g):=\sum_{k=0}^{n-1}\mathfrak g^{k+1}\cap{\mathfrak g^{k+1}}^\bot.
\end{align}
Here $\mathfrak g^1:=\mathfrak g,\dots,\mathfrak g^k:=[\mathfrak g,\mathfrak g^{k-1}]$ denotes the lower central series of $\mathfrak g$ and $n$ the smallest positive integer such that $\mathfrak g^n=\{0\}$.
This is the definition of the canonical isotropic ideal for nilpotent $\mathfrak g$ as it was already used in \cite{Ka07}.
The definition of $\mathfrak i(\mathfrak g)$ for a not necessary nilpotent $\mathfrak g$ is given in \cite[Definition 3.3]{KO06}.

This ideal is isotropic for every nilpotent metric Lie algebra $\mathfrak{g}$, it is $D$-invariant for every derivation $D$ of $\mathfrak{g}$ and moreover $\mathfrak{i}^\bot/\mathfrak{i}$ is abelian \cite[Lemma 3.4 (d)]{KO06} \cite[Proposition 2.7]{KO08}.
Thus we will call this ideal $\mathfrak i(\mathfrak g)$ the canonical isotropic ideal.
Furthermore, it is not hard to prove that every isometry $F:\mathfrak g_1\rightarrow \mathfrak g_2$ of nilpotent metric Lie algebras satisfies $F(\mathfrak i(\mathfrak g_1))=\mathfrak i(\mathfrak g_2)$.

\begin{lemma}(\cite{Ka07})
Let $\mathfrak{g}$ be a nilpotent metric Lie algebra and $\mathfrak i(\mathfrak g)$ the canonical isotropic ideal.
If $(\mathfrak{g},\mathfrak{i}(\mathfrak g),i,p)$ is a quadratic extension of $(\mathfrak{l},D_\mathfrak{l})$ by $(\mathfrak{a},D_\mathfrak{a})$,
then $(\mathfrak{a},D_\mathfrak{a})$ is a trivial $(\mathfrak{l},D_\mathfrak{l})$-module, i. e. $\rho$ is trivial.
\end{lemma}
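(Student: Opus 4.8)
The plan is to unwind what the representation $\rho$ is in a quadratic extension and thereby reduce the assertion to a purely metric property of the canonical isotropic ideal, namely $[\mathfrak g,\mathfrak i(\mathfrak g)^\perp]\subseteq\mathfrak i(\mathfrak g)$, which belongs to the basic structure theory of $\mathfrak i(\mathfrak g)$ developed in \cite{KO06} (and, in the nilpotent case, in \cite{Ka07}).

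For the reduction: equation (\ref{eq:DefRho}) says $i(\rho(L)A)=[\tilde L,i(A)]$ for every $\tilde L\in\mathfrak g/\mathfrak i$ with $p(\tilde L)=L$. Since $p$ is surjective and $\im(i)=\mathfrak i^\perp/\mathfrak i$, and since $\mathfrak i^\perp/\mathfrak i$ is already abelian, letting $L$, $\tilde L$ and $A$ range shows that $\rho$ is trivial if and only if $[\,\mathfrak g/\mathfrak i,\ \mathfrak i^\perp/\mathfrak i\,]=0$, i.e. if and only if $[\mathfrak g,\mathfrak i(\mathfrak g)^\perp]\subseteq\mathfrak i(\mathfrak g)$. (The weaker inclusion $[\mathfrak i^\perp,\mathfrak i^\perp]\subseteq\mathfrak i$ is exactly what is needed for $\rho$ to be well defined, and is given.) Moreover, using ad-invariance of $\langle\cdot,\cdot\rangle$ together with $\mathfrak i(\mathfrak g)=(\mathfrak i(\mathfrak g)^\perp)^\perp$ and non-degeneracy, one sees that $[\mathfrak g,\mathfrak i(\mathfrak g)^\perp]\subseteq\mathfrak i(\mathfrak g)$ is in fact equivalent to $[\mathfrak i(\mathfrak g)^\perp,\mathfrak i(\mathfrak g)^\perp]=0$; so the lemma amounts to the statement that $\mathfrak i(\mathfrak g)^\perp$ is an abelian subalgebra of $\mathfrak g$, a statement involving only $\mathfrak g$ as a metric Lie algebra — the derivation plays no role here.

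To prove $[\mathfrak g,\mathfrak i(\mathfrak g)^\perp]\subseteq\mathfrak i(\mathfrak g)$ I would work with the two $\perp$-dual filtrations of $\mathfrak g$: the lower central series $\mathfrak g^\bullet$ and $\mathfrak h^k:=(\mathfrak g^k)^\perp$. Ad-invariance identifies $\mathfrak h^k$ with the $(k-1)$-st term of the upper central series, so $[\mathfrak g,\mathfrak h^k]\subseteq\mathfrak h^{k-1}$ and hence $[\mathfrak g^k,\mathfrak h^k]=0$; it also gives $\langle\mathfrak g^a,\mathfrak g^b\rangle=0$ whenever $a+b\ge n+1$, i.e. $\mathfrak g^a\subseteq\mathfrak h^{n+1-a}$. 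Writing $\mathfrak i(\mathfrak g)=\sum_k(\mathfrak g^k\cap\mathfrak h^k)$, hence $\mathfrak i(\mathfrak g)^\perp=\bigcap_k(\mathfrak g^k+\mathfrak h^k)$, one decomposes $Y,Z\in\mathfrak i(\mathfrak g)^\perp$ along $\mathfrak g^k+\mathfrak h^k$; because $[\mathfrak g^k,\mathfrak h^k]=0$ the cross terms vanish, leaving $[Y,Z]\in\mathfrak g^{2k}+\mathfrak h^{k-1}$. For $k=2$ this already gives $[Y,Z]\in\mathfrak g^4$, and since $\mathfrak h^2=\mathfrak z(\mathfrak g)\subseteq\mathfrak i(\mathfrak g)^\perp$ the $\mathfrak g^2$-parts can be taken inside $\mathfrak i(\mathfrak g)^\perp$, reducing the problem to $[\mathfrak i(\mathfrak g)^\perp,\mathfrak i(\mathfrak g)^\perp]=[\,\mathfrak g^2\cap\mathfrak i(\mathfrak g)^\perp,\ \mathfrak g^2\cap\mathfrak i(\mathfrak g)^\perp\,]$; one then pushes this commutator up the lower central series — at the $r$-th step using $\mathfrak g^r\cap\mathfrak h^{r+1}\subseteq\mathfrak i(\mathfrak g)^\perp$, together with $[\mathfrak g^r,\mathfrak h^r]=0$ and $\mathfrak g^a\subseteq\mathfrak h^{n+1-a}$ — which terminates because $\mathfrak g^n=\{0\}$.

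The step I expect to be the real obstacle is exactly this induction: the decompositions along $\mathfrak g^k+\mathfrak h^k$ are not canonical, so one must be careful both to keep the summands inside $\mathfrak i(\mathfrak g)^\perp$ and to show that the error terms produced at each step actually lie in $\mathfrak i(\mathfrak g)$ rather than merely in some $\mathfrak g^j$. This bookkeeping is precisely the delicate analysis of the canonical isotropic ideal carried out in \cite[Lemma 3.4]{KO06} (general case) and in \cite{Ka07} (nilpotent case); in the write-up I would invoke $[\mathfrak g,\mathfrak i(\mathfrak g)^\perp]\subseteq\mathfrak i(\mathfrak g)$ from there and then conclude by the reduction of the second paragraph.
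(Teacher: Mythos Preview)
Your reduction is correct and in fact more explicit than what the paper does: the paper gives no proof of this lemma at all and simply cites \cite{Ka07}, whereas you correctly identify that the content is the purely metric statement $[\mathfrak g,\mathfrak i(\mathfrak g)^\perp]\subseteq\mathfrak i(\mathfrak g)$ (equivalently, that $\mathfrak i(\mathfrak g)^\perp$ is abelian, which is strictly stronger than the already-cited fact that $\mathfrak i(\mathfrak g)^\perp/\mathfrak i(\mathfrak g)$ is abelian), and that the derivation plays no role. Since you and the paper both ultimately defer the hard inclusion to \cite{Ka07} (or \cite[Lemma 3.4]{KO06}), your proposal is in line with the paper's treatment; your filtration sketch is a reasonable outline of how that reference proceeds, and your caveat about the bookkeeping in the induction is well placed.
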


\begin{definition}
Let $(\mathfrak g,\langle\cdot,\cdot\rangle,D)$ be a nilpotent metric Lie algebra with bijective skewsymmetric derivation $D$.
Moreover, let $\mathfrak{l}$ be a nilpotent Lie algebra, $D_\mathfrak l$ a bijective derivation of $\mathfrak l$ and $(\mathfrak a,D_\mathfrak a)$ a trivial $(\mathfrak l,D_\mathfrak l)$-module, where $D_\mathfrak{a}$ is bijective.
A quadratic extension $(\mathfrak g,\mathfrak i,i,p)$ of $(\mathfrak{l},D_\mathfrak l)$ by $(\mathfrak a,D_\mathfrak a)$ is called balanced,
if $\mathfrak i$ is the canonical isotropic ideal $\mathfrak i=\mathfrak i(\mathfrak g)$.
\end{definition}

\begin{theorem}\label{thm:1}
Every nilpotent metric Lie algebra $\mathfrak g$ with skewsymmetric bijective derivation $D$ has the structure of a balanced quadratic extension.
I. e., there is a nilpotent Lie algebra $\mathfrak l$ with bijektive derivation $D_\mathfrak l$, an abelian metric Lie algebra $\mathfrak a$ with bijective skewsymmetric derivation $D_\mathfrak a$ considered as a trivial $(\mathfrak{l},D_\mathfrak{l})$-module and homomorphisms $i$ and $p$ of Lie algebras such that $(\mathfrak g,\mathfrak i(\mathfrak g),i,p)$ is a balanced quadratic extension of $(\mathfrak l,D_\mathfrak l)$ by $(\mathfrak a,D_\mathfrak a)$.
\end{theorem}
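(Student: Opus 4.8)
The plan is to realise $\mathfrak g$ as a quadratic extension built from its own canonical isotropic ideal. Concretely, set $\mathfrak i:=\mathfrak i(\mathfrak g)$, $\mathfrak l:=\mathfrak g/\mathfrak i^\bot$ and $\mathfrak a:=\mathfrak i^\bot/\mathfrak i$, equip $\mathfrak a$ with the bilinear form induced by $\langle\cdot,\cdot\rangle$, let $D_\mathfrak l$, $D_\mathfrak a$ and $\overline D$ be the maps induced by $D$ on $\mathfrak g/\mathfrak i^\bot$, $\mathfrak i^\bot/\mathfrak i$ and $\mathfrak g/\mathfrak i$, and take $i\colon\mathfrak a\to\mathfrak g/\mathfrak i$ and $p\colon\mathfrak g/\mathfrak i\to\mathfrak l$ to be the maps of the canonical short exact sequence (\ref{eq:kanquadrErw}). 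The first step is to recall the cited structural facts about $\mathfrak i(\mathfrak g)$: it is isotropic, it is $D$-invariant for every derivation $D$, and $\mathfrak i^\bot/\mathfrak i$ is abelian \cite[Lemma 3.4 (d)]{KO06}, \cite[Proposition 2.7]{KO08}. In particular $\mathfrak i^\bot$ is an ideal of $\mathfrak g$ by ad-invariance, so all the quotients above are Lie algebras and $\mathfrak l$ is nilpotent as a quotient of the nilpotent algebra $\mathfrak g$.

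With these hypotheses verified, the claim made earlier that the short exact sequence (\ref{eq:kanquadrErw}) defines a quadratic extension applies directly and yields that $(\mathfrak g,\mathfrak i(\mathfrak g),i,p)$ is a quadratic extension of $(\mathfrak l,D_\mathfrak l)$ by $(\mathfrak a,D_\mathfrak a)$, where the orthogonal $(\mathfrak l,D_\mathfrak l)$-module structure on $\mathfrak a$ is given by $i(\rho(L)A)=[\tilde L,i(A)]$ for any $\tilde L\in\mathfrak g/\mathfrak i$ with $p(\tilde L)=L$; this $\rho$ is well defined because $\ker p=i(\mathfrak a)$ is abelian in $\mathfrak g/\mathfrak i$ (since $[\mathfrak i^\bot,\mathfrak i^\bot]\subseteq\mathfrak i$) and $[\tilde L,i(A)]$ lands in $i(\mathfrak a)=\mathfrak i^\bot/\mathfrak i$ because $\mathfrak i^\bot$ is an ideal, and (\ref{eq:DefRho}) then holds by the very definition of $\rho$. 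For the theorem I would in addition spell out the few items not explicitly contained in that claim: the form induced on $\mathfrak a$ is non-degenerate because $\mathfrak i$ is isotropic, so $(\mathfrak a,\langle\cdot,\cdot\rangle_\mathfrak a)$ is an abelian metric Lie algebra; $D_\mathfrak l$ and $D_\mathfrak a$ are bijective because a bijective endomorphism restricted to a finite-dimensional invariant subspace stays bijective there and hence descends to a bijection of the quotient; and the module axioms, in particular the compatibility $\rho(D_\mathfrak l L)=[D_\mathfrak a,\rho(L)]$, follow from $\overline D$ being a derivation of $\mathfrak g/\mathfrak i$ intertwining with $D_\mathfrak l$ and $D_\mathfrak a$.

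It remains to upgrade \emph{quadratic extension} to \emph{balanced quadratic extension}. By construction the isotropic ideal involved is exactly the canonical one $\mathfrak i(\mathfrak g)$, so the only condition still to be met is that $\mathfrak a$ carry the \emph{trivial} $(\mathfrak l,D_\mathfrak l)$-module structure. This is precisely the content of the already stated Lemma of \cite{Ka07}: if $(\mathfrak g,\mathfrak i(\mathfrak g),i,p)$ is a quadratic extension, then its representation $\rho$ is necessarily trivial. Applying it to the extension just constructed, $(\mathfrak a,D_\mathfrak a)$ is a trivial $(\mathfrak l,D_\mathfrak l)$-module, $D_\mathfrak a$ is bijective and skewsymmetric, $\mathfrak l$ is nilpotent with bijective derivation $D_\mathfrak l$, and therefore $(\mathfrak g,\mathfrak i(\mathfrak g),i,p)$ is a balanced quadratic extension of $(\mathfrak l,D_\mathfrak l)$ by $(\mathfrak a,D_\mathfrak a)$, as claimed.

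I do not expect a deep obstacle here: the entire substance of the theorem sits in the imported results, namely that $\mathfrak i(\mathfrak g)$ is isotropic with $\mathfrak i^\bot/\mathfrak i$ abelian (from \cite{KO06,KO08}) and that the representation $\rho$ associated with the canonical ideal is trivial (from \cite{Ka07}); the present argument only assembles these with elementary bookkeeping about induced forms and derivations. The points that still deserve a careful line in the write-up are the bijectivity of $D_\mathfrak l$ and $D_\mathfrak a$, the nilpotency of $\mathfrak l$, and the well-definedness and derivation-compatibility of the module structure $\rho$ — each of which is short.
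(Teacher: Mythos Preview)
Your proposal is correct and follows exactly the paper's approach: take $\mathfrak i=\mathfrak i(\mathfrak g)$, invoke the cited properties (isotropic, $D$-invariant, $\mathfrak i^\bot/\mathfrak i$ abelian), apply the observation that the sequence (\ref{eq:kanquadrErw}) yields a quadratic extension, and conclude balancedness since $\mathfrak i=\mathfrak i(\mathfrak g)$ by construction. The paper's own proof is three sentences and omits the bookkeeping you spell out (bijectivity of $D_\mathfrak l$, $D_\mathfrak a$; nilpotency of $\mathfrak l$; triviality of $\rho$ via \cite{Ka07}); your added detail is harmless and arguably clearer, but there is no difference in strategy.
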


\begin{proof}
We choose $\mathfrak i=\mathfrak{i}(\mathfrak{g})$. This ideal is isotropic, $D$-invariant and $\mathfrak{i}(\mathfrak{g})^\bot/\mathfrak{i}(\mathfrak{g})$ is abelian.
Thus the sequence (\ref{eq:kanquadrErw}) defines a quadratic extension with corresponding derivations, which is balanced by definition.
\end{proof}

It is also possible that $(\mathfrak g,\mathfrak i,i,p)$ is a balanced quadratic extension of the trivial Lie algebra $\{0\}$ by $\mathfrak{a}$.
This means that $\mathfrak i(\mathfrak g)=\{0\}$, which is equivalent to $\mathfrak{g}$ is abelian.
Thus a non-abelian $\mathfrak g$ has the structure of a non-trivial balanced quadratic extension.

There is a natural equivalence relation on the set of quadratic extensions.
\begin{definition}
Two quadratic extensions $(\mathfrak g_j,D_j;\mathfrak i_j,i_j,p_j)$, $j=1,2$ of $(\mathfrak l,D_\mathfrak l)$ by $(\mathfrak a,D_\mathfrak a)$ are equivalent, if there is an isomorphism $F:(\mathfrak g_1,D_1)\rightarrow (\mathfrak g_2,D_2)$ of the metric Lie algebras with derivations such that $F(\mathfrak i_1)=\mathfrak i_2$ and the induced isomorphism $\bar F:\mathfrak g_1/\mathfrak i_1\rightarrow \mathfrak g_2/\mathfrak i_2$ satisfies
\[\bar F \circ i_1=i_2\quad\text{ and }\quad p_2\circ\bar F=p_1.\]
\end{definition}

We will determine this equivalence relation of quadratic extensions with the help of a certain cohomology class, which we will introduce in section \ref{MSLA:Kohom}.

There is a natural notion of the direct sum of quadratic extensions. That is, if
$(\mathfrak g_j,\mathfrak i_j,i_j,p_j)$, $j=1,2$ are quadratic extensions of $\mathfrak l_j$ by $\mathfrak a_j$, then 
$(\mathfrak g_1\oplus \mathfrak g_2,\mathfrak i_1\oplus \mathfrak i_2,i_1 \oplus i_2,p_1\oplus p_2)$ is a quadratic extension of $\mathfrak l_1\oplus\mathfrak l_2$ by $\mathfrak a_1\oplus\mathfrak a_2$.

We call a direct sum non-trivial, if both summands are different from the trivial quadratic extension $(\{0\},\{0\},0,0)$ of $\{0\}$ by $\{0\}$.
We call a quadratic extension decomposable, if it can be written as a non-trivial direct sum of two quadratic extensions.
A quadratic extension, which is equivalent to a decomposable one, is also decomposable.
Moreover, if a quadratic extension $(\mathfrak g,\mathfrak i,i,p)$ is decomposable, then the corresponding metric symplectic Lie algebra $\mathfrak g$ is decomposable as a metric symplectic Lie algebra.
Conversely, we have the following lemma from \cite{KO08} (see also \cite{KO06}), since it does not depend on the semisimplicity of the derivation.

\begin{lemma} (\cite{KO08}, see also \cite{KO06})
Let $(\mathfrak g,\mathfrak i,i,p)$ be a balanced quadratic extension of $\mathfrak l$ by $\mathfrak a$.
The quadratic extension $(\mathfrak g,\mathfrak i,i,p)$ is decomposable if and only if $\mathfrak g$ is decomposable as a metric symplectic Lie algebra.
\end{lemma}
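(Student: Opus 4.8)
The implication ``$(\mathfrak g,\mathfrak i,i,p)$ decomposable $\Rightarrow$ $\mathfrak g$ decomposable'' was already noted above: if $(\mathfrak g,\mathfrak i,i,p)=(\mathfrak g_1,\mathfrak i_1,i_1,p_1)\oplus(\mathfrak g_2,\mathfrak i_2,i_2,p_2)$ nontrivially, then each $\mathfrak g_j$ is nonzero (a summand with $\mathfrak g_j=\{0\}$ would be the trivial quadratic extension), each $\mathfrak g_j$ carries the bijective skewsymmetric derivation $D_j=D|_{\mathfrak g_j}$ (since $D=D_1\oplus D_2$ is bijective iff both $D_j$ are), hence is a metric symplectic Lie algebra by Lemma~\ref{lm:msla1}, and $\mathfrak g=\mathfrak g_1\oplus\mathfrak g_2$ is a nontrivial decomposition of metric symplectic Lie algebras. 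For the converse, assume $\mathfrak g=\mathfrak g_1\oplus\mathfrak g_2$ as metric symplectic Lie algebras with both summands nonzero. The decomposition is orthogonal for $\langle\cdot,\cdot\rangle$ and satisfies $\omega(\mathfrak g_1,\mathfrak g_2)=0$; since $\omega=\langle\cdot,D\cdot\rangle$ with $\langle\cdot,\cdot\rangle$ non-degenerate, $\langle\mathfrak g_1,D\mathfrak g_2\rangle=0$ gives $D\mathfrak g_2\subset\mathfrak g_1^\bot=\mathfrak g_2$ and symmetrically $D\mathfrak g_1\subset\mathfrak g_1$, so $D=D_1\oplus D_2$.

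The plan is to restrict every datum of the quadratic extension to the two summands, and the one point where the hypothesis \emph{balanced} enters is that the canonical isotropic ideal is compatible with orthogonal direct sums:
\[\mathfrak i(\mathfrak g_1\oplus\mathfrak g_2)=\mathfrak i(\mathfrak g_1)\oplus\mathfrak i(\mathfrak g_2).\]
This is immediate from the defining formula: the lower central series satisfies $(\mathfrak g_1\oplus\mathfrak g_2)^{k}=\mathfrak g_1^{k}\oplus\mathfrak g_2^{k}$, for the orthogonal direct sum one has $(\mathfrak u_1\oplus\mathfrak u_2)^\bot=\mathfrak u_1^{\bot_1}\oplus\mathfrak u_2^{\bot_2}$ and hence $(\mathfrak u_1\oplus\mathfrak u_2)\cap(\mathfrak v_1\oplus\mathfrak v_2)=(\mathfrak u_1\cap\mathfrak v_1)\oplus(\mathfrak u_2\cap\mathfrak v_2)$ for subspaces of the respective summands, so each term $\mathfrak g^{k+1}\cap(\mathfrak g^{k+1})^\bot$ splits, and summing over $k$ yields the claim (terms with $k+1$ above the nilpotency index of one summand contribute nothing from that side). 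Writing $\mathfrak i=\mathfrak i(\mathfrak g)$ we therefore get $\mathfrak i=\mathfrak i_1\oplus\mathfrak i_2$ with $\mathfrak i_j:=\mathfrak i(\mathfrak g_j)=\mathfrak i\cap\mathfrak g_j$ an isotropic, $D_j$-invariant ideal of $\mathfrak g_j$ with $\mathfrak i_j^{\bot_j}/\mathfrak i_j$ abelian; moreover $\mathfrak i^\bot=\mathfrak i_1^{\bot_1}\oplus\mathfrak i_2^{\bot_2}$ and $\mathfrak g/\mathfrak i=\mathfrak g_1/\mathfrak i_1\oplus\mathfrak g_2/\mathfrak i_2$, with $\overline D$ and the induced inner product both respecting this splitting.

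Next I would set $\mathfrak a_j:=i^{-1}(\mathfrak i_j^{\bot_j}/\mathfrak i_j)$, $\mathfrak l_j:=p(\mathfrak g_j/\mathfrak i_j)$, $i_j:=i|_{\mathfrak a_j}$ and $p_j:=p|_{\mathfrak g_j/\mathfrak i_j}$. Since $i$ is an isometry onto $\mathfrak i^\bot/\mathfrak i=\mathfrak i_1^{\bot_1}/\mathfrak i_1\oplus\mathfrak i_2^{\bot_2}/\mathfrak i_2$ and this splitting is orthogonal, $\mathfrak a=\mathfrak a_1\oplus\mathfrak a_2$ is an orthogonal direct sum of abelian metric Lie algebras, $D_\mathfrak a$ preserves each $\mathfrak a_j$, and triviality of the $\mathfrak l$-module $\mathfrak a$ — which holds because the extension is balanced — passes to each $\mathfrak a_j$. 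As $\ker p=i(\mathfrak a)$ restricts on $\mathfrak g_j/\mathfrak i_j$ to $i(\mathfrak a_j)$, a dimension count gives $\dim\mathfrak l_j=\dim\mathfrak g_j/\mathfrak i_j^{\bot_j}$, so the $\mathfrak l_j$ (ideals of $\mathfrak l$, being images of the ideals $\mathfrak g_j/\mathfrak i_j$ of $\mathfrak g/\mathfrak i$) satisfy $\mathfrak l=\mathfrak l_1\oplus\mathfrak l_2$, and $D_\mathfrak l$ preserves each $\mathfrak l_j$ since $p$ intertwines $\overline D$ and $D_\mathfrak l$. One then checks that $(\mathfrak g_j,\mathfrak i_j,i_j,p_j)$ is a quadratic extension of $(\mathfrak l_j,D_\mathfrak l|_{\mathfrak l_j})$ by $(\mathfrak a_j,D_\mathfrak a|_{\mathfrak a_j})$: the short exact sequence $0\to\mathfrak a_j\to\mathfrak g_j/\mathfrak i_j\to\mathfrak l_j\to 0$, the derivation-compatibility of $i_j,p_j$, and $\im(i_j)=\mathfrak i_j^{\bot_j}/\mathfrak i_j$ together with the isometry property are restrictions of the corresponding facts for $(\mathfrak g,\mathfrak i,i,p)$ (the isometry property using orthogonality of the splitting), while condition~\eqref{eq:DefRho} becomes $[\tilde L,i_j(A)]=0$ for $p_j(\tilde L)=L$, which follows from the same condition in $\mathfrak g/\mathfrak i$ since $\mathfrak g_j/\mathfrak i_j\subset\mathfrak g/\mathfrak i$ and $\rho=0$. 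By construction $(\mathfrak g,\mathfrak i,i,p)=(\mathfrak g_1,\mathfrak i_1,i_1,p_1)\oplus(\mathfrak g_2,\mathfrak i_2,i_2,p_2)$, and since $\mathfrak g_1,\mathfrak g_2\neq\{0\}$ this is a nontrivial direct sum; hence the quadratic extension is decomposable.

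The only real work is the bookkeeping: keeping the four decompositions $\mathfrak g=\mathfrak g_1\oplus\mathfrak g_2$, $\mathfrak i=\mathfrak i_1\oplus\mathfrak i_2$, $\mathfrak a=\mathfrak a_1\oplus\mathfrak a_2$, $\mathfrak l=\mathfrak l_1\oplus\mathfrak l_2$ mutually compatible, and orthogonal where needed, while verifying the quadratic-extension axioms summand by summand. The conceptual content is the single observation that $\mathfrak i(\cdot)$ respects direct sums — and this is exactly why ``balanced'' cannot be dropped: for a non-canonical isotropic ideal $\mathfrak i$ there is no reason for $\mathfrak i$ to be adapted to a decomposition of $\mathfrak g$, so a decomposable metric symplectic Lie algebra may well underlie an indecomposable (non-balanced) quadratic extension.
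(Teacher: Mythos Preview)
Your argument is correct. The paper itself does not give a proof of this lemma; it simply cites \cite{KO08} (and \cite{KO06}) and remarks that the result carries over because its proof does not use semisimplicity of the derivation. What you have written is essentially the standard argument from those references, specialised to the present setting: the only substantive point is that the canonical isotropic ideal satisfies $\mathfrak i(\mathfrak g_1\oplus\mathfrak g_2)=\mathfrak i(\mathfrak g_1)\oplus\mathfrak i(\mathfrak g_2)$, after which all the data $(\mathfrak i,i,p)$ restrict compatibly to the two summands. Your observation that $D=D_1\oplus D_2$ follows automatically from $\omega=\langle\cdot,D\cdot\rangle$ together with the orthogonality of the decomposition is exactly the extra step needed beyond the purely metric case treated in \cite{KO06}, and confirms the paper's claim that semisimplicity plays no role. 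The closing remark on why the balanced hypothesis is essential is accurate and worth keeping.
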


\section{Quadratic cohomology}\label{MSLA:Kohom}
The task of this section is to introduce the cocycle, we will use to describe the quadratic extensions. Afterwards, we define the quadratic cohomology by using a certain group action on the set of cocycles.
\\\quad\\
Let $\rho:\mathfrak{l}\rightarrow\Hom(\mathfrak{a})$ be a representation of the Lie algebra $\mathfrak{l}$ on the vector space $\mathfrak{a}$.
Let $C^p(\mathfrak{l},\mathfrak{a})=\Hom(\bigwedge^p\mathfrak{l},\mathfrak{a})$ denote the space of alternating $p$-linear maps of $\mathfrak l$ with values in $\mathfrak a$ and
$(C^*(\mathfrak{l},\mathfrak{a}),\rd)$ the standard Lie algebra cochain complex, where $\rd:C^p(\mathfrak{l},\mathfrak{a})\rightarrow C^{p+1}(\mathfrak{l},\mathfrak{a})$ is defined by 
\begin{align*}
\rd\tau(L_1,\dots L_{p+1}) =&\sum_{i=1}^{p+1}(-1)^{i+1}\rho(L_i)\tau(L_1,\dots,\hat{L_i},\dots,L_{p+1})\\
&+\sum_{i<j}(-1)^{i+j}\tau([L_i,L_j],L_1,\dots,\hat{L_i},\dots,\hat{L_j},\dots,L_{p+1})
\end{align*}
for all $\tau\in C^p(\mathfrak{l},\mathfrak{a})$.
Moreover, let $Z^p(\mathfrak{l},\mathfrak{a})$ and $B^p(\mathfrak{l},\mathfrak{a})$ denote the groups of cocycles and coboundaries of $C^p(\mathfrak{l},\mathfrak{a})$.
The cochain complex of $\mathfrak{l}$ with the trivial representation on $\mathds R$ is denoted by $C^*(\mathfrak l)$.

\begin{definition}%(Wedgeklammer)
Let $(\rho,\mathfrak{a},\langle\cdot,\cdot\rangle)$ be an orthogonal $\mathfrak{l}$-module.
We define a bilinear multiplication $\langle\cdot\wedge\cdot\rangle:C^p(\mathfrak l,\mathfrak a)\times C^q(\mathfrak l,\mathfrak a)\rightarrow C^{p+q}(\mathfrak l,\mathds R)$ by \[\langle\alpha\wedge\tau\rangle(L_1,\dots,L_{p+q})=\sum_{[\sigma]\in\mathcal{S}_{p+q}/\mathcal{S}_{p}\times\mathcal{S}_{q}}sgn(\sigma)\langle\alpha(L_{\sigma(1)},\dots,L_{\sigma(p)}),\tau(L_{\sigma(p+1)},\dots,L_{\sigma(p+q)})\rangle.\] 
Here $\mathcal S_k$ denotes the symmetric group of $k$ letters.
%\[\langle\alpha\wedge\tau\rangle(L_1,\dots,L_{p+q})=\sum_{\sigma(1)<\dots<\sigma(p),\sigma(p+1)<\dots<\sigma(p+q)}sgn(\sigma)\langle\alpha(L_{\sigma(1)},\dots,L_{\sigma(p)}),\tau(L_{\sigma(p+1)},\dots,L_{\sigma(p+q)})\rangle.\] 
\end{definition}

\begin{lemma} (\cite[page 90]{KO06})
Assume $\alpha\in C^p(\mathfrak l,\mathfrak a)$ and $\tau\in C^q(\mathfrak l,\mathfrak a)$. Then
\begin{align}
\rd\langle\alpha\wedge\tau\rangle&=\langle \rd\alpha\wedge\tau\rangle+(-1)^p\langle\alpha\wedge \rd\tau\rangle \text{ and }\label{dWedge}\\
\langle\alpha\wedge\tau\rangle&=(-1)^{pq}\langle\tau\wedge\alpha\rangle.\label{AltWedge}
\end{align}
\end{lemma}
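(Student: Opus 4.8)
The plan is to prove the two identities separately. Equation (\ref{AltWedge}) is purely combinatorial and uses only the symmetry of the form $\langle\cdot,\cdot\rangle_{\mathfrak a}$ together with the fact that $\alpha$ and $\tau$ are alternating, so that each summand in the definition of $\langle\alpha\wedge\tau\rangle$ depends only on the coset $[\sigma]$. Equation (\ref{dWedge}) is the substantial one; it is the place where the $\mathfrak l$-invariance $\langle\rho(L)A_1,A_2\rangle=-\langle A_1,\rho(L)A_2\rangle$ of the orthogonal module structure must be used, and it amounts to a Leibniz rule for the contraction of the cup product with an invariant form.

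For (\ref{AltWedge}) I would fix $L_1,\dots,L_{p+q}$ and let $c\in\mathcal S_{p+q}$ be the block transposition that moves the last $q$ letters in front of the first $p$, so that $\operatorname{sgn}(c)=(-1)^{pq}$. Right translation $\sigma\mapsto\sigma c$ induces a bijection from the coset space $\mathcal S_{p+q}/(\mathcal S_p\times\mathcal S_q)$ indexing the sum for $\langle\alpha\wedge\tau\rangle$ onto the coset space $\mathcal S_{p+q}/(\mathcal S_q\times\mathcal S_p)$ indexing the sum for $\langle\tau\wedge\alpha\rangle$. Under this bijection the summand $\operatorname{sgn}(\sigma)\langle\alpha(L_{\sigma(1)},\dots),\tau(L_{\sigma(p+1)},\dots)\rangle$ is carried to $\operatorname{sgn}(\sigma c)\langle\tau(\dots),\alpha(\dots)\rangle$, the two inner-product values agreeing by symmetry of $\langle\cdot,\cdot\rangle_{\mathfrak a}$ and the signs differing by $\operatorname{sgn}(c)=(-1)^{pq}$. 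Summing over the bijection yields (\ref{AltWedge}).

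For (\ref{dWedge}) I would evaluate both sides on $L_1,\dots,L_{p+q+1}$. On the left the coefficient module of $\langle\alpha\wedge\tau\rangle$ is $\mathds R$ with the trivial action, so the $\rho$-part of the cochain differential drops out and
\[\rd\langle\alpha\wedge\tau\rangle(L_1,\dots,L_{p+q+1})=\sum_{i<j}(-1)^{i+j}\langle\alpha\wedge\tau\rangle([L_i,L_j],L_1,\dots,\widehat{L_i},\dots,\widehat{L_j},\dots,L_{p+q+1}).\]
On the right I would expand $\rd\alpha$ and $\rd\tau$ into their $\rho$-part and their bracket-part. The $\rho$-part of $\langle\rd\alpha\wedge\tau\rangle$ consists of terms $\operatorname{sgn}(\sigma)\langle\rho(L_{\sigma(i)})\alpha(\dots),\tau(\dots)\rangle$ and the $\rho$-part of $(-1)^p\langle\alpha\wedge\rd\tau\rangle$ of terms $(-1)^p\operatorname{sgn}(\sigma')\langle\alpha(\dots),\rho(L_{\sigma'(j)})\tau(\dots)\rangle$; these match up, after a reindexing of the shuffles that records the position of the distinguished letter, and cancel pairwise thanks to $\langle\rho(L)A_1,A_2\rangle=-\langle A_1,\rho(L)A_2\rangle$, the factor $(-1)^p$ being exactly what makes the residual signs agree. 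The two bracket-parts are then reorganised — again by a reindexing of the shuffle cosets, according to whether the letter $[L_i,L_j]$ is distributed into the first or the second block — into the displayed expression for $\rd\langle\alpha\wedge\tau\rangle$, which finishes the proof.

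The main obstacle is the sign and index bookkeeping in (\ref{dWedge}): one has to track precisely how deleting $L_i,L_j$ and inserting $[L_i,L_j]$ interacts with the decomposition of a $(p+q)$-shuffle into a $p$-part and a $q$-part, and check that the leftover signs in the $\rho$-cancellation and in the bracket-reassembly come out correctly. This is routine but error-prone. A cleaner, more conceptual route, which I would use at least as a cross-check, is to note that skewsymmetry of $\rho$ says exactly that $\langle\cdot,\cdot\rangle_{\mathfrak a}\colon\mathfrak a\otimes\mathfrak a\to\mathds R$ is a homomorphism of $\mathfrak l$-modules with $\mathds R$ carrying the trivial action; hence $\langle\cdot\wedge\cdot\rangle$ is the composition of the standard cup product $C^p(\mathfrak l,\mathfrak a)\times C^q(\mathfrak l,\mathfrak a)\to C^{p+q}(\mathfrak l,\mathfrak a\otimes\mathfrak a)$ with the map $C^{p+q}(\mathfrak l,\mathfrak a\otimes\mathfrak a)\to C^{p+q}(\mathfrak l)$ induced by that homomorphism, and both (\ref{dWedge}) and (\ref{AltWedge}) then follow from the well-known graded Leibniz rule and graded commutativity of the cup product together with functoriality in the coefficient module.
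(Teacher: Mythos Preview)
Your proposal is correct. The paper itself does not give a proof of this lemma; it simply cites \cite[page~90]{KO06} and moves on, so there is no in-paper argument to compare against. Both of your approaches---the direct shuffle bookkeeping and the conceptual observation that $\langle\cdot,\cdot\rangle_{\mathfrak a}\colon\mathfrak a\otimes\mathfrak a\to\mathds R$ is an $\mathfrak l$-module map, whence $\langle\cdot\wedge\cdot\rangle$ is the cup product followed by a coefficient change and inherits the graded Leibniz rule and graded commutativity---are standard and sound; the second one is in fact the argument indicated in \cite{KO06}.
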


We consider the pairs $(\mathfrak l,D_\mathfrak l,\mathfrak a)$ of Lie algebras  with derivations $(\mathfrak l,D_\mathfrak l)$ and orthogonal $(\mathfrak l,D_\mathfrak l)$-modules $(\mathfrak a,D_\mathfrak a)$.
This pairs form a category, whose morphisms are pairs $(S,U)$ containing an 
homomorphism $S:\mathfrak l_1 \rightarrow \mathfrak l_2$ of Lie algebras and an isometric embedding $U:\mathfrak a_2 \rightarrow \mathfrak a_1$ satisfying $SD_{\mathfrak l_1}=D_{\mathfrak l_2}S$, $UD_{\mathfrak a_2}=D_{\mathfrak a_1}U$ and $U\circ\rho_2(SL)=\rho_1(L)\circ U$ for all $L\in\mathfrak{l}_1$.

We will denote the morphisms of this category by morphisms of pairs.

\begin{remark}
If $(S,U)$ is a morphism from $(\mathfrak l_1,D_{\mathfrak{l}_1},\mathfrak{a}_1)$ to $(\mathfrak l_2,D_{\mathfrak{l}_2},\mathfrak{a}_2)$, so it is a morphism from $(\mathfrak l_1,{D_{\mathfrak{l}_1}}_s,\mathfrak{a}_1)$ to $(\mathfrak l_2,{D_{\mathfrak l_2}}_s,\mathfrak a_2)$,
where ${D_{\mathfrak l_i}}_s$ denotes the semisimple part of the Jordan decomposition of $D_{\mathfrak l_i}$ and $\mathfrak a_i=(\mathfrak a_i,{D_{\mathfrak a_i}}_s)$
the orthogonal $(\mathfrak l,{D_{\mathfrak{l}_i}}_s)$-module with the semisimple part ${D_{\mathfrak a_i}}_s$ of the Jordan decomposition of $D_{\mathfrak a_i}$ for $i=1,2$.
\end{remark}

\begin{remark}
The category of pairs of Lie algebras and orthogonal modules was already introduced in \cite{KO06}.
The morphisms of that category were also called morphisms of pairs.
In \cite{KO08}, morphisms of pairs were build, which also respect additional structure on the pairs of Lie algebras and orthogonal modules.
A morphism of pairs from $(\mathfrak l_1,{D_{\mathfrak{l}_1}}_s,\mathfrak{a}_1)$ to $(\mathfrak l_2,{D_{\mathfrak l_2}}_s,\mathfrak a_2)$ in the sense of our definition,
where ${D_{\mathfrak l_i}}_s$ is a semisimple derivation of $\mathfrak l_i$ and $\mathfrak a_i=(\mathfrak a_i,{D_{\mathfrak a_i}}_s)$ an orthogonal $(\mathfrak l_i,{D_{\mathfrak l_i}}_s)$-module with semisimple ${D_{\mathfrak a_i}}_s$ for $i=1,2$,
is exactly the special case of the definition of morphisms of pairs of $(\mathds R,\{e\})$-equivariant metric Lie algebras in \cite{KO08}.
\end{remark}

\begin{definition}
The direct sum of two pairs $(\mathfrak l_j,D_{\mathfrak l_j},\mathfrak a_j)$, $j=1,2$ is defined by
\[(\mathfrak l,D_\mathfrak l,\mathfrak a)=(\mathfrak l_1,D_{\mathfrak l_1},\mathfrak a_1)\oplus(\mathfrak l_2,D_{\mathfrak l_2},\mathfrak a_2):=(\mathfrak l_1\oplus \mathfrak l_2,D_{\mathfrak l_1}\oplus D_{\mathfrak l_2},\mathfrak a_1\oplus \mathfrak a_2),\]
where $\mathfrak a_1$ and $\mathfrak a_2$ are orthogonal to each other, $D_{\mathfrak a_1}\oplus D_{\mathfrak a_2}$ is the derivation on $\mathfrak a_1\oplus\mathfrak a_2$ and for $i\neq j$, $i,j=1,2$ the Lie algebra $\mathfrak l_i$ acts trivially on $\mathfrak a_j$.

We call a direct sum non-trivial, if both summands are different from the trivial pair $(0,0,0)$.
\end{definition}

Of course, if $(S,U):(\mathfrak{l}_1,D_{\mathfrak{l}_1},\mathfrak{a}_1)\rightarrow (\mathfrak{l}_2,D_{\mathfrak{l}_2},\mathfrak{a}_2)$ is an isomorphism of pairs and $(\mathfrak{l}_2,D_{\mathfrak{l}_2},\mathfrak{a}_2)$ a non-trivial direct sum of pairs, then $(\mathfrak{l}_1,D_{\mathfrak{l}_1},\mathfrak{a}_1)$ is also a non-trivial direct sum of pairs.

Let $(S,U):(\mathfrak{l}_1,D_{\mathfrak{l}_1},\mathfrak{a}_1)\rightarrow (\mathfrak{l}_2,D_{\mathfrak{l}_2},\mathfrak{a}_2)$ be a morphism of pairs. We define the following pull back maps
\[(S,U)^*: C^p(\mathfrak{l}_2,\mathfrak{a}_2)\rightarrow C^p(\mathfrak{l}_1,\mathfrak{a}_1),\quad (S,U)^*\alpha(L_1,\dots,L_p):=U\circ\alpha(S(L_1),\dots,S(L_p))\]
\[\text{and } (S,U)^*: C^p(\mathfrak{l}_2)\rightarrow C^p(\mathfrak{l}_1),\quad (S,U)^*\gamma(L_1,\dots,L_p):=\gamma(S(L_1),\dots,S(L_p)).\]

\begin{lemma} (\cite[page 92]{KO06})
The pull backs $(S,U)^*$ commute with the differential $\rd$ and we have 
\begin{align}\label{eq:SLangleRangle}
(S,U)^*\langle\alpha\wedge\tau\rangle=\langle(S,U)^*\alpha\wedge(S,U)^*\tau\rangle
\end{align}
for $\alpha\in C^p(\mathfrak l_2,\mathfrak a_2)$ and $\tau\in C^q(\mathfrak l_2,\mathfrak a_2)$.
\end{lemma}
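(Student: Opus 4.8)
The statement to prove is that the pull-back maps $(S,U)^*$ commute with the differential $\rd$ and are compatible with the wedge pairing $\langle\cdot\wedge\cdot\rangle$ in the sense of equation (\ref{eq:SLangleRangle}). Both are essentially direct verifications from the definitions, so the plan is to substitute and match terms.

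First I would check that $(S,U)^*$ commutes with $\rd$. Take $\alpha\in C^p(\mathfrak l_2,\mathfrak a_2)$ and $L_1,\dots,L_{p+1}\in\mathfrak l_1$. I would write out $\rd((S,U)^*\alpha)(L_1,\dots,L_{p+1})$ using the definition of $\rd$ on $C^*(\mathfrak l_1,\mathfrak a_1)$: the first sum produces terms $(-1)^{i+1}\rho_1(L_i)\,U\alpha(S L_1,\dots,\widehat{SL_i},\dots,SL_{p+1})$, and the defining property $U\circ\rho_2(SL_i)=\rho_1(L_i)\circ U$ of a morphism of pairs turns these into $(-1)^{i+1}U\rho_2(SL_i)\alpha(SL_1,\dots)$. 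The second sum produces terms $(-1)^{i+j}U\alpha([L_i,L_j]_{\mathfrak l_1},SL_1,\dots)$, and since $S$ is a Lie algebra homomorphism, $S[L_i,L_j]_{\mathfrak l_1}=[SL_i,SL_j]_{\mathfrak l_2}$, so these match the corresponding terms of $(S,U)^*(\rd\alpha)$. Comparing, $(S,U)^*(\rd\alpha)=\rd((S,U)^*\alpha)$. For the scalar complex $C^*(\mathfrak l_2)\to C^*(\mathfrak l_1)$ the same computation applies with $\rho$ replaced by the trivial representation and $U$ absent, using only that $S$ is a Lie homomorphism.

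Next I would verify (\ref{eq:SLangleRangle}). Evaluating the right-hand side at $L_1,\dots,L_{p+q}\in\mathfrak l_1$ gives, by the definition of $\langle\cdot\wedge\cdot\rangle$ on $C^*(\mathfrak l_1,\mathfrak a_1)$, a sum over $[\sigma]\in\mathcal S_{p+q}/(\mathcal S_p\times\mathcal S_q)$ of $\sign(\sigma)\langle U\alpha(SL_{\sigma(1)},\dots,SL_{\sigma(p)}),\,U\tau(SL_{\sigma(p+1)},\dots,SL_{\sigma(p+q)})\rangle_{\mathfrak a_1}$. Since $U:\mathfrak a_2\to\mathfrak a_1$ is an isometric embedding, $\langle U x,U y\rangle_{\mathfrak a_1}=\langle x,y\rangle_{\mathfrak a_2}$, so each summand equals $\sign(\sigma)\langle\alpha(SL_{\sigma(1)},\dots,SL_{\sigma(p)}),\tau(SL_{\sigma(p+1)},\dots,SL_{\sigma(p+q)})\rangle_{\mathfrak a_2}$; summing over the same index set this is exactly $\langle\alpha\wedge\tau\rangle(SL_1,\dots,SL_{p+q})=(S,U)^*\langle\alpha\wedge\tau\rangle(L_1,\dots,L_{p+q})$, which is the left-hand side.

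There is no real obstacle here; the only point requiring care is bookkeeping — making sure that the two structural properties of a morphism of pairs ($S$ a Lie homomorphism, $U$ isometric and intertwining the representations) are invoked at exactly the right places, and that the index set $\mathcal S_{p+q}/(\mathcal S_p\times\mathcal S_q)$ over which one sums is literally the same on both sides so that no reordering or sign issue arises. Since this lemma is cited from \cite[page 92]{KO06} and differs from that reference only by carrying along the commuting derivations $D_{\mathfrak l_i},D_{\mathfrak a_i}$ (which play no role in the formulas for $\rd$ or $\langle\cdot\wedge\cdot\rangle$), it suffices to remark that the computations are unchanged.
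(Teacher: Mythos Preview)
Your proposal is correct and is exactly the natural direct verification one would expect. The paper does not actually supply its own proof of this lemma; it simply cites \cite[page 92]{KO06}, so your argument in fact fills in what the paper leaves to the reference, using precisely the three structural properties of a morphism of pairs (Lie homomorphism $S$, intertwining relation $U\rho_2(S\cdot)=\rho_1(\cdot)U$, and isometry of $U$) at the right places.
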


Let $\mathfrak{l}$ be a Lie algebra, $D_\mathfrak l$ a derivation of $\mathfrak l$ and $(\mathfrak{a},D_\mathfrak a)$ an orthogonal $(\mathfrak{l},D_\mathfrak l)$-module. Then $(\e^{-tD_\mathfrak l},\e^{tD_\mathfrak a})$ is an isomorphism of pairs for every $t\in\mathds{R}$.
For $\alpha\in C^p(\mathfrak{l},\mathfrak{a})$ denote $D^\circ\alpha=\frac{\rd}{\rd t}(\e^{-tD_\mathfrak l},\e^{tD_\mathfrak a})^*\alpha \big|_{t=0}$. We obtain 
\[D^\circ\alpha(L_1,\dots,L_p)=D_\mathfrak a(\alpha(L_1,\dots,L_p))-\sum_{i=1}^p\alpha(L_1,\dots,D_\mathfrak l{L_i},\dots,L_p).\]

For $\gamma\in C^p(\mathfrak l)$ we get analogous
\[D^\circ\gamma(L_1,\dots,L_p)=-\sum_{i=1}^p\gamma(L_1,\dots,D_\mathfrak l{L_i},\dots,L_p).\]

\begin{lemma}
Suppose $\alpha\in C^p(\mathfrak l,\mathfrak a)$, $\tau\in C^q(\mathfrak l,\mathfrak a)$ and $\gamma\in C^p(\mathfrak l)$. Then we have
\begin{align}
D^\circ\langle\alpha\wedge\tau\rangle&=\langle D^\circ\alpha\wedge\tau\rangle+\langle\alpha\wedge D^\circ\tau\rangle,\label{DlWedge}\\
D^\circ \rd\gamma&=\rd D^\circ\gamma\label{dDlSigma},\\
D^\circ \rd\alpha&=\rd D^\circ\alpha\label{dDaTau}.
\end{align}
\end{lemma}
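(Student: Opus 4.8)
The plan is to prove the three identities \eqref{DlWedge}, \eqref{dDlSigma}, \eqref{dDaTau} by exploiting the fact that $D^\circ$ is the derivative at $t=0$ of the family of pull-back maps $(\e^{-tD_\mathfrak l},\e^{tD_\mathfrak a})^*$, each of which is the pull-back along an isomorphism of pairs. For the first identity \eqref{DlWedge}, I would start from \eqref{eq:SLangleRangle} applied to the isomorphism $(S_t,U_t):=(\e^{-tD_\mathfrak l},\e^{tD_\mathfrak a})$, giving $(S_t,U_t)^*\langle\alpha\wedge\tau\rangle=\langle(S_t,U_t)^*\alpha\wedge(S_t,U_t)^*\tau\rangle$ for all $t$. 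Differentiating both sides at $t=0$ and using the Leibniz rule for the bilinear map $\langle\cdot\wedge\cdot\rangle$ on the right yields exactly $D^\circ\langle\alpha\wedge\tau\rangle=\langle D^\circ\alpha\wedge\tau\rangle+\langle\alpha\wedge D^\circ\tau\rangle$, since at $t=0$ each factor $(S_0,U_0)^*$ is the identity. No sign appears because the two pull-backs are differentiated ``simultaneously'' rather than in a graded fashion, in contrast with \eqref{dWedge}.

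For \eqref{dDlSigma} and \eqref{dDaTau}, the same device works: by the Lemma on page~92 of \cite{KO06} (the one stating $(S,U)^*$ commutes with $\rd$), we have $(S_t,U_t)^*\rd\gamma=\rd(S_t,U_t)^*\gamma$ and $(S_t,U_t)^*\rd\alpha=\rd(S_t,U_t)^*\alpha$ for every $t$; differentiating at $t=0$ and pulling the (linear, $t$-independent) operator $\rd$ through the derivative gives $D^\circ\rd\gamma=\rd D^\circ\gamma$ and $D^\circ\rd\alpha=\rd D^\circ\alpha$. Alternatively, these two can be checked by a direct computation from the explicit formulas for $D^\circ$ and $\rd$ given just above the Lemma: one substitutes the formula for $D^\circ$ into $\rd$ and vice versa, and verifies that the terms match after using that $D_\mathfrak l$ is a derivation of $\mathfrak l$ (so that $D_\mathfrak l[L_i,L_j]=[D_\mathfrak l L_i,L_j]+[L_i,D_\mathfrak l L_j]$) and that $\rho(D_\mathfrak l L)=[D_\mathfrak a,\rho(L)]$ holds by the orthogonal $(\mathfrak l,D_\mathfrak l)$-module axiom; the $\rho$-compatibility is precisely what makes the $\rho(L_i)$-terms cancel correctly in the $C^p(\mathfrak l,\mathfrak a)$ case.

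I would present the differentiation argument as the main proof, as it is cleaner and reuses the already-established functoriality. The only point requiring a little care --- and the step I expect to be the genuine (if minor) obstacle --- is justifying the interchange of $\frac{\rd}{\rd t}\big|_{t=0}$ with the fixed linear maps $\rd$ and $\langle\cdot\wedge\cdot\rangle$: since everything takes place in the finite-dimensional vector spaces $C^p(\mathfrak l,\mathfrak a)$ and $C^p(\mathfrak l)$ and the families $t\mapsto(S_t,U_t)^*\alpha$ are (real-)analytic in $t$ (being built from matrix exponentials), this interchange is immediate, so the argument reduces to writing down the identities after differentiating. If one prefers a self-contained proof avoiding any appeal to smoothness of the family, the direct-computation alternative sketched above for \eqref{dDlSigma} and \eqref{dDaTau}, together with the corresponding bookkeeping for \eqref{DlWedge}, works equally well.
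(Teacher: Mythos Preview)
Your proposal is correct and follows exactly the approach the paper intends: the paper's proof consists of the single sentence ``This follows from the properties of $(S,U)^*$,'' and your differentiation of the identities $(S_t,U_t)^*\langle\alpha\wedge\tau\rangle=\langle(S_t,U_t)^*\alpha\wedge(S_t,U_t)^*\tau\rangle$ and $(S_t,U_t)^*\rd=\rd(S_t,U_t)^*$ at $t=0$ is precisely the argument this sentence summarizes.
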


\begin{proof}
This follows from the properties of $(S,U)^*$.
\end{proof}

\begin{remark} \label{rm:SD=DS}
The pull back maps and $D^\circ$ commute, i. e. $D^\circ_{1}(S,U)^*\gamma=(S,U)^*D^\circ_{2}\gamma$ and $D_1^\circ(S,U)^*\alpha=(S,U)^*D_2^\circ\alpha$.
\end{remark}

Now, we give a cohomology, which also respects the (not necessary semisimple) derivations.

Let ${D_\mathfrak l}_s$ and ${D_\mathfrak a}_s$ denote the semisimple parts of the Jordan decomposition of the derivations $D_\mathfrak l$ of $\mathfrak l$ and $D_\mathfrak a$ of $\mathfrak a$ for the rest of this work.
The nilpotent parts are denoted by ${D_\mathfrak l}_n$ and ${D_\mathfrak a}_n$.
Moreover, denote $D^\circ_s\alpha=\frac{\rd}{\rd t}(\e^{-t{D_\mathfrak l}_s},\e^{t{D_\mathfrak a}_s})^*\alpha \big|_{t=0}$.

\begin{definition}
Let $p$ be even. We set (exactly as in \cite{KO08})
\begin{align}
Z_{Q}^p(\mathfrak l,\phi_\mathfrak l,\mathfrak a):=\big\{(\alpha,\gamma)\in &C^p(\mathfrak l,\mathfrak a)\oplus C^{2p-1}(\mathfrak l)\mid \rd\alpha=0,\rd\gamma=\frac{1}{2}\langle\alpha\wedge\alpha\rangle,D_s^\circ\alpha=0,D_s^\circ\gamma=0\big\}.
\end{align}
it is not hard to see that $(\alpha,\gamma)\in Z^p_Q(\mathfrak l,\phi_\mathfrak l,\mathfrak a)$ is invariant under the morphisms of pairs
\begin{align}\label{eq:hKequiv}
(\e^{t{D_\mathfrak l}_s},\e^{-t{D_\mathfrak a}_s}),\quad t\in\mathds R.
\end{align}

Let $C^{p-1}_Q(\mathfrak l,\phi_\mathfrak l,\mathfrak a)\subset C^{p-1}(\mathfrak l,\mathfrak a)\oplus C^{2p-2}(\mathfrak l)$ denote the set of tuples $(\tau,\sigma)$, which are invariant under the morphisms of pairs (\ref{eq:hKequiv}). In our notation, this means that $(\tau,\sigma)$ satisfies 
\begin{align}
D^\circ_s\tau=0\quad\text{ and }\quad D^\circ_s\sigma=0.
\end{align}
\end{definition}

\begin{lemma}(\cite[page 13]{KO08}, see also \cite[Definition 1.1, Lemma 1.2]{KO06})
The set $C_Q^{p-1}(\mathfrak l,\phi_\mathfrak l,\mathfrak a)$ becomes a group with group multiplication
\begin{align}
(\tau_1,\sigma_1)(\tau_2,\sigma_2):=(\tau_1+\tau_2,\sigma_1+\sigma_2+\frac{1}{2}\langle\tau_1\wedge\tau_2\rangle).
\end{align}
Moreover, suppose $(\alpha,\gamma)\in C^p(\mathfrak l,\mathfrak a)\oplus C^{2p-1}(\mathfrak l)$ and $(\tau,\sigma)\in C_Q^{p-1}(\mathfrak l,\phi_\mathfrak l,\mathfrak a)$. Then
\begin{align}\label{Rechtswirkung1}
(\alpha,\gamma)(\tau,\sigma):=\big(\alpha+\rd\tau,\gamma+\rd\sigma+\langle(\alpha+\frac{1}{2}\rd\tau)\wedge\tau\rangle\big)
\end{align}
defines a right action of the group $C_Q^{p-1}(\mathfrak l,\phi_\mathfrak l,\mathfrak a)$ on $C^p(\mathfrak l,\mathfrak a)\oplus C^{2p-1}(\mathfrak l)$, which leaves $Z_{Q}^{p}(\mathfrak l,\phi_\mathfrak l,\mathfrak a)$ invariant.
\end{lemma}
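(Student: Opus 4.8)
The statement to be proved is the lemma asserting that $C_Q^{p-1}(\mathfrak l,\phi_\mathfrak l,\mathfrak a)$ is a group under the stated multiplication and that formula (\ref{Rechtswirkung1}) defines a right action leaving $Z_Q^p(\mathfrak l,\phi_\mathfrak l,\mathfrak a)$ invariant. The approach is entirely computational: everything reduces to the two bilinear identities (\ref{dWedge}) and (\ref{AltWedge}) for $\langle\cdot\wedge\cdot\rangle$ together with linearity of $\rd$ and $D^\circ_s$ and the compatibility identities (\ref{DlWedge}), (\ref{dDlSigma}), (\ref{dDaTau}) (with $D^\circ$ replaced by $D^\circ_s$, which is admissible since $D^\circ_s$ is the infinitesimal generator of the morphisms of pairs (\ref{eq:hKequiv})). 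Since these are the same identities used in \cite{KO06} and \cite{KO08}, the cleanest write-up is to verify the group and action axioms directly and then separately check that the $D^\circ_s$-invariance conditions are preserved.

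First I would check that $C_Q^{p-1}(\mathfrak l,\phi_\mathfrak l,\mathfrak a)$ is closed under the multiplication and forms a group. Given $(\tau_1,\sigma_1),(\tau_2,\sigma_2)$ with $D^\circ_s\tau_i=0$ and $D^\circ_s\sigma_i=0$, the product's first component $\tau_1+\tau_2$ clearly satisfies $D^\circ_s(\tau_1+\tau_2)=0$, and for the second component one uses (\ref{DlWedge}) in the form $D^\circ_s\langle\tau_1\wedge\tau_2\rangle=\langle D^\circ_s\tau_1\wedge\tau_2\rangle+\langle\tau_1\wedge D^\circ_s\tau_2\rangle=0$, so $D^\circ_s(\sigma_1+\sigma_2+\tfrac12\langle\tau_1\wedge\tau_2\rangle)=0$. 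Associativity is a bilinear bookkeeping check: expanding $\big((\tau_1,\sigma_1)(\tau_2,\sigma_2)\big)(\tau_3,\sigma_3)$ and $(\tau_1,\sigma_1)\big((\tau_2,\sigma_2)(\tau_3,\sigma_3)\big)$, the first components agree trivially and the second components differ only in terms of the form $\langle\tau_i\wedge\tau_j\rangle$; using bilinearity these match. The identity element is $(0,0)$, and the inverse of $(\tau,\sigma)$ is $(-\tau,-\sigma+\tfrac12\langle\tau\wedge\tau\rangle)$, which again lies in $C_Q^{p-1}$ by the invariance computation above.

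Next I would verify that (\ref{Rechtswirkung1}) is a right action. One must show $\big((\alpha,\gamma)(\tau_1,\sigma_1)\big)(\tau_2,\sigma_2)=(\alpha,\gamma)\big((\tau_1,\sigma_1)(\tau_2,\sigma_2)\big)$. The first components both equal $\alpha+\rd\tau_1+\rd\tau_2$ (using $\rd^2=0$ is not even needed here, just linearity of $\rd$). For the second components one expands both sides and collects the $\langle\,\cdot\wedge\,\cdot\rangle$-terms; the key cancellation uses (\ref{dWedge}) to rewrite $\langle\rd\tau_1\wedge\tau_2\rangle$-type terms and the symmetry (\ref{AltWedge}), noting that since $p-1$ may be odd these signs must be tracked carefully — this is the one place where the parity of $p$ (given even) enters, making $\langle\tau_i\wedge\tau_j\rangle$ symmetric or antisymmetric as needed. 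That $(\alpha,\gamma)(0,0)=(\alpha,\gamma)$ is immediate. This sign-bookkeeping step, though routine, is the main obstacle in the sense that it is the only place where an error is likely; I expect it to go through exactly as in \cite[Lemma 1.2]{KO06}.

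Finally I would check that the action leaves $Z_Q^p(\mathfrak l,\phi_\mathfrak l,\mathfrak a)$ invariant. Suppose $(\alpha,\gamma)\in Z_Q^p$ and $(\tau,\sigma)\in C_Q^{p-1}$. The new first component is $\alpha'=\alpha+\rd\tau$, so $\rd\alpha'=\rd\alpha+\rd^2\tau=0$, and $D^\circ_s\alpha'=D^\circ_s\alpha+D^\circ_s\rd\tau=0+\rd D^\circ_s\tau=0$ by (\ref{dDaTau}) applied to $D^\circ_s$. For the new second component $\gamma'=\gamma+\rd\sigma+\langle(\alpha+\tfrac12\rd\tau)\wedge\tau\rangle$, the condition $\rd\gamma'=\tfrac12\langle\alpha'\wedge\alpha'\rangle$ is verified by expanding the right side $\tfrac12\langle(\alpha+\rd\tau)\wedge(\alpha+\rd\tau)\rangle=\tfrac12\langle\alpha\wedge\alpha\rangle+\langle\alpha\wedge\rd\tau\rangle+\tfrac12\langle\rd\tau\wedge\rd\tau\rangle$ and the left side using (\ref{dWedge}): $\rd\langle(\alpha+\tfrac12\rd\tau)\wedge\tau\rangle=\langle\rd\alpha\wedge\tau\rangle+(-1)^p\langle(\alpha+\tfrac12\rd\tau)\wedge\rd\tau\rangle=\langle\alpha\wedge\rd\tau\rangle+\tfrac12\langle\rd\tau\wedge\rd\tau\rangle$ since $p$ is even and $\rd\alpha=0$; together with $\rd\rd\sigma=0$ and $\rd\gamma=\tfrac12\langle\alpha\wedge\alpha\rangle$ this gives the claim. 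The condition $D^\circ_s\gamma'=0$ follows from $D^\circ_s\gamma=0$, $D^\circ_s\rd\sigma=\rd D^\circ_s\sigma=0$ via (\ref{dDlSigma}), and $D^\circ_s\langle(\alpha+\tfrac12\rd\tau)\wedge\tau\rangle=\langle D^\circ_s(\alpha+\tfrac12\rd\tau)\wedge\tau\rangle+\langle(\alpha+\tfrac12\rd\tau)\wedge D^\circ_s\tau\rangle=0$ using (\ref{DlWedge}), $D^\circ_s\alpha=0$, $D^\circ_s\rd\tau=\rd D^\circ_s\tau=0$, and $D^\circ_s\tau=0$. This completes the verification.
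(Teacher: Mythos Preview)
Your proof is correct. The paper does not actually give a proof of this lemma; it is stated with a citation to \cite[page 13]{KO08} and \cite[Definition 1.1, Lemma 1.2]{KO06}, so there is nothing in the paper to compare your argument against beyond those references. Your direct computational verification using identities (\ref{dWedge}), (\ref{AltWedge}), (\ref{DlWedge}), (\ref{dDlSigma}), (\ref{dDaTau}) is exactly the approach taken in \cite{KO06}, and all the steps you outline go through as written. One small remark: since $p$ is even, $\langle\tau\wedge\tau\rangle=(-1)^{(p-1)^2}\langle\tau\wedge\tau\rangle=-\langle\tau\wedge\tau\rangle$, so $\langle\tau\wedge\tau\rangle=0$ and your inverse $(-\tau,-\sigma+\tfrac12\langle\tau\wedge\tau\rangle)$ simplifies to $(-\tau,-\sigma)$; this is harmless but worth noting.
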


Now, we set the $p$-th. cohomology set $H_Q^p(\mathfrak l,\phi_\mathfrak l,\mathfrak a)$ of $\mathfrak l$ with coefficients in $\mathfrak a$ as 
\[H_Q^p(\mathfrak l,\phi_\mathfrak l,\mathfrak a):=Z_Q^{p}(\mathfrak l,\phi_\mathfrak l,\mathfrak a)/C_Q^{p-1}(\mathfrak l,\phi_\mathfrak l,\mathfrak a).\]
For $(\alpha,\gamma)\in Z^p_Q(\mathfrak{l},\phi_\mathfrak l,\mathfrak{a})$ let $[\alpha,\gamma]$ denote the corresponding cohomology class.

\begin{remark}
The cohomology set $H_Q^p(\mathfrak l,\phi_\mathfrak l,\mathfrak a)$ was already studied in \cite{KO08}.
It was introduced to classify the equivalence classes of quadratic extensions of $(\mathfrak h,K)$-equivariant metric Lie algebras.
Here this cohomology set is a special case and describes the equivalence classes of metric Lie algebras with semisimple skewsymmetric derivations.
We shall detail this later.
\end{remark}

\begin{lemma}
Let $p$ be even and $(\delta,\epsilon),(\tau,\sigma)\in C^{p-1}_Q(\mathfrak{l},\phi_\mathfrak l,\mathfrak{a})$. Then
\begin{align}\label{Rechtswirkung2}
(\delta,\epsilon)(\tau,\sigma):=\big(\delta+D^\circ\tau,\epsilon+D^\circ\sigma+\langle(\delta+\frac{1}{2}D^\circ\tau)\wedge\tau\rangle\big)
\end{align}
defines a right action of $C_Q^{p-1}(\mathfrak l,\phi_\mathfrak l,\mathfrak a)$ on $C_Q^{p-1}(\mathfrak l,\phi_\mathfrak l,\mathfrak a)$.
\end{lemma}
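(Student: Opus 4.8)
The plan is to verify two things: that the right-hand side of (\ref{Rechtswirkung2}) again lies in $C^{p-1}_Q(\mathfrak{l},\phi_\mathfrak{l},\mathfrak{a})$, so that the formula is meaningful, and that it satisfies the axioms of a right action with respect to the group multiplication on $C^{p-1}_Q(\mathfrak{l},\phi_\mathfrak{l},\mathfrak{a})$ introduced in the preceding lemma. Both parts run parallel to the corresponding statements for (\ref{Rechtswirkung1}), with the exterior differential $\rd$ replaced by the linear operator $D^\circ$; the main (and only mildly delicate) obstacle is the sign bookkeeping in the second part, which is handled by (\ref{DlWedge}) and (\ref{AltWedge}), together with the one genuinely new input over the situation of (\ref{Rechtswirkung1}), namely the commutation $D^\circ D^\circ_s=D^\circ_s D^\circ$, which is what keeps the action inside $C^{p-1}_Q(\mathfrak{l},\phi_\mathfrak{l},\mathfrak{a})$.

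I first treat well-definedness. Since $D^\circ$ is degree-preserving, $D^\circ\tau\in C^{p-1}(\mathfrak{l},\mathfrak{a})$ and $D^\circ\sigma\in C^{2p-2}(\mathfrak{l})$, and $\langle(\delta+\tfrac12 D^\circ\tau)\wedge\tau\rangle\in C^{2p-2}(\mathfrak{l})$, so $(\delta,\epsilon)(\tau,\sigma)$ lies in $C^{p-1}(\mathfrak{l},\mathfrak{a})\oplus C^{2p-2}(\mathfrak{l})$; it remains to see that it is annihilated by $D^\circ_s$. For the commutation $D^\circ D^\circ_s=D^\circ_s D^\circ$ note that $D_\mathfrak{l}$ commutes with its semisimple Jordan part ${D_\mathfrak{l}}_s$ and $D_\mathfrak{a}$ with ${D_\mathfrak{a}}_s$, so the one-parameter groups $(\e^{-tD_\mathfrak{l}},\e^{tD_\mathfrak{a}})^*$ and $(\e^{-s{D_\mathfrak{l}}_s},\e^{s{D_\mathfrak{a}}_s})^*$ of isomorphisms of pairs commute, hence so do their infinitesimal generators $D^\circ$ and $D^\circ_s$. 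Therefore $D^\circ_s(\delta+D^\circ\tau)=D^\circ_s\delta+D^\circ(D^\circ_s\tau)=0$. For the second component we also use that (\ref{DlWedge}) holds verbatim with $D^\circ_s$ in place of $D^\circ$ — legitimate because $(\mathfrak{a},{D_\mathfrak{a}}_s)$ is an orthogonal $(\mathfrak{l},{D_\mathfrak{l}}_s)$-module — so that $D^\circ_s\langle(\delta+\tfrac12 D^\circ\tau)\wedge\tau\rangle=\langle D^\circ_s(\delta+\tfrac12 D^\circ\tau)\wedge\tau\rangle+\langle(\delta+\tfrac12 D^\circ\tau)\wedge D^\circ_s\tau\rangle=0$, using $D^\circ_s\delta=D^\circ_s\tau=0$ and $D^\circ_s D^\circ\tau=D^\circ D^\circ_s\tau=0$. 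Combined with $D^\circ_s\epsilon=0$ and $D^\circ_s D^\circ\sigma=D^\circ D^\circ_s\sigma=0$ this shows that $D^\circ_s$ also kills the second component, so $(\delta,\epsilon)(\tau,\sigma)\in C^{p-1}_Q(\mathfrak{l},\phi_\mathfrak{l},\mathfrak{a})$.

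For the action axioms, $(0,0)$ clearly acts as the identity. For the compatibility $\big((\delta,\epsilon)(\tau_1,\sigma_1)\big)(\tau_2,\sigma_2)=(\delta,\epsilon)\big((\tau_1,\sigma_1)(\tau_2,\sigma_2)\big)$, where on the right one uses the group law $(\tau_1,\sigma_1)(\tau_2,\sigma_2)=(\tau_1+\tau_2,\sigma_1+\sigma_2+\tfrac12\langle\tau_1\wedge\tau_2\rangle)$, the first components agree by linearity of $D^\circ$, both equalling $\delta+D^\circ(\tau_1+\tau_2)$. For the second components one expands both sides, cancels the common summand $\epsilon+D^\circ\sigma_1+D^\circ\sigma_2$, rewrites $\tfrac12 D^\circ\langle\tau_1\wedge\tau_2\rangle=\tfrac12\langle D^\circ\tau_1\wedge\tau_2\rangle+\tfrac12\langle\tau_1\wedge D^\circ\tau_2\rangle$ by (\ref{DlWedge}), and is left with an identity among bilinear terms of the form $\langle\beta\wedge\tau_i\rangle$ and $\langle\tau_1\wedge D^\circ\tau_2\rangle$. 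Since $p$ is even, $p-1$ is odd, so (\ref{AltWedge}) gives $\langle\alpha\wedge\beta\rangle=-\langle\beta\wedge\alpha\rangle$ for $\alpha,\beta\in C^{p-1}(\mathfrak{l},\mathfrak{a})$; applying this to $\langle\tau_1\wedge D^\circ\tau_2\rangle$, the asymmetric cross-terms cancel and both sides reduce to the same expression. This is formally the same computation as the verification that (\ref{Rechtswirkung1}) is a right action in \cite{KO08}: the sign $(-1)^{p-1}=-1$ occurring there in the Leibniz rule (\ref{dWedge}) for $\rd$ on $C^{p-1}(\mathfrak{l},\mathfrak{a})$ is compensated by the absence of a sign in (\ref{DlWedge}), so that $\rd$ and $D^\circ$ enter the bookkeeping in the same way.
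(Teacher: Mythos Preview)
Your proof is correct and follows the same approach as the paper's: you use the commutation $D^\circ_s D^\circ=D^\circ D^\circ_s$ together with the Leibniz-type rule (\ref{DlWedge}) (applied both for $D^\circ$ and for $D^\circ_s$) to check that the result stays in $C^{p-1}_Q(\mathfrak{l},\phi_\mathfrak{l},\mathfrak{a})$, and (\ref{DlWedge}) together with (\ref{AltWedge}) for the associativity. The paper's proof is simply a terser version of yours, recording only the two key inputs without spelling out the cancellation; your added justification for $D^\circ_s D^\circ=D^\circ D^\circ_s$ via the commuting one-parameter groups is a useful detail the paper leaves implicit.
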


\begin{proof}
Because of equation (\ref{DlWedge}) we have
\begin{align*}
(\delta,\epsilon)\big((\tau_1,\sigma_1)(\tau_2,\sigma_2)\big)=\big((\delta,\epsilon)(\tau_1,\sigma_1)\big)(\tau_2,\sigma_2).
\end{align*}
Since ${D_s}^\circ D^\circ=D^\circ{D_s}^\circ$, we obtain $D_s^\circ(\delta+D^\circ\tau)=0$ and \[D_s^\circ(\epsilon+D^\circ\sigma+\langle(\delta+\frac{1}{2}D^\circ\tau)\wedge\tau\rangle)=0\] for $(\delta,\epsilon),(\tau,\sigma)\in C^{p-1}_Q(\mathfrak{l},\phi_\mathfrak l,\mathfrak{a})$.
\end{proof}

\begin{definition}
Let $Z_{Q+}^p(\mathfrak l,D_\mathfrak l,\mathfrak a)$ denote the set of all $(\alpha,\gamma,\delta,\epsilon)\in Z^p_Q(\mathfrak l,\phi_\mathfrak l,\mathfrak a)\oplus C_Q^{p-1}(\mathfrak l,\phi_\mathfrak l,\mathfrak a)$ satisfying
$\rd\delta=D^\circ\alpha$ and $\rd\epsilon=D^\circ\gamma-\langle\alpha\wedge\delta\rangle$.
\end{definition}

\begin{lemma}
Let $p$ be even, $(\alpha,\gamma,\delta,\epsilon)\in Z^p_Q(\mathfrak l,\phi_\mathfrak l,\mathfrak a)\oplus C^{p-1}_Q(\mathfrak l,\phi_\mathfrak l,\mathfrak a)$ and $(\tau,\sigma)\in C_Q^{p-1}(\mathfrak l,\phi_\mathfrak l,\mathfrak a)$. Then
\begin{align}\label{Rechtswirkung}
(\alpha&,\gamma,\delta,\epsilon)(\tau,\sigma):=\big((\alpha,\gamma)(\tau,\sigma),(\delta,\epsilon)(\tau,\sigma)\big)\\
=&\big(\alpha+\rd\tau,\gamma+\rd\sigma+\langle(\alpha+\frac{1}{2}\rd\tau)\wedge\tau\rangle,\delta+D^\circ\tau,
\epsilon+D^\circ\sigma+\langle(\delta+\frac{1}{2}D^\circ\tau)\wedge\tau\rangle\big)\nonumber
\end{align}
defines a right action of $C_Q^{p-1}(\mathfrak l,\phi_\mathfrak l,\mathfrak a)$ on $Z^p_Q(\mathfrak l,\phi_\mathfrak l,\mathfrak a)\oplus C^{p-1}_Q(\mathfrak l,\phi_\mathfrak l,\mathfrak a)$, which leaves $Z_{Q+}^{p}(\mathfrak l,D_\mathfrak{l},\mathfrak a)$ invariant. 
\end{lemma}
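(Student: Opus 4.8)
The plan is to verify the claim in two stages, corresponding to the two assertions hidden in the statement: first, that \eqref{Rechtswirkung} really does define a right action on the larger space $Z^p_Q(\mathfrak l,\phi_\mathfrak l,\mathfrak a)\oplus C^{p-1}_Q(\mathfrak l,\phi_\mathfrak l,\mathfrak a)$, and second, that this action preserves the subset $Z^p_{Q+}(\mathfrak l,D_\mathfrak l,\mathfrak a)$ cut out by the two equations $\rd\delta=D^\circ\alpha$ and $\rd\epsilon=D^\circ\gamma-\langle\alpha\wedge\delta\rangle$.

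For the first stage, I would observe that the map $(\alpha,\gamma,\delta,\epsilon)(\tau,\sigma):=\big((\alpha,\gamma)(\tau,\sigma),\,(\delta,\epsilon)(\tau,\sigma)\big)$ is by definition the product of the two right actions already established: the action of $C^{p-1}_Q$ on $C^p(\mathfrak l,\mathfrak a)\oplus C^{2p-1}(\mathfrak l)$ from \eqref{Rechtswirkung1} (which preserves $Z^p_Q$), and the action of $C^{p-1}_Q$ on $C^{p-1}_Q$ itself from \eqref{Rechtswirkung2}. Since a diagonal combination of two right actions of the same group is again a right action, the axioms $(\alpha,\gamma,\delta,\epsilon)\cdot\mathbf{1}=(\alpha,\gamma,\delta,\epsilon)$ and $\big((\alpha,\gamma,\delta,\epsilon)(\tau_1,\sigma_1)\big)(\tau_2,\sigma_2)=(\alpha,\gamma,\delta,\epsilon)\big((\tau_1,\sigma_1)(\tau_2,\sigma_2)\big)$ follow componentwise from the corresponding statements already proved. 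So this stage is essentially bookkeeping.

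The substantive part is the second stage: showing $Z^p_{Q+}$ is invariant. Fix $(\alpha,\gamma,\delta,\epsilon)\in Z^p_{Q+}$ and $(\tau,\sigma)\in C^{p-1}_Q$, write $(\alpha',\gamma',\delta',\epsilon')$ for the result of the action, and check the two defining equations for the primed quantities. For the first, $\rd\delta'=\rd\delta+\rd D^\circ\tau=D^\circ\alpha+D^\circ\rd\tau=D^\circ(\alpha+\rd\tau)=D^\circ\alpha'$, using $\rd\delta=D^\circ\alpha$ together with \eqref{dDaTau} (namely $\rd D^\circ\tau=D^\circ\rd\tau$). For the second equation one must expand
\[
\rd\epsilon'=\rd\epsilon+\rd D^\circ\sigma+\rd\langle(\delta+\tfrac12 D^\circ\tau)\wedge\tau\rangle
\]
and compare it with
\[
D^\circ\gamma'-\langle\alpha'\wedge\delta'\rangle=D^\circ\gamma+D^\circ\rd\sigma+D^\circ\langle(\alpha+\tfrac12\rd\tau)\wedge\tau\rangle-\langle(\alpha+\rd\tau)\wedge(\delta+D^\circ\tau)\rangle.
\]
Here I would use $\rd\epsilon=D^\circ\gamma-\langle\alpha\wedge\delta\rangle$ and $\rd D^\circ\sigma=D^\circ\rd\sigma$ (from \eqref{dDlSigma}) to cancel the ``old'' terms, then apply the Leibniz rules \eqref{dWedge} and \eqref{DlWedge} to expand $\rd\langle(\delta+\tfrac12 D^\circ\tau)\wedge\tau\rangle$ and $D^\circ\langle(\alpha+\tfrac12\rd\tau)\wedge\tau\rangle$, substituting $\rd\delta=D^\circ\alpha$, $\rd\alpha=0$ (from $(\alpha,\gamma)\in Z^p_Q$), and the graded commutativity \eqref{AltWedge}. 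The remaining mixed terms $\langle\rd\tau\wedge D^\circ\tau\rangle$-type expressions and the half-integer coefficients should match up by the same identities; here I expect some care with signs, since $p$ is even so $(-1)^p=1$ but $(-1)^{p-1}=-1$, and terms like $\langle D^\circ\tau\wedge\rd\tau\rangle$ versus $\langle\rd\tau\wedge D^\circ\tau\rangle$ differ by $(-1)^{(p-1)(p)}=1$, so they are actually equal, which is what makes the cancellation work. The main obstacle is precisely this sign-and-coefficient reconciliation in the second equation; it is the kind of computation that is routine in principle but where an error in one sign invalidates the whole claim, so I would carry it out explicitly term by term rather than gesture at it. Finally one notes $D^\circ_s$ annihilates all the primed quantities by the argument already given in the proof of \eqref{Rechtswirkung2}, so $(\alpha',\gamma',\delta',\epsilon')$ lies in the correct ambient space, completing the proof.
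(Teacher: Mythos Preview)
Your proposal is correct and follows essentially the same approach as the paper: both argue that the action axioms follow componentwise from the two previously established actions \eqref{Rechtswirkung1} and \eqref{Rechtswirkung2}, and then verify invariance of $Z^p_{Q+}$ by checking the two defining equations using the commutation $\rd D^\circ=D^\circ\rd$ and the Leibniz rules \eqref{dWedge}, \eqref{DlWedge}. The paper is more terse, simply reducing the second equation to $\langle\alpha\wedge\delta\rangle+\rd\epsilon-D^\circ\gamma+\langle(\rd\delta-D^\circ\alpha)\wedge\tau\rangle=0$ and invoking the $Z^p_{Q+}$ conditions, whereas you spell out the sign bookkeeping more carefully; but the substance is the same.
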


\begin{proof}
Suppose $(\alpha,\gamma,\delta,\epsilon)\in Z^p_{Q+}(\mathfrak l,D_\mathfrak l,\mathfrak a)$ and $(\tau,\sigma)\in C_Q^{p-1}(\mathfrak l,\phi_\mathfrak l,\mathfrak a)$.
Since the group action of $C^{p-1}_Q(\mathfrak l,\phi_\mathfrak l,\mathfrak a)$ leaves the cocycles $Z^p_Q(\mathfrak l,\phi_\mathfrak l,\mathfrak a)$ invariant, it remains to show that the equations
\begin{align*}
&\rd(\delta+D^\circ\tau)-D^\circ(\alpha+d\tau)=0\text{ and}\\
\langle(\alpha+\rd\tau)\wedge(\delta&+D^\circ\tau)\rangle+\rd(\epsilon+D^\circ\sigma+\langle\delta\wedge\tau\rangle+\frac{1}{2}\langle D^\circ\tau\wedge\tau\rangle)\\
&-D^\circ(\gamma+\rd\sigma+\langle\alpha\wedge\tau\rangle+\frac{1}{2}\langle\tau\wedge \rd\tau\rangle)=0
\end{align*}
hold.
The first equation follows directly from (\ref{dDaTau}). 
Because of (\ref{dWedge}), (\ref{DlWedge}) and the commutativity of $\rd$ and $D^\circ$ the second equation is equivalent to
\begin{align*}
\langle\alpha\wedge\delta\rangle+\rd\epsilon-D^\circ\gamma+\langle(\rd\delta-D^\circ\alpha)\wedge\tau\rangle=0.
\end{align*}
Since $(\alpha,\gamma,\delta,\epsilon)\in Z^p_{Q+}(\mathfrak l,D_\mathfrak l,\mathfrak a)$, this equation is satisfied.
\end{proof}

We set the $p$-th. quadratic cohomology $H_{Q+}^p(\mathfrak l,D_\mathfrak l,\mathfrak a)$ of $\mathfrak l$ with coefficients in $\mathfrak a$ as 
\[H_{Q+}^p(\mathfrak l,D_\mathfrak l,\mathfrak a):=Z_{Q+}^{p}(\mathfrak l,D_\mathfrak l,\mathfrak a)/C_{Q}^{p-1}(\mathfrak l,\phi_\mathfrak l,\mathfrak a).\]
In addition, we denote the cohomology class of $(\alpha,\gamma,\delta,\epsilon)\in Z^p_{Q+}(\mathfrak{l},D_\mathfrak{l},\mathfrak{a})$ by $[\alpha,\gamma,\delta,\epsilon]$.

Let $(\mathfrak l,D_\mathfrak l,\mathfrak a)=(\mathfrak l_1,D_{\mathfrak l_1},\mathfrak a_1)\oplus(\mathfrak l_2,D_{\mathfrak l_2},\mathfrak a_2)$ be the direct sum of two pairs.
Let $j_i:\mathfrak a_i\rightarrow \mathfrak a$ and $q_i:\mathfrak l\rightarrow\mathfrak l_i$ denote the canonical embeddings and projections for $i=1,2$.
The addition in $C^p(\mathfrak l,\mathfrak a)\oplus C^{2p-1}(\mathfrak l)\oplus C^{p-1}(\mathfrak l,\mathfrak a)\oplus C^{2p-2}(\mathfrak l)$ defines a map
\[+:((q_1,j_1)^*Z^p_{Q+}(\mathfrak l_1,D_{\mathfrak l_1},\mathfrak a_1))\times((q_2,j_2)^*Z^p_{Q+}(\mathfrak l_2,D_{\mathfrak l_2},\mathfrak a_2))\rightarrow Z^p_{Q+}(\mathfrak l,D_\mathfrak l,\mathfrak a).\]

Since the addition respects the group action, we have a natural injective map
\[+:((q_1,j_1)^*H^p_{Q+}(\mathfrak l_1,D_{\mathfrak l_1},\mathfrak a_1))\times((q_2,j_2)^*H^p_{Q+}(\mathfrak l_2,D_{\mathfrak l_2},\mathfrak a_2))\rightarrow H^p_{Q+}(\mathfrak l,D_\mathfrak l,\mathfrak a).\]

We call a cohomology class $[\alpha,\gamma,\delta,\epsilon]\in H^p_{Q+}(\mathfrak l,D_\mathfrak l,\mathfrak a)$ decomposable if there is a decomposition $(\mathfrak l,D_\mathfrak l,\mathfrak a)=(\mathfrak l_1,D_{\mathfrak l_1},\mathfrak a_1)\oplus(\mathfrak l_2,D_{\mathfrak l_2},\mathfrak a_2)$ into a non-trivial direct sum of pairs such that
\[[\alpha,\gamma,\delta,\epsilon]\in (q_1,j_1)^*H^p_{Q+}(\mathfrak l_1,D_{\mathfrak l_1},\mathfrak a_1)+(q_2,j_2)^*H^p_{Q+}(\mathfrak l_2,D_{\mathfrak l_2},\mathfrak a_2).\]
Otherwise, the cohomology class is called indecomposable.
We denote the set of all indecomposable cohomology classes by $H^p_{Q+}(\mathfrak l,D_\mathfrak l,\mathfrak a)_i$.

\begin{lemma}(\cite[page 13]{KO08}, see also \cite[Seite 92]{KO06})\label{lm:4}
Let $(S,U):(\mathfrak l_1,{D_{\mathfrak l_1}}_s,\mathfrak a_1)\rightarrow(\mathfrak l_2,{D_{\mathfrak l_2}}_s,\mathfrak a_2)$ be a morphism of pairs, where ${D_{\mathfrak l_i}}_s$ is semisimple and $(\mathfrak a_i,{D_{\mathfrak a_i}}_s)$ orthogonal $(\mathfrak l_i,{D_{\mathfrak l_i}}_s)$-modules with semisimple derivations ${D_{\mathfrak a_i}}_s$ for $i=1,2$.
Then 
\[(S,U)^*(\alpha_2,\gamma_2):=\big((S,U)^*\alpha_2,(S,U)^*\gamma_2\big)\] defines a pull back map from $Z^p_Q(\mathfrak l_2,\phi_{\mathfrak l_2},\mathfrak a_2)$ to $Z^p_Q(\mathfrak l_1,\phi_{\mathfrak l_1},\mathfrak a_1)$.
\end{lemma}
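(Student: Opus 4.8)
The plan is to verify, one by one, the four defining conditions of $Z^p_Q(\mathfrak l_1,\phi_{\mathfrak l_1},\mathfrak a_1)$ for the tuple $\big((S,U)^*\alpha_2,(S,U)^*\gamma_2\big)$, using only the compatibility properties of the pull back that have already been recorded. To begin with, by the very definition of the two pull back maps, $(S,U)^*$ sends $C^p(\mathfrak l_2,\mathfrak a_2)\oplus C^{2p-1}(\mathfrak l_2)$ into $C^p(\mathfrak l_1,\mathfrak a_1)\oplus C^{2p-1}(\mathfrak l_1)$, so only the four equations $\rd\alpha=0$, $\rd\gamma=\frac12\langle\alpha\wedge\alpha\rangle$, $D^\circ_s\alpha=0$, $D^\circ_s\gamma=0$ remain to be checked. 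Note also that, under the hypotheses of the lemma, $D_{\mathfrak l_i}={D_{\mathfrak l_i}}_s$ and $D_{\mathfrak a_i}={D_{\mathfrak a_i}}_s$ are already semisimple, so $D^\circ_s$ coincides with $D^\circ$ throughout.

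First I would use that the pull backs commute with $\rd$ to obtain $\rd(S,U)^*\alpha_2=(S,U)^*\rd\alpha_2=0$. For the second condition I would combine the commutation with $\rd$ and the multiplicativity of $(S,U)^*$ with respect to $\langle\cdot\wedge\cdot\rangle$ from equation (\ref{eq:SLangleRangle}):
\begin{align*}
\rd(S,U)^*\gamma_2=(S,U)^*\rd\gamma_2=\tfrac12(S,U)^*\langle\alpha_2\wedge\alpha_2\rangle=\tfrac12\big\langle(S,U)^*\alpha_2\wedge(S,U)^*\alpha_2\big\rangle.
\end{align*}
For the two invariance conditions I would invoke Remark \ref{rm:SD=DS}, that $D^\circ$ and the pull back commute; this is applicable to the semisimple derivations because, by the remark following the definition of the category of pairs, $(S,U)$ is also a morphism of pairs for the semisimple parts ${D_{\mathfrak l_i}}_s$, ${D_{\mathfrak a_i}}_s$. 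Hence $D^\circ_s(S,U)^*\alpha_2=(S,U)^*D^\circ_s\alpha_2=0$ and likewise $D^\circ_s(S,U)^*\gamma_2=(S,U)^*D^\circ_s\gamma_2=0$. Taken together, these four computations show $(S,U)^*(\alpha_2,\gamma_2)\in Z^p_Q(\mathfrak l_1,\phi_{\mathfrak l_1},\mathfrak a_1)$.

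I expect essentially no serious obstacle here: the statement is a bookkeeping consequence of the three lemmas describing the behaviour of $(S,U)^*$ under $\rd$, under $\langle\cdot\wedge\cdot\rangle$, and under $D^\circ$. The only point deserving a word of care is that it is $D^\circ_s$, not $D^\circ$, that enters the definition of $Z^p_Q$; but the hypotheses of the lemma place us exactly in the case where the relevant derivations are semisimple, so $D^\circ_s=D^\circ$ and Remark \ref{rm:SD=DS} applies verbatim. More conceptually, one could instead argue that a morphism of pairs intertwines the one-parameter groups $(\e^{t{D_{\mathfrak l_i}}_s},\e^{-t{D_{\mathfrak a_i}}_s})$, so it carries cochains invariant under the target group to cochains invariant under the source group, which is precisely the content of the two invariance conditions.
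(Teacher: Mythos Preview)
Your argument is correct: the four defining conditions for $Z^p_Q$ are checked directly using the already-established facts that $(S,U)^*$ commutes with $\rd$, satisfies equation~(\ref{eq:SLangleRangle}), and commutes with $D^\circ$ (Remark~\ref{rm:SD=DS}), together with the observation that under the hypotheses $D^\circ_s=D^\circ$. The paper itself gives no proof of this lemma---it is simply quoted from \cite{KO08} and \cite{KO06}---so there is nothing to compare; your write-up supplies exactly the straightforward verification one would expect those references to contain.
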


\begin{lemma}
Let $(S,U):(\mathfrak l_1,D_{\mathfrak l_1},\mathfrak a_1)\rightarrow(\mathfrak l_2,D_{\mathfrak l_2},\mathfrak a_2)$ be a morphism of pairs.
Then
\begin{align}\label{eq:zusu}
(S,U)^*(\alpha_2,\gamma_2,\delta_2,\epsilon_2):=\big((S,U)^*\alpha_2,(S,U)^*\gamma_2,(S,U)^*\delta_2,(S,U)^*\epsilon_2\big)
\end{align}
defines a pull back map from 
$Z^p_{Q+}(\mathfrak l_2,D_{\mathfrak l_2},\mathfrak a_2)$ to $Z^p_{Q+}(\mathfrak l_1,D_{\mathfrak l_1},\mathfrak a_1)$.
\end{lemma}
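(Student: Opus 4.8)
The plan is to verify the two defining equations of $Z^p_{Q+}$ for the pulled-back quadruple $(S,U)^*(\alpha_2,\gamma_2,\delta_2,\epsilon_2)$, using the earlier lemmas about the compatibility of $(S,U)^*$ with $\rd$, with $\langle\cdot\wedge\cdot\rangle$, and with $D^\circ$. First I would recall that by the preceding Lemma~\ref{lm:4} (applied to the semisimple parts, using the remark that $(S,U)$ is also a morphism of the semisimplified pairs), the pair $\big((S,U)^*\alpha_2,(S,U)^*\gamma_2\big)$ already lies in $Z^p_Q(\mathfrak l_1,\phi_{\mathfrak l_1},\mathfrak a_1)$; in particular $\rd(S,U)^*\alpha_2=0$, $\rd(S,U)^*\gamma_2=\tfrac12\langle(S,U)^*\alpha_2\wedge(S,U)^*\alpha_2\rangle$, and the $D^\circ_s$-invariance conditions hold. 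It also remains to check that $(S,U)^*\delta_2$ and $(S,U)^*\epsilon_2$ are $D^\circ_s$-invariant, i.e. that $\big((S,U)^*\delta_2,(S,U)^*\epsilon_2\big)\in C^{p-1}_Q(\mathfrak l_1,\phi_{\mathfrak l_1},\mathfrak a_1)$; this follows from Remark~\ref{rm:SD=DS}, since $D^\circ_{1,s}(S,U)^*\delta_2=(S,U)^*D^\circ_{2,s}\delta_2=0$ and likewise for $\epsilon_2$.

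The substance is then the two remaining identities. For the first, I would compute
\[
\rd(S,U)^*\delta_2=(S,U)^*\rd\delta_2=(S,U)^*D^\circ_2\alpha_2=D^\circ_1(S,U)^*\alpha_2,
\]
where the first equality is that $(S,U)^*$ commutes with $\rd$, the second uses $(\alpha_2,\gamma_2,\delta_2,\epsilon_2)\in Z^p_{Q+}$, and the third is Remark~\ref{rm:SD=DS}. For the second, starting from $\rd\epsilon_2=D^\circ_2\gamma_2-\langle\alpha_2\wedge\delta_2\rangle$, I would apply $(S,U)^*$ and use in turn: commutation with $\rd$, Remark~\ref{rm:SD=DS} for the $D^\circ$ term, and equation~(\ref{eq:SLangleRangle}) for the wedge term, obtaining
\[
\rd(S,U)^*\epsilon_2=D^\circ_1(S,U)^*\gamma_2-\langle(S,U)^*\alpha_2\wedge(S,U)^*\delta_2\rangle,
\]
which is exactly the required relation for the pulled-back quadruple.

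Finally I would note that the map is well defined as a pull back in the obvious sense: it is the restriction to $Z^p_{Q+}(\mathfrak l_2,D_{\mathfrak l_2},\mathfrak a_2)$ of the componentwise $(S,U)^*$ on $C^p(\mathfrak l,\mathfrak a)\oplus C^{2p-1}(\mathfrak l)\oplus C^{p-1}(\mathfrak l,\mathfrak a)\oplus C^{2p-2}(\mathfrak l)$, and the computations above show its image lands in $Z^p_{Q+}(\mathfrak l_1,D_{\mathfrak l_1},\mathfrak a_1)$.

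I do not expect a genuine obstacle here: the proof is a routine bookkeeping exercise assembling the four compatibility properties of $(S,U)^*$ (with $\rd$, with $\langle\cdot\wedge\cdot\rangle$, with $D^\circ$, and with $D^\circ_s$) that were established in the lemmas and remarks above. The only point requiring a little care is making sure the $D^\circ_s$-invariance of the $(\delta_2,\epsilon_2)$-components transfers correctly, which is why invoking Remark~\ref{rm:SD=DS} (commutation of $(S,U)^*$ with $D^\circ$, hence with $D^\circ_s$) rather than re-deriving it is the cleanest route.
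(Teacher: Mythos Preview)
Your proposal is correct and follows essentially the same route as the paper: invoke Lemma~\ref{lm:4} for the $(\alpha_2,\gamma_2)$ part, use Remark~\ref{rm:SD=DS} for the $D^\circ_s$-invariance of the pulled-back $(\delta_2,\epsilon_2)$, and then verify the two $Z^p_{Q+}$ identities via the commutation of $(S,U)^*$ with $\rd$, with $D^\circ$, and with $\langle\cdot\wedge\cdot\rangle$ (equation~(\ref{eq:SLangleRangle})). The paper's proof is slightly terser but uses exactly the same ingredients.
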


\begin{proof}
From Lemma \ref{lm:4} we know that equation (\ref{eq:zusu}) defines a pull back from 
$Z^p_Q(\mathfrak l_2,\phi_{\mathfrak l_2},\mathfrak a_2)\oplus C^{p-1}_Q(\mathfrak l_2,\phi_{\mathfrak l_2},\mathfrak a_2)$ to $Z^p_Q(\mathfrak l_1,\phi_{\mathfrak l_1},\mathfrak a_1)\oplus C^{p-1}_Q(\mathfrak l_1,\phi_{\mathfrak l_1},\mathfrak a_1)$.
Because of equation (\ref{eq:SLangleRangle}), (\ref{dDlSigma}),(\ref{dDaTau}) and remark \ref{rm:SD=DS}, we get 
\[\rd(S,U)^*\delta_2-D^\circ(S,U)^*\alpha_2=0\] and \[\langle(S,U)^*\alpha_2\wedge(S,U)^*\delta_2\rangle+\rd(S,U)^*\epsilon_2-D^\circ (S,U)^*\gamma_2=0.\]
Thus $(S,U)^*$ defines a pull back map from $Z^p_{Q+}(\mathfrak l_2,D_{\mathfrak l_2},\mathfrak a_2)$ to $Z^p_{Q+}(\mathfrak l_1,D_{\mathfrak l_1},\mathfrak a_1)$.
\end{proof}

\begin{definition}
Since the pull back map respects the group action in the sense 
\[(S,U)^*((\alpha_2,\gamma_2,\delta_2,\epsilon_2)(\tau,\sigma))=((S,U)^*(\alpha_2,\gamma_2,\delta_2,\epsilon_2))((S,U)^*\tau,(S,U)^*\sigma),\]
we also have a pull back map from $H^p_{Q+}(\mathfrak l_2,D_{\mathfrak{l}_2},\mathfrak a_2)$ to $H^p_{Q+}(\mathfrak l_1,D_{\mathfrak{l}_1},\mathfrak a_1)$ given by
\[(S,U)^*[\alpha_2,\gamma_2,\delta_2,\epsilon_2]:=[(S,U)^*(\alpha_2,\gamma_2,\delta_2,\epsilon_2)].\]
\end{definition}

Of course, the pull back map of isomorphisms of pairs also map decomposable cohomology classes to decomposable ones.

\section{The standard model}\label{MSLA:Std}

In this section we define the canonical example $(\mathfrak d_{\alpha,\gamma},D_{\delta,\epsilon})$ of a metric Lie algebra with skewsymmetric derivation for every $(\alpha,\gamma,\delta,\epsilon)\in Z^2_{Q+}(\mathfrak l,D_\mathfrak l,\mathfrak a)$.
Moreover, we show that this standard model is also a standard example of a quadratic extension and we describe this balanced quadratic extensions on the level of the quadratic cocycles. Therefor, we define the subset $Z^2_{Q+}(\mathfrak l,D_\mathfrak l,\mathfrak a)_b$ of $Z^2_{Q+}(\mathfrak l,D_\mathfrak l,\mathfrak a)$.

\begin{example}
Let $(\mathfrak{l},[\cdot,\cdot]_\mathfrak l)$ be a Lie algebra, $(\rho,\mathfrak{a},\langle\cdot,\cdot\rangle_\mathfrak a)$ an orthogonal $\mathfrak{l}$-module and $(\alpha,\gamma)\in C^2(\mathfrak l,\mathfrak a)\oplus C^{3}(\mathfrak l)$.
We define a symmetric bilinear form $\langle\cdot,\cdot\rangle$ on $\mathfrak l^*\oplus\mathfrak a\oplus\mathfrak l$ by 
\[\langle Z_1+A_1+L_1,Z_2+A_2+L_2\rangle=Z_1(L_2)+\langle A_1,A_2\rangle_\mathfrak a+Z_2(L_1)\] for all $Z_1,Z_2\in\mathfrak l^*$, $A_1,A_2\in\mathfrak a$ and $L_1,L_2\in\mathfrak l$.
Moreover, we consider a skewsymmetric bilinear map $[\cdot,\cdot]:\mathfrak l^*\oplus\mathfrak a\oplus\mathfrak l\times\mathfrak l^*\oplus\mathfrak a\oplus\mathfrak l\rightarrow\mathfrak l^*\oplus\mathfrak a\oplus\mathfrak l$, which is given by
\begin{align*}
[\mathfrak l^*,\mathfrak l^*\oplus\mathfrak a]&=0,\\
[L_1,L_2]&=\gamma(L_1,L_2,\cdot)+\alpha(L_1,L_2)+[L_1,L_2]_\mathfrak l,\\
[L,A]&=-\langle A,\alpha(L,\cdot)\rangle+\rho(L)A,\\
[A_1,A_2]&=\langle\rho(\cdot)A_1,A_2\rangle,\\
[L,Z]&=\operatorname{ad}^*(L)(Z)=-Z([L,\cdot]_\mathfrak l)
\end{align*}
for all $Z\in\mathfrak l^*$, $A,A_1,A_2\in\mathfrak a$ and $L,L_1,L_2\in\mathfrak l$.

This definition is exactly the definition of the standard model in \cite{KO06}.
\end{example}

\begin{lemma}(\cite[page 9]{KO08}) \label{lm:2}
Let $(\mathfrak{l},{D_\mathfrak l}_s)$ be a Lie algebra with semisimple derivation ${D_\mathfrak l}_s$ and $(\rho,\mathfrak{a},\langle\cdot,\cdot\rangle,{D_\mathfrak a}_s)$ an orthogonal $(\mathfrak{l},{D_\mathfrak l}_s)$-module, where ${D_\mathfrak{a}}_s$ is semisimple.\\
Then $\mathfrak d_{\alpha,\gamma}(\mathfrak l,\mathfrak a):=(\mathfrak{l}^*\oplus\mathfrak{a}\oplus\mathfrak{l},[\cdot,\cdot],\langle\cdot,\cdot\rangle)$ is a metric Lie algebra with skewsymmetric derivation $D_{0,0}({D_\mathfrak l}_s,{D_\mathfrak a}_s):=-{D_\mathfrak{l}}_s^*\oplus  {D_\mathfrak{a}}_s\oplus{D_\mathfrak{l}}_s$ if and only if
$(\alpha,\gamma)\in Z_{Q}^2(\mathfrak l,\phi_\mathfrak l,\mathfrak a)$.
\end{lemma}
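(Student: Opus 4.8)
The statement to be proved, Lemma \ref{lm:2}, asserts an "if and only if" linking the algebraic structure $(\mathfrak d_{\alpha,\gamma}, D_{0,0})$ being a metric Lie algebra with skewsymmetric derivation to the cocycle condition $(\alpha,\gamma)\in Z^2_Q(\mathfrak l,\phi_\mathfrak l,\mathfrak a)$. Since the bilinear form $\langle\cdot,\cdot\rangle$ on $\mathfrak l^*\oplus\mathfrak a\oplus\mathfrak l$ is visibly non-degenerate and the representation $\rho$ and $D_\mathfrak a{}_s$ are skewsymmetric by hypothesis, the content splits into three checks: (i) the bracket $[\cdot,\cdot]$ satisfies the Jacobi identity; (ii) $\langle\cdot,\cdot\rangle$ is ad-invariant for this bracket; (iii) $D_{0,0}=-{D_\mathfrak l}_s^*\oplus{D_\mathfrak a}_s\oplus{D_\mathfrak l}_s$ is a skewsymmetric derivation. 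The plan is to translate each of these into conditions on $(\alpha,\gamma)$ and show they collapse exactly to $\rd\alpha=0$, $\rd\gamma=\tfrac12\langle\alpha\wedge\alpha\rangle$, $D^\circ_s\alpha=0$, $D^\circ_s\gamma=0$.

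First I would verify ad-invariance (ii), which is the easiest: one checks $\langle[X,Y],Z\rangle=\langle X,[Y,Z]\rangle$ on triples of homogeneous elements drawn from $\mathfrak l^*$, $\mathfrak a$, $\mathfrak l$. Most cases are immediate from the formulas; the coupling terms work out because $\rho$ is skewsymmetric on $\mathfrak a$, the $\mathfrak l^*$-part of $\langle\cdot,\cdot\rangle$ is the canonical dual pairing, and the $[\mathfrak a,\mathfrak a]$ and $[L,A]$ brackets were precisely designed (via $\langle A,\alpha(L,\cdot)\rangle$ and $\langle\rho(\cdot)A_1,A_2\rangle$) to make the mixed identities hold. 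This step imposes no condition on $(\alpha,\gamma)$. Next I would do (iii): $D_{0,0}$ is skewsymmetric because ${D_\mathfrak l}_s$ on $\mathfrak l$, $-{D_\mathfrak l}_s^*$ on $\mathfrak l^*$ are dual blocks and ${D_\mathfrak a}_s$ is skewsymmetric on $\mathfrak a$; the derivation property $D_{0,0}[X,Y]=[D_{0,0}X,Y]+[X,D_{0,0}Y]$ again reduces to a few cases. The $[\mathfrak l,\mathfrak l]$ bracket forces the $\mathfrak a$-component to obey $D_\mathfrak a{}_s\alpha(L_1,L_2) - \alpha({D_\mathfrak l}_s L_1,L_2)-\alpha(L_1,{D_\mathfrak l}_s L_2)=0$, i.e. $D^\circ_s\alpha=0$, and the $\mathfrak l^*$-component gives $D^\circ_s\gamma=0$; the remaining cases use $\rho({D_\mathfrak l}_s L)=[{D_\mathfrak a}_s,\rho(L)]$ from the orthogonal module axiom and hold automatically.

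The main obstacle is step (i), the Jacobi identity, since this is where the remaining cocycle equations must emerge. I would evaluate $[[X,Y],Z]+[[Y,Z],X]+[[Z,X],Y]$ on all homogeneous triples. The triples involving at least two elements of $\mathfrak l^*$, or the patterns $(\mathfrak l^*,\mathfrak a,\cdot)$ and $(\mathfrak l^*,\mathfrak l,\mathfrak l)$, vanish using $[\mathfrak l^*,\mathfrak l^*\oplus\mathfrak a]=0$ and that $\operatorname{ad}^*$ is the coadjoint action, so the dual pairing is $\mathfrak l$-equivariant. The triple $(\mathfrak l,\mathfrak a,\mathfrak a)$ and $(\mathfrak l,\mathfrak l,\mathfrak a)$ produce, after expansion, the cocycle condition $\rd\alpha=0$ (together with the fact that $\rho$ is a representation, already assumed). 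The crucial triple is $(\mathfrak l,\mathfrak l,\mathfrak l)$: its $\mathfrak l$-component gives the Jacobi identity of $\mathfrak l$ itself; its $\mathfrak a$-component again yields $\rd\alpha=0$; and its $\mathfrak l^*$-component yields precisely $\rd\gamma(L_1,L_2,L_3,\cdot) = \tfrac12\langle\alpha\wedge\alpha\rangle(L_1,L_2,L_3,\cdot)$ — this is the computation requiring genuine care, since one must correctly account for the $\langle A,\alpha(L,\cdot)\rangle$ feedback terms arising from $[L,[L',A]]$-type expressions and match combinatorial factors. Conversely, assuming $(\alpha,\gamma)\in Z^2_Q$, the same computations run backward to show all of (i)–(iii). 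I would reference \cite{KO06} for the structure of the argument in the underived case and only indicate the new points where ${D_\mathfrak l}_s$ and ${D_\mathfrak a}_s$ enter, since the lemma is quoted from \cite{KO08}; accordingly the write-up can be brief, citing that reference and noting that the derivation conditions $D^\circ_s\alpha=D^\circ_s\gamma=0$ are exactly what makes $D_{0,0}$ a derivation.
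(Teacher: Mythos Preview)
Your proof plan is correct and complete: ad-invariance holds by construction and imposes no condition on $(\alpha,\gamma)$; the Jacobi identity on triples from $\mathfrak l$ yields $\rd\alpha=0$ in the $\mathfrak a$-component and $\rd\gamma=\tfrac12\langle\alpha\wedge\alpha\rangle$ in the $\mathfrak l^*$-component; and the derivation property of $D_{0,0}$ on $[\mathfrak l,\mathfrak l]$ is equivalent to $D^\circ_s\alpha=0$ and $D^\circ_s\gamma=0$. Note however that the paper does not actually supply a proof of this lemma at all---it is stated as a citation of \cite[page 9]{KO08} (with the underlying metric Lie algebra computation already in \cite{KO06}) and used as a black box in the proof of Theorem~\ref{thm:3}---so your sketch goes well beyond what the paper itself records.
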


If $\mathfrak{l}$ and $\mathfrak{a}$ are clear from the context, we simply write $\mathfrak{d}_{\alpha,\gamma}$ for $\mathfrak d_{\alpha,\gamma}(\mathfrak l,\mathfrak a)$.

\begin{example}
Let $D_\mathfrak l$ and $D_\mathfrak a$ be derivations of $\mathfrak l$ and $\mathfrak a$. Consider $(\delta,\epsilon)\in C^{1}(\mathfrak l,\mathfrak a)\oplus C^{2}(\mathfrak l)$, we define
\[D_{\delta,\epsilon}(D_\mathfrak l,D_\mathfrak a)=\begin{pmatrix}
   -D_\mathfrak l^* & -\delta^* & \overline{\epsilon}\\
     0    &   D_\mathfrak a  & \delta\\
     0    &    0   & D_\mathfrak l
  \end{pmatrix}.\]
Here $\overline{\epsilon}$ is the uniquely determined linear map from $\mathfrak{l}$ to $\mathfrak{l}^*$ given by $\epsilon(L_1,L_2)=\langle \overline{\epsilon}(L_1),L_2\rangle=(\overline{\epsilon}(L_1))(L_2)$ for $L_1,L_2\in\mathfrak l$ and $\delta^*:\mathfrak a\rightarrow \mathfrak l^*$ is the dual map of $\delta$ given by $\langle\delta^*A,L\rangle=\langle A,\delta L\rangle$ for $L\in\mathfrak l$ and $A\in\mathfrak a$.
If $D_\mathfrak l$ and $D_\mathfrak a$ are clear from the context, we simply write $D_{\delta,\epsilon}$.
\end{example}

Let $i:\mathfrak a\rightarrow \mathfrak a\oplus \mathfrak l$ denote the canonical embedding and $p:\mathfrak a\oplus \mathfrak l\rightarrow \mathfrak l$ the canonical projection.
If $(\mathfrak{d}_{\alpha,\gamma}(\mathfrak{l},\mathfrak{a}),D_{\delta,\epsilon}(D_\mathfrak{l},D_\mathfrak{a}))$ is a metric Lie algebra with skewsymmetric derivation, then
$((\mathfrak d_{\alpha,\gamma}(\mathfrak l,\mathfrak a),D_{\delta,\epsilon}(D_\mathfrak l,D_\mathfrak a)),\mathfrak{l}^*,i,p)$ is a quadratic extension of $(\mathfrak l,D_\mathfrak l)$ by $(\mathfrak a,D_\mathfrak a)$, where we identify $\mathfrak d_{\alpha,\gamma}(\mathfrak l,\mathfrak a)/\mathfrak l^*$ and $\mathfrak a\oplus\mathfrak l$.

Let $(\mathfrak{d}_{\alpha,\gamma}(\mathfrak{l},\mathfrak{a}),D_{\delta,\epsilon}(D_\mathfrak{l},D_\mathfrak{a}))$  also denote the quadratic extension of the standard model.

\begin{theorem}\label{thm:3}
Let $(\mathfrak{l},D_\mathfrak l)$ be a Lie algebra with derivation $D_\mathfrak l$ and $(\rho,\mathfrak{a},\langle\cdot,\cdot\rangle,D_\mathfrak a)$ an orthogonal $(\mathfrak{l},D_\mathfrak l)$-module.
Then $(\mathfrak d_{\alpha,\gamma}(\mathfrak l,\mathfrak a),D_{\delta,\epsilon}(D_\mathfrak l,D_\mathfrak a))$ is a metric Lie algebra with skewsymmetric derivation, whose semisimple part of the Jordan decomposition of $D_{\delta,\epsilon}(D_\mathfrak l,D_\mathfrak a)$ is equal to $D_{0,0}({D_\mathfrak l}_s,{D_\mathfrak a}_s)$, if and only if $(\alpha,\gamma,\delta,\epsilon)\in Z_{Q+}^2(\mathfrak l,D_\mathfrak l,\mathfrak a)$.
\end{theorem}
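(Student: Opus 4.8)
The plan is to reduce the statement to Lemma \ref{lm:2} together with a bookkeeping computation about the Jordan decomposition of $D_{\delta,\epsilon}$, so that the new content over \cite{KO08} is precisely the interplay of the extra data $(\delta,\epsilon)$ with the already-understood pair $(\alpha,\gamma)$. First I would observe that $D_{\delta,\epsilon}(D_\mathfrak l,D_\mathfrak a)$ is, by construction, block upper triangular with diagonal blocks $-D_\mathfrak l^*$, $D_\mathfrak a$, $D_\mathfrak l$; hence its semisimple part is the block-diagonal matrix with entries the semisimple parts of these, which is exactly $D_{0,0}({D_\mathfrak l}_s,{D_\mathfrak a}_s) = -{D_\mathfrak l}_s^*\oplus {D_\mathfrak a}_s\oplus {D_\mathfrak l}_s$, \emph{provided} one knows that the off-diagonal corrections $-\delta^*$, $\overline\epsilon$, $\delta$ contribute only to the nilpotent part. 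This requires checking that $D_{\delta,\epsilon} - D_{0,0}({D_\mathfrak l}_s,{D_\mathfrak a}_s)$ is nilpotent and commutes with $D_{0,0}({D_\mathfrak l}_s,{D_\mathfrak a}_s)$; the commutation is equivalent to the conditions $D_s^\circ\delta = 0$ and $D_s^\circ\epsilon = 0$ (unwinding the definitions of $D_s^\circ$ on $C^1(\mathfrak l,\mathfrak a)$ and $C^2(\mathfrak l)$ via $\overline\epsilon$ and $\delta^*$), which is precisely membership of $(\delta,\epsilon)$ in $C^{p-1}_Q(\mathfrak l,\phi_\mathfrak l,\mathfrak a)$. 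So the ``semisimple part equals $D_{0,0}$'' clause in the theorem is, modulo linear algebra, equivalent to $D_s^\circ\delta=D_s^\circ\epsilon=0$, which is baked into $Z^2_{Q+}$.

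Next I would address when $D_{\delta,\epsilon}$ is a skewsymmetric derivation of $\mathfrak d_{\alpha,\gamma}$. Skewsymmetry with respect to $\langle\cdot,\cdot\rangle$ is a direct bilinear-form computation on $\mathfrak l^*\oplus\mathfrak a\oplus\mathfrak l$: one pairs each pair of the three summands and verifies the contributions of $-D_\mathfrak l^*$ against $D_\mathfrak l$, of $D_\mathfrak a$ against itself, and of the off-diagonal blocks $-\delta^*$ versus $\delta$ and $\overline\epsilon$ versus $0$; skewsymmetry of $D_\mathfrak a$, the defining relation $\langle\delta^*A,L\rangle = \langle A,\delta L\rangle$, and skewsymmetry of $\overline\epsilon$ (which holds because $\epsilon\in C^2(\mathfrak l)$ is alternating) make this close without any cocycle condition. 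The derivation property $D_{\delta,\epsilon}[X,Y] = [D_{\delta,\epsilon}X,Y] + [X,D_{\delta,\epsilon}Y]$ must then be checked bracket-type by bracket-type against the five defining brackets in the Example. The $[\mathfrak l^*,\cdot]$ and $[A_1,A_2]$ cases and the ``semisimple skeleton'' of the $[L_1,L_2]$, $[L,A]$, $[L,Z]$ cases are already guaranteed by Lemma \ref{lm:2} applied to $D_{0,0}$; what remains is exactly the contribution of the nilpotent correction, and collecting those terms across the $[L_1,L_2]$ and $[L,A]$ brackets yields precisely the two equations $\rd\delta = D^\circ\alpha$ and $\rd\epsilon = D^\circ\gamma - \langle\alpha\wedge\delta\rangle$ defining $Z^2_{Q+}$. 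Conversely, if those equations hold, the derivation identity is satisfied, which gives the ``if'' direction.

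The main obstacle I anticipate is the second of these two equations: tracking the $\overline\epsilon$-term of $D_{\delta,\epsilon}$ through the $[L_1,L_2] = \gamma(L_1,L_2,\cdot) + \alpha(L_1,L_2) + [L_1,L_2]_\mathfrak l$ bracket produces a $\mathfrak l^*$-valued identity that, after using the relation between $\overline\epsilon$ and $\epsilon$ and the formula for $D^\circ$ on $C^3(\mathfrak l)$, must be recognized as $\rd\epsilon = D^\circ\gamma - \langle\alpha\wedge\delta\rangle$ — and the cross term $\langle\alpha\wedge\delta\rangle$ arises only from carefully combining the $[L,A]$-bracket contribution $-\langle A,\alpha(L,\cdot)\rangle$ with the $\delta$-block of the derivation, so the sign and the normalization of the wedge pairing need care. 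Once both equations are matched, I would note that Lemma \ref{lm:2} already supplies $(\alpha,\gamma)\in Z^2_Q(\mathfrak l,\phi_\mathfrak l,\mathfrak a)$ from the hypothesis that $(\mathfrak d_{\alpha,\gamma},D_{0,0})$ is a metric Lie algebra with skewsymmetric derivation, so all four conditions assemble to $(\alpha,\gamma,\delta,\epsilon)\in Z^2_{Q+}(\mathfrak l,D_\mathfrak l,\mathfrak a)$, completing both directions.
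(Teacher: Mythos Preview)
Your proposal is correct and follows essentially the same route as the paper's own proof: reduce the metric-Lie-algebra part to Lemma~\ref{lm:2}, verify skewsymmetry of $D_{\delta,\epsilon}$ directly from the block form, extract the two conditions $\rd\delta=D^\circ\alpha$ and $\rd\epsilon=D^\circ\gamma-\langle\alpha\wedge\delta\rangle$ from the derivation identity on the $[L_1,L_2]$ bracket, and identify the semisimple-part clause with the commutation $[D_{0,0}({D_\mathfrak l}_s,{D_\mathfrak a}_s),\,D_{\delta,\epsilon}({D_\mathfrak l}_n,{D_\mathfrak a}_n)]=0$, which unwinds to $D_s^\circ\delta=0$ and $D_s^\circ\epsilon=0$. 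The only cosmetic difference is that the paper carries out the $[L_1,L_2]$ computation explicitly in components (its three displayed equations for the $\mathfrak l$-, $\mathfrak a$-, and $\mathfrak l^*$-parts) rather than phrasing it as ``semisimple skeleton plus nilpotent correction''; your phrasing is fine once one notes that $D_s^\circ\alpha=0$ and $D_s^\circ\gamma=0$ from $Z^2_Q$ make $D^\circ\alpha=D_n^\circ\alpha$ and $D^\circ\gamma=D_n^\circ\gamma$, so the ``remaining'' terms really do collapse to the two $Z^2_{Q+}$ equations.
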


\begin{proof}
Because of Lemma \ref{lm:2} the standard model $(\mathfrak d_{\alpha,\gamma}(\mathfrak l,\mathfrak a),D_{0,0}({D_\mathfrak l}_s,{D_\mathfrak a}_s))$ is a metric Lie algebra with skewsymmetric derivation,
if and only if $(\alpha,\gamma)$ is an element in $Z^2_{Q}(\mathfrak l,\phi_\mathfrak l,\mathfrak a)$.
Thus it remains to show that $D_{\delta,\epsilon}(D_\mathfrak l,D_\mathfrak a)$ is a skewsymmetric derivation of $\mathfrak d_{\alpha,\gamma}(\mathfrak l,\mathfrak a)$ if and only if the two conditions
\begin{align}
\rd\delta=D^\circ\alpha\label{eq:imB1},\\
\rd\epsilon=D^\circ\gamma-\langle\alpha\wedge \delta\rangle\label{eq:imB2}
\end{align}
hold and that $D_{0,0}({D_\mathfrak l}_s,{D_\mathfrak a}_s)$ is exactly the semisimple part of $D_{\delta,\epsilon}(D_\mathfrak l,D_\mathfrak a)$, if and only if $D_s^\circ\delta=0$ and $D_s^\circ\epsilon=0$ is satisfied.

So, let $D_{\delta,\epsilon}$ be a skewsymmetric derivation of $\mathfrak d_{\alpha,\gamma}$.
It holds $D[L_1,L_2]=[DL_1,L_2]+[L_1,DL_2]$ for $L_1,L_2\in\mathfrak l$ if and only if 
\begin{align*}
D_\mathfrak l[L_1,L_2]_\mathfrak l=&[D_\mathfrak lL_1,L_2]_\mathfrak l+[L_1,D_\mathfrak lL_2]_\mathfrak l,\\
D_\mathfrak a\alpha(L_1,L_2)=&-\delta[L_1,L_2]_\mathfrak l+\alpha(D_\mathfrak lL_1,L_2)-\rho(L_2)(\delta L_1)+\alpha(L_1,D_\mathfrak lL_2)+\rho(L_1)(\delta L_2),\\
\overline{\epsilon}[L_1,L_2]_\mathfrak l=&D_\mathfrak l^*\gamma(L_1,L_2,\cdot)+\gamma(D_\mathfrak lL_1,L_2,\cdot)+\langle \delta L_1,\alpha(L_2,\cdot)\rangle-\operatorname{ad}^*(L_2)(\overline{\epsilon}L_1)\\
&\delta^*\alpha(L_1,L_2)+\gamma(L_1,D_\mathfrak lL_2,\cdot)-\langle \delta L_2,\alpha(L_1,\cdot)\rangle+\operatorname{ad}^*(L_1)(\overline{\epsilon}L_2)
\end{align*}
is satisfied for all $L_1,L_2\in\mathfrak l$.
These equations are equivalent to (\ref{eq:imB1}) and (\ref{eq:imB2}).
On the other side it is easy to prove for $(\alpha,\gamma)\in Z^2_Q(\mathfrak l,\phi_\mathfrak l,\mathfrak a)$ that $D_{\delta,\epsilon}(D_\mathfrak l,D_\mathfrak a)$ is a skewsymmetric derivation of $\mathfrak d_{\alpha,\gamma}(\mathfrak l,\mathfrak a)$, because of
condition (\ref{eq:imB1}) and (\ref{eq:imB2}).
Finally, $D_{0,0}({D_\mathfrak l}_s,{D_\mathfrak a}_s)$ is the semisimple part of $D_{\delta,\epsilon}(D_\mathfrak l,D_\mathfrak a)$, if and only if 
\[D_{0,0}({D_\mathfrak l}_s,{D_\mathfrak a}_s)D_{\delta,\epsilon}({D_\mathfrak l}_n,{D_\mathfrak a}_n)=D_{\delta,\epsilon}({D_\mathfrak l}_n,{D_\mathfrak a}_n)D_{0,0}({D_\mathfrak l}_s,{D_\mathfrak a}_s)\]
holds, which is equivalent to $D_s^\circ\delta=0$ and $D_s^\circ\epsilon=0$.
\end{proof}

Again, we remark that nilpotent metric symplectic Lie algebra with bijective skewsymmetric derivations are our objects of interest to understand the metric, symplectic Lie algebras.
So, we already defined balanced quadratic extensions only for this kind of metric Lie algebras with derivations in section \ref{MSLA:QuadrErw} and now we again limit our observations for the rest of this section.

\begin{definition}
Let $\mathfrak l$ be a nilpotent Lie algebra with bijective derivation $D_\mathfrak l$
and $(\mathfrak a,D_\mathfrak a)$ a trivial $(\mathfrak l,D_\mathfrak l)$-module with bijective $D_\mathfrak a$. Let $m$ denote the smallest positive integer such that $\mathfrak l^{m+2}=0$.
We define the set of all balanced cocycles
$Z^2_{Q}(\mathfrak l,\phi_\mathfrak l,\mathfrak a)_b$
as the set of all cocycles $(\alpha,\gamma)\in Z^2_{Q}(\mathfrak l,\phi_\mathfrak l,\mathfrak a)$, which satisfy the following conditions for every $k=0,\dots,m$: 
\begin{itemize}
\item[($A_k$)] If there is an $A_0\in\mathfrak a$ and $Z_0\in (\mathfrak l^{k+1})^*$ for a given $L_0\in \mathfrak z(\mathfrak l)\cap\mathfrak l^{k+1}$ such that
\begin{align*}
&\alpha(L,L_0)=0,\\
&\gamma(L,L_0,\cdot)=-\langle A_0,\alpha(L,\cdot)\rangle_\mathfrak a +\langle Z_0,[L,\cdot]_\mathfrak l\rangle\text{ as an element of }(\mathfrak l^{k+1})^*
\end{align*}
is satisfied for all $L\in\mathfrak l$, then $L_0=0$.
\item[($B_k$)] The subspace $\alpha(\ker[\cdot,\cdot]_{\mathfrak l\otimes\mathfrak l^{k+1}})\subset \mathfrak a$ is non-degenerate with respect to $\langle\cdot,\cdot\rangle_\mathfrak a$.
\end{itemize}
Moreover, let $Z^2_{Q+}(\mathfrak l,D_\mathfrak l,\mathfrak a)_b$ denote the set of cocycles $(\alpha,\gamma,\delta,\epsilon)\in Z^2_{Q+}(\mathfrak l,D_\mathfrak l,\mathfrak a)$, where $(\alpha,\gamma)\in Z^2_{Q+}(\mathfrak l,\phi_\mathfrak l,\mathfrak a)_b$.
\end{definition}

The set $Z^2_{Q}(\mathfrak l,\phi_\mathfrak l,\mathfrak a)_b$ is used to describe on the level of quadratic cocycles when standard models of nilpotent metric Lie algebras define balanced quadratic extensions. Here we have:

\begin{lemma} (\cite{KO08}, see also \cite{Ka07})
Let $(\mathfrak d_{\alpha,\gamma}(\mathfrak l,\mathfrak a),D_{0,0}({D_\mathfrak l}_s,{D_\mathfrak a}_s))$ be a nilpotent, metric Lie algebra with bijective derivation.
The quadratic extension $(\mathfrak d_{\alpha,\gamma}(\mathfrak l,\mathfrak a),D_{0,0}({D_\mathfrak l}_s,{D_\mathfrak a}_s))$ is balanced if and only if $(\alpha,\gamma)\in Z^2_{Q}(\mathfrak l,\phi_\mathfrak l,\mathfrak a)_b$.
\end{lemma}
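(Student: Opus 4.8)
The plan is to deduce the lemma from the corresponding statement for nilpotent metric Lie algebras in \cite{Ka07} (see also \cite{KO06}), and to indicate the mechanism. The first observation is that ``balanced'' means precisely $\mathfrak l^*=\mathfrak i(\mathfrak d_{\alpha,\gamma}(\mathfrak l,\mathfrak a))$, i.e.\ the distinguished isotropic ideal $\mathfrak l^*$ of the quadratic extension attached to the standard model coincides with the canonical isotropic ideal. This is a property of the metric Lie algebra $\mathfrak d_{\alpha,\gamma}(\mathfrak l,\mathfrak a)$ together with the subspace $\mathfrak l^*$ and does not refer to the derivation $D_{0,0}({D_\mathfrak l}_s,{D_\mathfrak a}_s)$. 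Since in our setting $\mathfrak a$ is a trivial $(\mathfrak l,D_\mathfrak l)$-module, the bracket of $\mathfrak d_{\alpha,\gamma}(\mathfrak l,\mathfrak a)$ has $\rho=0$, and then $\mathfrak d_{\alpha,\gamma}(\mathfrak l,\mathfrak a)$ is exactly the standard model of a nilpotent metric Lie algebra used in \cite{KO06,Ka07}; the conditions $(A_k)$, $(B_k)$, $k=0,\dots,m$, are literally the conditions imposed there to characterise $\mathfrak l^*=\mathfrak i$, so the lemma follows by citation. The rest of the sketch describes how that characterisation is obtained.

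First I would compute the lower central series of $\mathfrak g:=\mathfrak d_{\alpha,\gamma}(\mathfrak l,\mathfrak a)=\mathfrak l^*\oplus\mathfrak a\oplus\mathfrak l$ from the standard-model brackets with $\rho=0$: for $k\ge 1$ one gets $\mathfrak g^{k+1}\subseteq\mathfrak l^*\oplus\mathfrak a\oplus\mathfrak l^{k+1}$ with $\pi_\mathfrak l(\mathfrak g^{k+1})=\mathfrak l^{k+1}$ and $\pi_\mathfrak a(\mathfrak g^{k+1})=\alpha(\mathfrak l\wedge\mathfrak l^{k})$, while the $\mathfrak l^*$-part of $\mathfrak g^{k+1}$ is assembled from the terms $\gamma(\cdot,\cdot,\cdot)$, the terms $\langle\cdot,\alpha(\cdot,\cdot)\rangle_\mathfrak a$ coming from $[\mathfrak l,\mathfrak a]$, and the iterated coadjoint terms $\operatorname{ad}^*(\mathfrak l)$. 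I would also note that every orthogonal $(\mathfrak g^{j})^\bot$ is an ideal, since $\langle[\mathfrak g,(\mathfrak g^j)^\bot],\mathfrak g^j\rangle=\langle(\mathfrak g^j)^\bot,\mathfrak g^{j+1}\rangle=0$, hence each summand $\mathfrak g^{k+1}\cap(\mathfrak g^{k+1})^\bot$ of $\mathfrak i(\mathfrak g)$ is an ideal. Next, writing an element of $\mathfrak g^{k+1}\cap(\mathfrak g^{k+1})^\bot$ as $X=Z_X+A_X+L_0$ with $Z_X\in\mathfrak l^*$, $A_X\in\mathfrak a$, $L_0\in\mathfrak l^{k+1}$, and using that $\langle\cdot,\cdot\rangle$ pairs $\mathfrak l^*$ with $\mathfrak l$ and restricts to $\langle\cdot,\cdot\rangle_\mathfrak a$ on $\mathfrak a$, the two requirements ``$X\in\mathfrak g^{k+1}$'' and ``$X\perp\mathfrak g^{k+1}$'' decouple into a statement about $A_X$, which after matching of filtration degrees becomes the non-degeneracy conditions $(B_k)$ for the subspaces $\alpha(\ker[\cdot,\cdot]_{\mathfrak l\otimes\mathfrak l^{k+1}})$, and a statement about $L_0$, for which being in an ideal forces $L_0\in\mathfrak z(\mathfrak l)\cap\mathfrak l^{k+1}$ and which then reduces, with correction data $A_0=-A_X$, $Z_0=Z_X$, to the two displayed equations of $(A_k)$. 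Conversely, if some $(A_k)$ or $(B_k)$ fails, the same bookkeeping produces an element of $\mathfrak i(\mathfrak g)$ outside $\mathfrak l^*$, or a direction of $\mathfrak l^*$ missed by $\mathfrak i(\mathfrak g)$, so the extension is not balanced. Since $\mathfrak l^{m+2}=0$, only the layers $k=0,\dots,m$ contribute, and one concludes $\mathfrak i(\mathfrak g)=\mathfrak l^*$ if and only if $(A_k)$ and $(B_k)$ hold for all these $k$.

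The step I expect to be the main obstacle is this middle one, namely the bookkeeping of filtration degrees: for each summand $\mathfrak g^{k+1}\cap(\mathfrak g^{k+1})^\bot$ one has to identify precisely which pieces of $\mathfrak g^{k+1}$ must be annihilated and then separate components, and one cannot replace this by a dimension count, since $\mathfrak i(\mathfrak g)$ need not be comparable to $\mathfrak l^*$ at all; already the untwisted $T^*$-extension of the Heisenberg algebra has $\mathfrak i(\mathfrak g)\not\subseteq\mathfrak l^*$. The cochain identities $\rd\alpha=0$ and $\rd\gamma=\frac{1}{2}\langle\alpha\wedge\alpha\rangle$, which hold anyway because $\mathfrak g$ is a Lie algebra, play no further role; the substance is the explicit computation with the standard-model brackets and with the orthogonal complements $(\mathfrak g^{k+1})^\bot$.
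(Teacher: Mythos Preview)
Your proposal is correct and matches the paper's approach: the paper gives no proof of its own here but simply cites \cite{KO08} and \cite{Ka07}, relying (as you do) on the observation that balancedness is a condition on the metric Lie algebra $\mathfrak d_{\alpha,\gamma}(\mathfrak l,\mathfrak a)$ and the ideal $\mathfrak l^*$ alone and does not involve the derivation. You go further than the paper by sketching the mechanism behind the cited result, which is fine and accurate in outline; just be aware that this extra material is not needed for the paper's purposes, since the very next lemma (Lemma~\ref{lm:30}) makes the independence-of-derivation step explicit.
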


The property that a quadratic extension is balanced is a property of the corresponding metric Lie algebra and does not depend on the derivation.
Thus, we obtain the following lemma:

\begin{lemma}\label{lm:30}
Let $(\mathfrak d_{\alpha,\gamma}(\mathfrak l,\mathfrak a),D_{\delta,\epsilon}(D_\mathfrak l,D_\mathfrak a))$ be a nilpotent, metric Lie algebra with bijective derivation and let $(\alpha,\gamma,\delta,\epsilon)\in Z^2_{Q+}(\mathfrak l,D_\mathfrak l,\mathfrak a)$ be given.
The quadratic extension $(\mathfrak d_{\alpha,\gamma}(\mathfrak l,\mathfrak a),D_{\delta,\epsilon}(D_\mathfrak l,D_\mathfrak a))$ of a nilpotent Lie algebra $\mathfrak{l}$ with bijective derivation $D_\mathfrak l$ by a trivial $(\mathfrak l,D_\mathfrak l)$-module $(\mathfrak a,D_\mathfrak a)$ with bijective $D_\mathfrak a$ is balanced if and only if $(\alpha,\gamma,\delta,\epsilon)$ is an element in $Z^2_{Q+}(\mathfrak l,D_\mathfrak l,\mathfrak a)_b$.
\end{lemma}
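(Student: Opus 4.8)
The plan is to reduce Lemma \ref{lm:30} to the already-cited result that $(\mathfrak d_{\alpha,\gamma}(\mathfrak l,\mathfrak a),D_{0,0}({D_\mathfrak l}_s,{D_\mathfrak a}_s))$ defines a balanced quadratic extension precisely when $(\alpha,\gamma)\in Z^2_Q(\mathfrak l,\phi_\mathfrak l,\mathfrak a)_b$. The essential observation is the sentence stated just before the lemma: being balanced is a property of the underlying metric Lie algebra together with its canonical isotropic ideal $\mathfrak i(\mathfrak g)$ and the maps $i,p$, and it does not involve the derivation at all (the derivation only enters through $D$-invariance of $\mathfrak i$, which holds automatically for \emph{every} derivation by \cite[Lemma 3.4 (d)]{KO06}). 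So the first step is to record carefully that, given $(\alpha,\gamma,\delta,\epsilon)\in Z^2_{Q+}(\mathfrak l,D_\mathfrak l,\mathfrak a)$, by Theorem \ref{thm:3} the pair $(\mathfrak d_{\alpha,\gamma}(\mathfrak l,\mathfrak a),D_{\delta,\epsilon}(D_\mathfrak l,D_\mathfrak a))$ is a metric Lie algebra with skewsymmetric derivation, and its underlying metric Lie algebra is \emph{literally the same vector space $\mathfrak l^*\oplus\mathfrak a\oplus\mathfrak l$ with the same bracket and inner product} as the one underlying $(\mathfrak d_{\alpha,\gamma}(\mathfrak l,\mathfrak a),D_{0,0}({D_\mathfrak l}_s,{D_\mathfrak a}_s))$ — only the derivation changes.

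The second step is to observe that the quadratic extension data $(\mathfrak i,i,p)$ attached to both standard models is identical: in both cases $\mathfrak i=\mathfrak l^*$, with the same canonical embedding $i:\mathfrak a\to (\mathfrak l^*\oplus\mathfrak a\oplus\mathfrak l)/\mathfrak l^*\cong\mathfrak a\oplus\mathfrak l$ and canonical projection $p:\mathfrak a\oplus\mathfrak l\to\mathfrak l$, as set up in the discussion preceding Theorem \ref{thm:3}. Since the canonical isotropic ideal $\mathfrak i(\mathfrak g)$ is defined purely in terms of the lower central series and the inner product of $\mathfrak g$, it depends only on the metric Lie algebra $\mathfrak d_{\alpha,\gamma}(\mathfrak l,\mathfrak a)$ and not on $\delta,\epsilon$. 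Hence the condition ``$\mathfrak i=\mathfrak i(\mathfrak g)$'' — i.e. ``$\mathfrak l^*=\mathfrak i(\mathfrak d_{\alpha,\gamma})$'' — is one and the same condition for $(\mathfrak d_{\alpha,\gamma},D_{\delta,\epsilon})$ and for $(\mathfrak d_{\alpha,\gamma},D_{0,0})$. Therefore the quadratic extension $(\mathfrak d_{\alpha,\gamma}(\mathfrak l,\mathfrak a),D_{\delta,\epsilon}(D_\mathfrak l,D_\mathfrak a))$ is balanced if and only if $(\mathfrak d_{\alpha,\gamma}(\mathfrak l,\mathfrak a),D_{0,0}({D_\mathfrak l}_s,{D_\mathfrak a}_s))$ is balanced.

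The third step is to invoke the cited lemma directly: the latter holds if and only if $(\alpha,\gamma)\in Z^2_Q(\mathfrak l,\phi_\mathfrak l,\mathfrak a)_b$. By the definition of $Z^2_{Q+}(\mathfrak l,D_\mathfrak l,\mathfrak a)_b$, this is in turn equivalent to $(\alpha,\gamma,\delta,\epsilon)\in Z^2_{Q+}(\mathfrak l,D_\mathfrak l,\mathfrak a)_b$, which finishes the argument. One should check that the hypotheses of the cited lemma are met, namely that $\mathfrak d_{\alpha,\gamma}(\mathfrak l,\mathfrak a)$ is nilpotent with $D_{0,0}({D_\mathfrak l}_s,{D_\mathfrak a}_s)$ bijective; but the lemma's hypothesis is precisely that $(\mathfrak d_{\alpha,\gamma},D_{\delta,\epsilon})$ is nilpotent with bijective derivation, and nilpotency is again a property of the metric Lie algebra alone, while $D_{\delta,\epsilon}$ is bijective iff its semisimple part $D_{0,0}({D_\mathfrak l}_s,{D_\mathfrak a}_s)$ is (an endomorphism is invertible iff its semisimple part is), so the hypotheses transfer.

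I do not expect a serious obstacle here; this lemma is essentially a bookkeeping statement that transports the derivation-free result through Theorem \ref{thm:3}. The only point requiring a little care — and the place where one should be explicit — is the claim that the canonical isotropic ideal and the maps $i,p$ are genuinely unchanged when one replaces $D_{0,0}$ by $D_{\delta,\epsilon}$; this rests on the fact, stated in section \ref{MSLA:QuadrErw}, that $\mathfrak i(\mathfrak g)$ is $D$-invariant for \emph{every} derivation $D$ and is intrinsically defined by the lower central series and $\langle\cdot,\cdot\rangle$. Once that is spelled out, the equivalence chain ``balanced for $D_{\delta,\epsilon}$'' $\iff$ ``balanced for $D_{0,0}$'' $\iff$ ``$(\alpha,\gamma)\in Z^2_Q(\mathfrak l,\phi_\mathfrak l,\mathfrak a)_b$'' $\iff$ ``$(\alpha,\gamma,\delta,\epsilon)\in Z^2_{Q+}(\mathfrak l,D_\mathfrak l,\mathfrak a)_b$'' is immediate.
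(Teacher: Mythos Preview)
Your proposal is correct and follows essentially the same approach as the paper: the paper's entire argument is the single sentence ``The property that a quadratic extension is balanced is a property of the corresponding metric Lie algebra and does not depend on the derivation,'' immediately preceding the lemma, and then invokes the cited result for $(\mathfrak d_{\alpha,\gamma},D_{0,0})$. You have simply unpacked that sentence carefully, making explicit why $\mathfrak i(\mathfrak g)$, $i$, $p$ are unchanged and why the hypotheses transfer---which is more detail than the paper provides, but the same idea.
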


\section{Equivalence to the standard model}\label{MSLA:ÄquivStd}

The notation standard model for $(\mathfrak{d}_{\alpha,\gamma},D_{\delta,\epsilon})$ comes from the fact that every quadratic extension of metric Lie algebras with skewsymmetric derivations is equivalent to $(\mathfrak{d}_{\alpha,\gamma},D_{\delta,\epsilon})$ for a suitable $(\alpha,\gamma,\delta,\epsilon)\in Z^2_{Q+}(\mathfrak{l},D_\mathfrak{l},\mathfrak{a})$.
This, we will prove in this section.\\
\quad\\
Let $(\mathfrak g,D_s;\mathfrak i,i,p)$ be a quadratic extension of the metric Lie algebra with skewsymmetric semisimple derivation $(\mathfrak l,{D_\mathfrak l}_s)$ by an orthogonal $(\mathfrak{l},{D_\mathfrak{l}}_s)$-module $(\mathfrak a,{D_\mathfrak a}_s)$, where ${D_\mathfrak a}_s$ is semisimple.
The subspace $\mathfrak i$ is invariant under $D_s$, thus $\mathfrak i^\bot$ is also invariant.
Since $D_s$ is semisimple and $\mathfrak i$ isotropic, we can choose an isotropic complement of $\mathfrak i^\bot$, which is invariant under $D_s$.
Let $s:\mathfrak l\rightarrow V_\mathfrak l$ be a section with $\tilde p\circ s=\id$. Here $\tilde p:\mathfrak g\rightarrow\mathfrak l$ is the composition of the natural projection $\pi$ from $\mathfrak g$ to $\mathfrak g/\mathfrak i$ and $p$.
This section satisfies $D_s\circ s=s\circ{D_\mathfrak{l}}_s$ because of
\[ \tilde p\circ D_s\circ s=p\circ\pi\circ D_s\circ s=p\circ\overline D_s\circ\pi\circ s={D_\mathfrak l}_s\circ \tilde p\circ s=\tilde p\circ s\circ {D_\mathfrak l}_s.\]
Here we used that $\tilde p$ is a bijection between $\mathfrak l$ and $V_\mathfrak l$ and that $p$ is a homomorphism from $(\mathfrak g/\mathfrak i,\overline D_s)$ to $(\mathfrak l,{D_\mathfrak l}_s)$.
Let $V_\mathfrak a$ be an orthogonal complement of $\mathfrak i\oplus s(\mathfrak l)$ in $\mathfrak g$. We define $t:\mathfrak a\rightarrow V_\mathfrak a$ by
\[i(A)=t(A)+\mathfrak i\in \mathfrak g/\mathfrak i.\]
This map $t$ is an isometry, since $i:\mathfrak{a}\rightarrow \mathfrak{i^\bot/\mathfrak{i}}$ is one.
Moreover, we define an isomorphism $p^*:\mathfrak l^*\rightarrow\mathfrak i$ by
\[\langle p^*(Z),s(L)\rangle:=\langle Z,(\tilde p\circ s)(L)\rangle=Z(L)\]
for all $Z\in\mathfrak l^*$ and $L\in\mathfrak l$.
Then we define $\alpha\in C^2(\mathfrak l,\mathfrak a)$ and $\gamma\in C^3(\mathfrak l)$ by
\begin{align}
i(\alpha(L_1,L_2)):&=[sL_1,sL_2]-s[L_1,L_2]+\mathfrak i\in \mathfrak g/\mathfrak i,\label{eq:alphadef}\\
\gamma(L_1,L_2,L_3):&=\langle[s(L_1),s(L_2)],s(L_3)\rangle.\label{eq:gammadef}
\end{align}

\begin{proposition}(\cite{KO08}, see also \cite[Lemma 2.8 and proof]{KO06}) \label{prop:2}
It holds $(\alpha,\gamma)\in Z^2_{Q}(\mathfrak l,\phi_\mathfrak l,\mathfrak a)$.
Moreover, the map $\varphi:=p^*+t+s:\mathfrak{l}^*\oplus\mathfrak{a}\oplus\mathfrak{l}\rightarrow\mathfrak{g}$ is an equivalence of the quadratic extensions $(\mathfrak d_{\alpha,\gamma}(\mathfrak l,\mathfrak a),D_{0,0}({D_\mathfrak{l}}_s,{D_\mathfrak{a}}_s))$ and $(\mathfrak g, D_s;\mathfrak i, i,p)$.
\end{proposition}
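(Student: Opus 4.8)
The plan is to verify the two assertions of Proposition~\ref{prop:2} in turn: first that the pair $(\alpha,\gamma)$ built in \eqref{eq:alphadef} and \eqref{eq:gammadef} lands in $Z^2_Q(\mathfrak l,\phi_\mathfrak l,\mathfrak a)$, and second that $\varphi=p^*+t+s$ is an equivalence of quadratic extensions. For the first assertion I would compute the three defining conditions of $Z^2_Q$. The condition $\rd\alpha=0$ follows from the Jacobi identity in $\mathfrak g/\mathfrak i$ applied to $sL_1,sL_2,sL_3$ together with \eqref{eq:DefRho}, since the $\rho$-terms in $\rd\alpha$ arise exactly from the brackets $[sL_i,i(A)]$. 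The condition $\rd\gamma=\tfrac12\langle\alpha\wedge\alpha\rangle$ follows from the Jacobi identity in $\mathfrak g$ paired against $s(L_4)$: here one uses that $\langle\cdot,\cdot\rangle$ restricted to $s(\mathfrak l)$ is zero (isotropic complement), that $i(\mathfrak a)$ is orthogonal to $s(\mathfrak l)$ modulo $\mathfrak i$, and that $i$ is an isometry onto $\mathfrak i^\bot/\mathfrak i$, so the pairing of two $\alpha$-terms reproduces $\langle\alpha\wedge\alpha\rangle$. The two conditions $D_s^\circ\alpha=0$ and $D_s^\circ\gamma=0$ follow from the invariance identities $D_s\circ s=s\circ{D_\mathfrak l}_s$ (established in the text) and $t\circ{D_\mathfrak a}_s=\overline D_s\circ t$, applied inside \eqref{eq:alphadef}, \eqref{eq:gammadef}, together with skew-symmetry of $D_s$ against $\langle\cdot,\cdot\rangle$. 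Much of this is already contained in \cite{KO06}; I would cite that and only indicate the adaptation.

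For the second assertion I would check that $\varphi$ is (i) a linear isomorphism, (ii) an isometry, (iii) a Lie algebra homomorphism, (iv) intertwines $D_{0,0}({D_\mathfrak l}_s,{D_\mathfrak a}_s)$ with $D_s$, and (v) respects $\mathfrak i$, $i$, $p$ in the sense of the definition of equivalence. Point (i) is immediate since $p^*$ is an isomorphism onto $\mathfrak i$, $s$ is injective with $s(\mathfrak l)$ a complement to $\mathfrak i^\bot$, and $t$ maps $\mathfrak a$ isometrically onto a complement $V_\mathfrak a$ of $\mathfrak i\oplus s(\mathfrak l)$; a dimension count then gives bijectivity. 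Point (ii) is the computation $\langle\varphi(Z_1+A_1+L_1),\varphi(Z_2+A_2+L_2)\rangle=Z_1(L_2)+\langle A_1,A_2\rangle_\mathfrak a+Z_2(L_1)$, which matches the standard model's form: the $p^*$--$p^*$ block vanishes because $\mathfrak i$ is isotropic, the $p^*$--$s$ block gives $Z(L)$ by the defining property of $p^*$, the $s$--$s$ block vanishes by isotropy of the chosen complement, the $t$--$t$ block is $\langle A_1,A_2\rangle_\mathfrak a$ since $i$ (hence $t$ modulo $\mathfrak i$) is an isometry onto $\mathfrak i^\bot/\mathfrak i$ and $\mathfrak i$ is isotropic, and the $t$--$s$ and $t$--$p^*$ blocks vanish because $V_\mathfrak a\perp(\mathfrak i\oplus s(\mathfrak l))$. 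Point (iii) amounts to checking $\varphi$ on all five bracket types in the Example defining $\mathfrak d_{\alpha,\gamma}$: the $[L_1,L_2]$ bracket is exactly \eqref{eq:alphadef}--\eqref{eq:gammadef} together with the fact that the $\mathfrak i$-component of $[sL_1,sL_2]$ is recovered by pairing against $s(L_3)$ and using $p^*$; the $[L,A]$ bracket uses \eqref{eq:DefRho} and the identity $\langle[sL,t(A)],s(\cdot)\rangle=\langle t(A),[s(\cdot),sL]\rangle$ via ad-invariance, which produces the $-\langle A,\alpha(L,\cdot)\rangle$ term; the $[A_1,A_2]$ bracket lies in $\mathfrak i$ and is again read off by pairing with $s(\cdot)$ and invoking ad-invariance; the $[L,Z]$ bracket is $\mathrm{ad}^*(L)$ by the defining property of $p^*$ and $\tilde p\circ s=\id$; and $[\mathfrak l^*,\mathfrak l^*\oplus\mathfrak a]=0$ because $\mathfrak i=p^*(\mathfrak l^*)$ is contained in the kernel of ad restricted to $\mathfrak i^\bot$ (as $\mathfrak i^\bot/\mathfrak i$ is abelian and $\mathfrak i$ is central in $\mathfrak i^\bot$, $[\mathfrak i,\mathfrak i^\bot]\subset\mathfrak i\cap[\mathfrak g,\mathfrak i^\bot]$, which pairs trivially). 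Point (iv) follows block by block from $D_s s=s{D_\mathfrak l}_s$, $D_s p^*=-p^*{D_\mathfrak l}_s^*$ (dual of the previous, using skew-symmetry of $D_s$), and $\overline D_s t=t{D_\mathfrak a}_s$. Point (v) is built in: $\varphi(\mathfrak l^*)=p^*(\mathfrak l^*)=\mathfrak i$, and the induced map on quotients sends the standard $i$ to the given $i$ (by definition of $t$) and intertwines the projections (by $\tilde p\circ s=\id$).

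The main obstacle I expect is bookkeeping rather than conceptual: correctly tracking the $\mathfrak l^*$-components of the brackets, which are only defined implicitly by pairing against $s(\mathfrak l)$, and making sure the ad-invariance manipulations in the $[L,A]$ and $[A_1,A_2]$ cases produce precisely the signs appearing in the Example. One subtle point worth stating carefully is why $[sL_1,sL_2]$ has no $V_\mathfrak a$-component other than the one prescribed by $\alpha$ and no $\mathfrak i$-component other than the one prescribed by $\gamma$: the former because $i(\alpha(L_1,L_2))$ by definition captures the full class of $[sL_1,sL_2]-s[L_1,L_2]$ in $\mathfrak g/\mathfrak i$ and this class lies in $i(\mathfrak a)=\mathfrak i^\bot/\mathfrak i$ (one must check $[sL_1,sL_2]-s[L_1,L_2]\in\mathfrak i^\bot$, which holds since $\gamma$, defined by pairing with $s(\mathfrak l)$, is the only obstruction and it has been split off into the $\mathfrak i$-part), the latter because $\gamma(L_1,L_2,\cdot)$ is by construction the functional $s(L)\mapsto\langle[sL_1,sL_2],s(L)\rangle$ which via $p^*$ is exactly the $\mathfrak i$-component. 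Since all of this is carried out in \cite{KO06} for the underlying metric Lie algebra, the real content here is only the compatibility with the derivations, which reduces to the intertwining relations already recorded above; I would therefore present the proof as a reference to \cite{KO08,KO06} for the metric part supplemented by the short derivation checks.
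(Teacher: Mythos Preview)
Your proposal is correct and matches the paper's treatment: the paper does not supply its own proof of this proposition but simply imports it from \cite{KO08} and \cite[Lemma~2.8]{KO06}, exactly as you conclude in your final paragraph. The detailed verification you sketch (cocycle conditions via Jacobi and ad-invariance, isometry and bracket checks for $\varphi$, and the intertwining $D_s\circ s=s\circ{D_{\mathfrak l}}_s$, $\overline D_s\circ t=t\circ{D_{\mathfrak a}}_s$ for the derivation part) is precisely the content of those references, so nothing further is needed beyond the citation.

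One small cleanup: your argument for $[\mathfrak i,\mathfrak i^\bot]=0$ is slightly elliptic. The clean way is to note that $\mathfrak i^\bot$ is an ideal (dual to $\mathfrak i$ via ad-invariance), so $\langle[\mathfrak i,\mathfrak i^\bot],\mathfrak g\rangle=\langle\mathfrak i,[\mathfrak i^\bot,\mathfrak g]\rangle\subset\langle\mathfrak i,\mathfrak i^\bot\rangle=0$, whence $[\mathfrak i,\mathfrak i^\bot]=0$ by nondegeneracy; this is what you need for the $[\mathfrak l^*,\mathfrak l^*\oplus\mathfrak a]=0$ check.
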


\begin{proposition} \label{prop:existstndrdmdll}
Let $(\mathfrak g,D;\mathfrak i,i,p)$ be a quadratic extension of $(\mathfrak l,D_\mathfrak l)$ by $(\mathfrak a,D_\mathfrak a)$.
Then there is an $(\alpha,\gamma,\delta,\epsilon)\in Z^2_{Q+}(\mathfrak l,D_\mathfrak l,\mathfrak a)$ such that the quadratic extension $(\mathfrak d_{\alpha,\gamma}(\mathfrak l,\mathfrak a),D_{\delta,\epsilon})$ is equivalent to $(\mathfrak g, D;\mathfrak i, i,p)$.
In addition $(\alpha,\gamma,\delta,\epsilon)$ is an element in $Z^2_{Q+}(\mathfrak l,D_\mathfrak l,\mathfrak a)_b$ for an balanced quadratic extension.
\end{proposition}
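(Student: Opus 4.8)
The plan is to reuse the equivalence produced by Proposition \ref{prop:2} and to transport the full derivation $D$ through it. First I would apply Proposition \ref{prop:2} to the semisimple data $(\mathfrak g,D_s;\mathfrak i,i,p)$. This produces a pair $(\alpha,\gamma)\in Z^2_Q(\mathfrak l,\phi_\mathfrak l,\mathfrak a)$ (built from a $D_s$-invariant isotropic complement, a $D_s$-equivariant section $s\colon\mathfrak l\to\mathfrak g$, an isometry $t\colon\mathfrak a\to V_\mathfrak a$ and the isomorphism $p^*\colon\mathfrak l^*\to\mathfrak i$) together with an equivalence of quadratic extensions $\varphi=p^*+t+s\colon\mathfrak l^*\oplus\mathfrak a\oplus\mathfrak l\to\mathfrak g$ between $(\mathfrak d_{\alpha,\gamma}(\mathfrak l,\mathfrak a),D_{0,0}({D_\mathfrak l}_s,{D_\mathfrak a}_s))$ and $(\mathfrak g,D_s;\mathfrak i,i,p)$. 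In particular $\varphi$ is an isometry with $\varphi(\mathfrak l^*)=\mathfrak i$ and $\varphi(\mathfrak l^*\oplus\mathfrak a)=\mathfrak i^\perp$, it satisfies $\bar\varphi\circ i=i$ and $p\circ\bar\varphi=p$, and $\varphi^{-1}D_s\varphi=D_{0,0}({D_\mathfrak l}_s,{D_\mathfrak a}_s)$. Then I would set $\tilde D:=\varphi^{-1}D\varphi$, which is automatically a bijective skewsymmetric derivation of $\mathfrak d_{\alpha,\gamma}(\mathfrak l,\mathfrak a)$.

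The heart of the argument is to show $\tilde D=D_{\delta,\epsilon}(D_\mathfrak l,D_\mathfrak a)$ for suitable $\delta\in C^1(\mathfrak l,\mathfrak a)$ and $\epsilon\in C^2(\mathfrak l)$. Since $\mathfrak i$ is a $D$-invariant ideal and $D$ is skewsymmetric, $\mathfrak i^\perp$ is $D$-invariant too, so $\tilde D$ respects the flag $\mathfrak l^*\subset\mathfrak l^*\oplus\mathfrak a\subset\mathfrak l^*\oplus\mathfrak a\oplus\mathfrak l$ and is therefore block upper triangular. Its diagonal blocks are forced to be $-D_\mathfrak l^*$ on $\mathfrak l^*\cong\mathfrak i$ (using that $\mathfrak g/\mathfrak i^\perp\cong\mathfrak l$ carries $D_\mathfrak l$, that $\langle p^*(Z),s(L)\rangle=Z(L)$, and that $\langle\mathfrak i,\mathfrak i^\perp\rangle=0$), $D_\mathfrak a$ on $\mathfrak a\cong\mathfrak i^\perp/\mathfrak i$ (since $i$ intertwines $D_\mathfrak a$ with $\overline D$), and $D_\mathfrak l$ on $\mathfrak l$ (since $p$ intertwines $\overline D$ with $D_\mathfrak l$). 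Naming $\delta\colon\mathfrak l\to\mathfrak a$ the block of $\tilde D$ mapping $\mathfrak l$ into $\mathfrak a$, I would then feed the skewsymmetry identity $\langle\tilde DX,Y\rangle=-\langle X,\tilde DY\rangle$ into the explicit standard-model inner product $\langle Z_1+A_1+L_1,Z_2+A_2+L_2\rangle=Z_1(L_2)+\langle A_1,A_2\rangle_\mathfrak a+Z_2(L_1)$, first with $X\in\mathfrak a$, $Y\in\mathfrak l$ and then with $X,Y\in\mathfrak l$; this pins the $\mathfrak a\to\mathfrak l^*$ block to $-\delta^*$ and the $\mathfrak l\to\mathfrak l^*$ block to $\overline{\epsilon}$ for a skewsymmetric $\epsilon\in C^2(\mathfrak l)$, so that $\tilde D=D_{\delta,\epsilon}(D_\mathfrak l,D_\mathfrak a)$. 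The hard part will be exactly this bookkeeping — pinning down every block of $\tilde D$ and checking that the off-diagonal ones have precisely the shape hard-wired into $D_{\delta,\epsilon}$ — which is where the $D$-invariance of $\mathfrak i$ and $\mathfrak i^\perp$, the equivariance of $i$ and $p$, and the skewsymmetry of $D$ all get used.

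With $\tilde D=D_{\delta,\epsilon}(D_\mathfrak l,D_\mathfrak a)$ in hand the rest is quick. Since $D_s$ is the semisimple part of $D$ and conjugation by $\varphi$ preserves the Jordan decomposition, $D_{0,0}({D_\mathfrak l}_s,{D_\mathfrak a}_s)=\varphi^{-1}D_s\varphi$ is the semisimple part of $\tilde D=D_{\delta,\epsilon}(D_\mathfrak l,D_\mathfrak a)$, so Theorem \ref{thm:3} gives $(\alpha,\gamma,\delta,\epsilon)\in Z^2_{Q+}(\mathfrak l,D_\mathfrak l,\mathfrak a)$; and by construction $\varphi$ conjugates $D_{\delta,\epsilon}$ to $D$ while still meeting $\varphi(\mathfrak l^*)=\mathfrak i$, $\bar\varphi\circ i=i$, $p\circ\bar\varphi=p$, so it is an equivalence of the quadratic extensions $(\mathfrak d_{\alpha,\gamma}(\mathfrak l,\mathfrak a),D_{\delta,\epsilon})$ and $(\mathfrak g,D;\mathfrak i,i,p)$. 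Finally, if the given extension is balanced then $\mathfrak i=\mathfrak i(\mathfrak g)$, and since $\varphi$ is an isometry of nilpotent metric Lie algebras it carries $\mathfrak i(\mathfrak d_{\alpha,\gamma})$ onto $\mathfrak i(\mathfrak g)=\mathfrak i=\varphi(\mathfrak l^*)$; hence $\mathfrak i(\mathfrak d_{\alpha,\gamma})=\mathfrak l^*$, the standard-model extension is balanced, and Lemma \ref{lm:30} yields $(\alpha,\gamma,\delta,\epsilon)\in Z^2_{Q+}(\mathfrak l,D_\mathfrak l,\mathfrak a)_b$.
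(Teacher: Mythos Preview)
Your proposal is correct and follows essentially the same route as the paper: apply Proposition~\ref{prop:2} to the semisimple data to get $(\alpha,\gamma)$ and the isometry $\varphi=p^*+t+s$, transport $D$ through $\varphi$, identify the resulting map as $D_{\delta,\epsilon}(D_\mathfrak l,D_\mathfrak a)$ using the flag $\mathfrak l^*\subset\mathfrak l^*\oplus\mathfrak a$, the equivariance of $i,p$ and the skewsymmetry of $D$, and then invoke Theorem~\ref{thm:3} and Lemma~\ref{lm:30}. The only cosmetic difference is that the paper writes down $\delta$ and $\epsilon$ first via the formulas $\langle A,\delta(L)\rangle_\mathfrak a=\langle t(A),D(s(L))\rangle$ and $\epsilon(L_1,L_2)=\langle D(s(L_1)),s(L_2)\rangle$ and then checks $\varphi\circ D_{\delta,\epsilon}=D\circ\varphi$, whereas you conjugate first and read off the blocks; one minor slip is that $\tilde D$ need not be \emph{bijective} in the general (non-balanced) part of the statement, but this is irrelevant to the argument.
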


\begin{proof}
We choose $(\alpha,\gamma)\in Z^2_Q(\mathfrak l,\phi_\mathfrak l,\mathfrak a)$ by equations (\ref{eq:alphadef}) and (\ref{eq:gammadef}) with respect to the quadratic extension $(\mathfrak{g},D_s;\mathfrak{i},i,p)$ of $(\mathfrak{l},{D_\mathfrak l}_s)$ by $(\mathfrak a,{D_\mathfrak a}_s)$.
Here $D_s$ denotes the semisimple part of the Jordan decomposition of the derivation $D$ (analogous ${D_\mathfrak{l}}_s$ and ${D_\mathfrak{a}}_s$ for $D_\mathfrak{l}$ and $D_\mathfrak{a}$).
Proposition \ref{prop:2} says that $\varphi:=p^*+t+s:\mathfrak l^*\oplus\mathfrak a\oplus\mathfrak l\rightarrow\mathfrak g$ is an equivalence of quadratic extensions with semisimple parts of the derivations of $(\mathfrak d_{\alpha,\gamma},D_{0,0}({D_\mathfrak l}_s,{D_\mathfrak a}_s),\mathfrak l^*,i,p)$ to $(\mathfrak g, D_s,\mathfrak i, i,p)$.
Now, we set
\begin{align}
\langle A,\delta(L)\rangle_\mathfrak a:&=\langle t(A),D(s(L))\rangle\label{eq:deltadef},\\
\epsilon(L_1,L_2):&=\langle D(s(L_1)),s(L_2)\rangle.\label{eq:epsilondef}
\end{align}
Since $\varphi$ is an isometry and $D$ and $D_{\delta,\epsilon}$ are skewsymmetric, we get that $\varphi\circ D_{\delta,\epsilon}=D\circ\varphi$ holds if and only if
\begin{align}
 \langle(D\circ\varphi)(L),X\rangle&= \langle(\varphi\circ D_{\delta,\epsilon})(L),X\rangle,\\
 \langle(D\circ\varphi)(A_1),t(A_2)\rangle&= \langle(\varphi\circ D_{\delta,\epsilon})(A_1),t(A_2)\rangle 
\end{align}
is satisfied for all $L\in\mathfrak{l}$, $X\in\mathfrak{g}$ and $A_1,A_2\in\mathfrak{a}$.
Because of 
\[(\tilde p\circ D\circ t)(A_1)=(p\circ\pi\circ D\circ t)(A_1)=(p\circ \overline D\circ i)(A_1)=(D_\mathfrak{l}\circ p\circ i)(A_1)=0\]
the element $(D\circ t)(A_1)$ lies in $V_\mathfrak{a}\oplus\mathfrak{i}=\mathfrak{i}^\bot$ for all $A_1\in\mathfrak{a}$ and we obtain
\begin{align*}
 \langle(D\circ\varphi)(A_1),t(A_2)\rangle&=\langle(D\circ t)(A_1),t(A_2)\rangle=\langle(\pi\circ D\circ t)(A_1),i(A_2)\rangle\\
 &=\langle(\overline D\circ i)(A_1),i(A_2)\rangle_{\mathfrak{g}/\mathfrak{i}}=\langle(i\circ D_\mathfrak{a})(A_1),i(A_2)\rangle_{\mathfrak{g}/\mathfrak{i}}\\
 &=\langle(t\circ D_\mathfrak{a})(A_1),t(A_2)\rangle=\langle(\varphi\circ D_{\delta,\epsilon})(A_1),t(A_2)\rangle.
\end{align*}
Here we used that $i$ is a homomorphism of Lie algebras with derivations from $(\mathfrak{a},D_\mathfrak{a})$ to $(\mathfrak{g}/\mathfrak{i},\overline D)$.
Using
\[(\tilde p\circ D\circ s)(L_1)=(p\circ\overline D\circ \pi\circ s)(L_1)=(D_\mathfrak{l}\circ\tilde p\circ s)(L_1)=D_\mathfrak{l}(L_1)\]
for all $L_1\in\mathfrak{l}$, we get furthermore
\begin{align*}
 \langle(\varphi\circ D_{\delta,\epsilon})(L_1),\,&p^*(Z_2)+t(A_2)+s(L_2)\rangle\\
 &= \langle p^*\overline \epsilon(L_1),s(L_2)\rangle+\langle(t\circ \delta)(L_1),t(A_2)\rangle+\langle(s\circ D_\mathfrak{l})(L_1),p^*(Z_2)\rangle\\ &=\epsilon(L_1,L_2)+\langle\delta(L_1),A_2\rangle_\mathfrak{a}+Z_2(D_\mathfrak{l}(L_1))\\
 &=\langle(D\circ s)(L_1),p^*(Z_2)+t(A_2)+s(L_2)\rangle
\end{align*}
for all $L_2\in\mathfrak{l}$, $A_2\in\mathfrak{a}$ and $Z_2\in\mathfrak{l}^*$.

Finally, $\varphi$ is an isomorphism from $(\mathfrak{d}_{\alpha,\gamma}(\mathfrak{l},\mathfrak{a}),D_{\delta,\epsilon}(D_\mathfrak{l},D_\mathfrak{a}))$ to $(\mathfrak{g},D)$, which maps \linebreak $D_{0,0}({D_\mathfrak{l}}_s,{D_\mathfrak{a}}_s)$ to the semisimple part of $D$.
Thus $(\mathfrak{d}_{\alpha,\gamma},D_{\delta,\epsilon}(D_\mathfrak{l},D_\mathfrak{a}))$ is a skewsymmetric derivation, whose semisimple part equals $D_{0,0}({D_\mathfrak l}_s,{D_\mathfrak a}_s)$.
So $(\alpha,\gamma,\delta,\epsilon)\in Z^2_{Q+}(\mathfrak{l},D_\mathfrak{l},\mathfrak{a})$ and $\varphi$ an equivalence of quadratic extensions, because of Theorem \ref{thm:3}.

If $(\mathfrak{g},\mathfrak{i},i,p)$ is balanced, then $(\mathfrak{d}_{\alpha,\gamma},D_{\delta,\epsilon})$ is balanced, because of $\varphi(\mathfrak{l}^*)=\mathfrak{i}$.
Using Lemma \ref{lm:30}, we obtain that the cocycle $(\alpha,\gamma,\delta,\epsilon)$ is balanced.
\end{proof}

\section{Equivalence classes of quadratic extensions}\label{MSLA:Äquivkl}
The task of this section is to prove the bijection between $H^2_{Q+}(\mathfrak l,D_\mathfrak l,\mathfrak a)$ and the equivalence classes of quadratic extensions of $(\mathfrak l,D_\mathfrak l)$ by $\mathfrak a$.

\begin{lemma}(\cite{KO08}, compare with the prove of Lemma 2.9 in \cite{KO06}) \label{lm:3}
Let $(\mathfrak{l},{D_\mathfrak l}_s)$ be a Lie algebra with semisimple derivation and $(\mathfrak{a},{D_\mathfrak a}_s)$ an orthogonal $(\mathfrak l,{D_\mathfrak{l}}_s)$-module, where ${D_\mathfrak{l}}_s$ is semisimple.
Assume $(\alpha_i,\gamma_i)\in Z^2_{Q+}(\mathfrak l,\phi_\mathfrak l,\mathfrak a)$ for $i=1,2$.
An isomorphism $F:\mathfrak d_{\alpha_1,\gamma_1}(\mathfrak{l},\mathfrak{a})\rightarrow\mathfrak d_{\alpha_2,\gamma_2}(\mathfrak{l},\mathfrak{a})$ of Lie algebras is an equivalence of the quadratic extensions $(\mathfrak d_{\alpha_i,\gamma_i}(\mathfrak l,\mathfrak a),D_{0,0}({D_\mathfrak l}_s,{D_\mathfrak a}_s))$ if and only if
\[F=\begin{pmatrix}\id&-\tau^*&\bar\sigma-\frac{1}{2}\tau^*\tau\\&\id&\tau\\&&\id\end{pmatrix}\]
for a $(\tau,\sigma)\in C^1(\mathfrak{l},\phi_\mathfrak{l},\mathfrak{a})$ satisfying $\sigma(\cdot,\cdot)=\langle\bar\sigma(\cdot),\cdot\rangle$ and $(\alpha_1,\gamma_1)=(\alpha_2,\gamma_2)(\tau,\sigma)$.
Here $\tau^*:\mathfrak a\rightarrow\mathfrak l^*$ denotes the dual map of $\tau$ defined by $\langle\tau^* A,L\rangle=\langle A,\tau L\rangle$ for $A\in\mathfrak a$ and $L\in\mathfrak l$.
\end{lemma}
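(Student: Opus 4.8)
The plan is to treat this as the equivariant counterpart of Lemma 2.9 in \cite{KO06} (see also \cite{KO08}) and to isolate the one genuinely new ingredient --- compatibility with the derivations --- from the purely Lie-theoretic bookkeeping, which can be quoted from those references. First I would unwind what ``equivalence of the quadratic extensions'' means here: since in both standard models the isotropic ideal is $\mathfrak l^*$, the embedding is the canonical $i\colon\mathfrak a\to\mathfrak a\oplus\mathfrak l$ and the projection the canonical $p\colon\mathfrak a\oplus\mathfrak l\to\mathfrak l$, an isomorphism $F$ of the underlying Lie algebras is an equivalence if and only if it maps $\mathfrak l^*$ to $\mathfrak l^*$, is an isometry, intertwines the two copies of $D_{0,0}({D_\mathfrak l}_s,{D_\mathfrak a}_s)$, and the induced map on $\mathfrak a\oplus\mathfrak l$ restricts to $\id$ on $\mathfrak a$ and projects to $\id$ on $\mathfrak l$.

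Next I would read off the matrix shape of $F$ from the conditions not involving the bracket, exactly as in \cite{KO06}. From $\bar F\circ i=i$ and $p\circ\bar F=p$ one gets $F(A)=A+\mu(A)$ with $\mu(A)\in\mathfrak l^*$, $F(L)=\nu(L)+\tau(L)+L$ with $\nu(L)\in\mathfrak l^*$ and $\tau\in C^1(\mathfrak l,\mathfrak a)$, and $F(\mathfrak l^*)\subset\mathfrak l^*$. Testing the isometry condition on the three surviving pairings then gives $F|_{\mathfrak l^*}=\id$ (from $\langle F(Z),F(L)\rangle=Z(L)$), $\mu=-\tau^*$ (from $\langle F(L),F(A)\rangle=0$) and $\nu=\bar\sigma-\frac{1}{2}\tau^*\tau$ with $\sigma\in C^2(\mathfrak l)$, $\sigma(\cdot,\cdot)=\langle\bar\sigma(\cdot),\cdot\rangle$ (from $\langle F(L_1),F(L_2)\rangle=0$), which is exactly the form of $F$ displayed in the statement; conversely every such $F$ is an isometry with these three properties. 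Comparing $F\circ D_{0,0}({D_\mathfrak l}_s,{D_\mathfrak a}_s)$ with $D_{0,0}({D_\mathfrak l}_s,{D_\mathfrak a}_s)\circ F$ --- the $\mathfrak l^*$-part is automatic since $F|_{\mathfrak l^*}=\id$, the $\mathfrak a$- and $\mathfrak l$-parts are computed directly, and in the $\mathfrak l^*$-component on $\mathfrak l$ the $\frac{1}{2}\tau^*\tau$-contributions cancel by the dual of the relation $D_s^\circ\tau=0$ together with the skewsymmetry of ${D_\mathfrak a}_s$ --- shows that $F$ intertwines the two derivations if and only if $D_s^\circ\tau=0$ and $D_s^\circ\sigma=0$, i.e. $(\tau,\sigma)\in C^1_Q(\mathfrak l,\phi_\mathfrak l,\mathfrak a)$.

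Finally, with the shape of $F$ fixed, $F$ is a homomorphism $\mathfrak d_{\alpha_1,\gamma_1}(\mathfrak l,\mathfrak a)\to\mathfrak d_{\alpha_2,\gamma_2}(\mathfrak l,\mathfrak a)$ if and only if $F[X,Y]_1=[F(X),F(Y)]_2$ on all pairs of generators. The relations $[\mathfrak l^*,\cdot]$, $[A_1,A_2]$ and $[L,Z]$ are automatically preserved; the $\mathfrak l^*$-component of $F[L,A]_1=[F(L),F(A)]_2$ yields $\alpha_1=\alpha_2+\rd\tau$ (using the skewsymmetry of $\rho$ and the degree-one formula for $\rd$), and the $\mathfrak a$- and $\mathfrak l^*$-components of $F[L_1,L_2]_1=[F(L_1),F(L_2)]_2$ give $\alpha_1=\alpha_2+\rd\tau$ once more and $\gamma_1=\gamma_2+\rd\sigma+\langle(\alpha_2+\frac{1}{2}\rd\tau)\wedge\tau\rangle$, that is $(\alpha_1,\gamma_1)=(\alpha_2,\gamma_2)(\tau,\sigma)$; conversely these two identities make $F$ a homomorphism. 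This last component-wise bracket computation, carried out as in \cite{KO06} and \cite{KO08} for the non-equivariant case, is the main bookkeeping step and the place where errors are most likely, whereas the conditions $D_s^\circ\tau=0$ and $D_s^\circ\sigma=0$ coming from the derivation are the only genuinely new content.
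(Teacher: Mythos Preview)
Your proof is correct. The paper itself does not give a proof of this lemma: it simply cites \cite{KO08} and points to the proof of Lemma~2.9 in \cite{KO06}. What you have written is precisely a sketch of that cited argument --- derive the block--upper--triangular shape of $F$ from the isometry condition together with $\bar F\circ i=i$ and $p\circ\bar F=p$, then compute $F[X,Y]_1=[FX,FY]_2$ on the various types of generators to obtain $(\alpha_1,\gamma_1)=(\alpha_2,\gamma_2)(\tau,\sigma)$ --- and you have correctly isolated the only extra ingredient in the equivariant setting, namely that $F\circ D_{0,0}=D_{0,0}\circ F$ is equivalent to $D_s^\circ\tau=0$ and $D_s^\circ\sigma=0$ (using skewsymmetry of ${D_\mathfrak a}_s$ to see that the $\tau^*\tau$--term takes care of itself once $D_s^\circ\tau=0$). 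So your approach coincides with the paper's intended one, just spelled out in full.
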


\begin{lemma}\label{lemma:aequivstndrdmdll}
Let us suppose that $(\alpha_i,\gamma_i,\delta_i,\epsilon_i)\in Z^2_{Q+}(\mathfrak l,D_\mathfrak l,\mathfrak a)$ for $i=1,2$.
The quadratic extensions $(\mathfrak d_{\alpha_i,\gamma_i},D_{\delta_i,\epsilon_i})$ are equivalent if and only if
$[\alpha_1,\gamma_1,\delta_1,\epsilon_1]=[\alpha_2,\gamma_2,\delta_2,\epsilon_2]\in H^2_{Q+}(\mathfrak l,D_\mathfrak l,\mathfrak a)$.
\end{lemma}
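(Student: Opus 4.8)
The plan is to prove both directions by translating an equivalence of standard models into the data of a pair $(\tau,\sigma)$, exactly as in Lemma \ref{lm:3}, and then checking what the extra compatibility with the derivations $D_{\delta_i,\epsilon_i}$ forces on $(\tau,\sigma)$. First I would treat the direction ``equal cohomology class $\Rightarrow$ equivalent''. So suppose $(\alpha_1,\gamma_1,\delta_1,\epsilon_1)=(\alpha_2,\gamma_2,\delta_2,\epsilon_2)(\tau,\sigma)$ for some $(\tau,\sigma)\in C^1_Q(\mathfrak l,\phi_\mathfrak l,\mathfrak a)$, using the right action (\ref{Rechtswirkung}). Then in particular $(\alpha_1,\gamma_1)=(\alpha_2,\gamma_2)(\tau,\sigma)$ with respect to the action (\ref{Rechtswirkung1}), so by Lemma \ref{lm:3} the block-upper-triangular map
\[
F=\begin{pmatrix}\id&-\tau^*&\bar\sigma-\tfrac12\tau^*\tau\\&\id&\tau\\&&\id\end{pmatrix}
\]
is an isomorphism of Lie algebras $\mathfrak d_{\alpha_1,\gamma_1}\to\mathfrak d_{\alpha_2,\gamma_2}$ which is an isometry, maps $\mathfrak l^*$ to $\mathfrak l^*$, and induces the identity on $\mathfrak a\oplus\mathfrak l=\mathfrak d/\mathfrak l^*$; hence it already satisfies $\bar F\circ i_1=i_2$ and $p_2\circ\bar F=p_1$. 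The only thing left to verify is $F\circ D_{\delta_1,\epsilon_1}=D_{\delta_2,\epsilon_2}\circ F$, i.e. that $F$ also intertwines the derivations. This is a direct matrix computation: writing out both products of $3\times 3$ block matrices and comparing entries, the off-diagonal blocks produce precisely the identities $\delta_1=\delta_2+D^\circ\tau$ and $\epsilon_1=\epsilon_2+D^\circ\sigma+\langle(\delta_2+\tfrac12 D^\circ\tau)\wedge\tau\rangle$ (after identifying $\bar\epsilon$ with $\epsilon$ and using $\langle\delta^*A,L\rangle=\langle A,\delta L\rangle$), which are exactly the components of $(\delta_1,\epsilon_1)=(\delta_2,\epsilon_2)(\tau,\sigma)$ under the action (\ref{Rechtswirkung2}). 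Thus $F$ is an equivalence of quadratic extensions.

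For the converse, suppose $(\mathfrak d_{\alpha_1,\gamma_1},D_{\delta_1,\epsilon_1})$ and $(\mathfrak d_{\alpha_2,\gamma_2},D_{\delta_2,\epsilon_2})$ are equivalent via some $F$. By definition of equivalence $F$ is an isometric Lie algebra isomorphism with $F(\mathfrak l^*)=\mathfrak l^*$ inducing the identity on $\mathfrak a\oplus\mathfrak l$, hence in particular it is an equivalence of the underlying quadratic extensions with the semisimple derivations $D_{0,0}({D_\mathfrak l}_s,{D_\mathfrak a}_s)$ (here I use the lemma from Section \ref{MSLA:QuadrErw} that $(\mathfrak g,D_s;\mathfrak i,i,p)$ is again a quadratic extension, and Theorem \ref{thm:3}, which guarantees that this semisimple part is the same on both sides). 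Lemma \ref{lm:3} then tells us $F$ has the displayed block form for a unique $(\tau,\sigma)\in C^1_Q(\mathfrak l,\phi_\mathfrak l,\mathfrak a)$ with $(\alpha_1,\gamma_1)=(\alpha_2,\gamma_2)(\tau,\sigma)$. Finally, imposing $F\circ D_{\delta_1,\epsilon_1}=D_{\delta_2,\epsilon_2}\circ F$ and reading off the block entries — the same computation as above, now run backwards — yields $(\delta_1,\epsilon_1)=(\delta_2,\epsilon_2)(\tau,\sigma)$. Hence $(\alpha_1,\gamma_1,\delta_1,\epsilon_1)=(\alpha_2,\gamma_2,\delta_2,\epsilon_2)(\tau,\sigma)$, so the two cohomology classes in $H^2_{Q+}(\mathfrak l,D_\mathfrak l,\mathfrak a)$ coincide.

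The main obstacle is purely bookkeeping in the derivation-intertwining computation $F D_{\delta_1,\epsilon_1}=D_{\delta_2,\epsilon_2}F$: one must expand the product of the two $3\times 3$ block matrices, keep careful track of the signs coming from the dualizations $-D_\mathfrak l^*$ and $-\tau^*$ and of the quadratic correction term $\bar\sigma-\tfrac12\tau^*\tau$, and recognize the resulting expressions as $D^\circ\tau$, $D^\circ\sigma$ and $\langle(\delta_2+\tfrac12 D^\circ\tau)\wedge\tau\rangle$ using the explicit formulas for $D^\circ$ on $C^1(\mathfrak l,\mathfrak a)$ and on $C^2(\mathfrak l)$. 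No conceptual difficulty arises once Lemma \ref{lm:3} has reduced the isometry-and-Lie-algebra part to the shape of $F$; everything else is the observation that the ``$+$'' (derivation) coordinates of the cocycle behave under $F$ exactly as the action (\ref{Rechtswirkung2}) prescribes, which is forced by the very definition (\ref{Rechtswirkung}) of the action on $Z^2_{Q+}$.
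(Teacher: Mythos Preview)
Your proposal is correct and follows essentially the same approach as the paper: reduce to Lemma~\ref{lm:3} for the shape of $F$ and the $(\alpha,\gamma)$-relation, invoke Theorem~\ref{thm:3} together with uniqueness of the Jordan decomposition to pass between $D_{\delta_i,\epsilon_i}$ and its semisimple part $D_{0,0}({D_\mathfrak l}_s,{D_\mathfrak a}_s)$, and then read off the $(\delta,\epsilon)$-relation from the block entries of $F\,D_{\delta_1,\epsilon_1}=D_{\delta_2,\epsilon_2}\,F$. The only cosmetic difference is that the paper packages both directions into a single chain of ``if and only if'' equivalences rather than treating them separately.
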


\begin{proof}
Suppose $(\alpha_1,\gamma_1,\delta_1,\epsilon_1),(\alpha_2,\gamma_2,\delta_2,\epsilon_2)\in Z^2_{Q+}(\mathfrak l,D_\mathfrak l,\mathfrak a)$.
Then $(\mathfrak d_{\alpha_i,\gamma_i},D_{\delta_i,\epsilon_i})$ is a quadratic extension and $D_{0,0}({D_{\mathfrak l_i}}_s,{D_{\mathfrak a_i}}_s)$ the semisimple part of the Jordan decomposition of the derivation $D_{\delta_i,\epsilon_i}(D_{\mathfrak l_i},D_{\mathfrak a_i})$.

For a bijection $F$ from $(\mathfrak d_{\alpha_1,\gamma_1},D_{\delta_1,\epsilon_1})$ to $(\mathfrak d_{\alpha_2,\gamma_2},D_{\delta_2,\epsilon_2})$ the linear map $F\circ D_{0,0}({D_{\mathfrak l_1}}_s,{D_{\mathfrak a_1}}_s)\circ F^{-1}$ is the semisimple part of the Jordan decomposition of $D_{\delta_2,\epsilon_2}$.
Thus the map $F$ is an equivalence of $(\mathfrak d_{\alpha_i,\gamma_i},D_{\delta_i,\epsilon_i})$, $i=1,2$, if and only if it is an equivalence of the corresponding quadratic extensions from
$(\mathfrak d_{\alpha_1,\gamma_1}, D_{0,0}({D_\mathfrak l}_s,{D_\mathfrak a}_s))$ to $(\mathfrak{d}_{\alpha_2,\gamma_2},D_{0,0}({D_\mathfrak l}_s,{D_\mathfrak a}_s))$ 
and $F\circ D_{\delta_1,\epsilon_1}=D_{\delta_2,\epsilon_2}\circ F$ holds.
From Lemma \ref{lm:3} we obtain that the first equation is satisfied if and only if 
\[F=\begin{pmatrix}\id&-\tau^*&\bar\sigma-\frac{1}{2}\tau^*\tau\\&\id&\tau\\&&\id\end{pmatrix}\]
for a $(\tau,\sigma)\in C^1(\mathfrak{l},\phi_\mathfrak{l},\mathfrak{a})$, where  $\sigma(\cdot,\cdot)=\langle\bar\sigma(\cdot),\cdot\rangle$ and $(\alpha_1,\gamma_1)=(\alpha_2,\gamma_2)(\tau,\sigma)$.
Moreover, the condition
$D_{\delta_2,\epsilon_2}\circ F=F\circ D_{\delta_1,\epsilon_1}$ holds, if 
\begin{align}
\langle D_{\delta_2,\epsilon_2}\circ F L,A\rangle&=\langle F\circ D_{\delta_1,\epsilon_1} L,A\rangle\label{eq:DF=FD1}\\
\langle D_{\delta_2,\epsilon_2}\circ F L_1,L_2\rangle&=\langle F\circ D_{\delta_1,\epsilon_1} L_1,L_2\rangle\label{eq:DF=FD2}
\end{align}
is satisfied for all $L,L_1,L_2\in\mathfrak l$ and $A\in\mathfrak a$.
Here equation (\ref{eq:DF=FD1}) is equivalent to 
\begin{align}
\delta_1=\delta_2+D_\mathfrak a\circ \tau-\tau\circ D_\mathfrak l\label{eq:deltatauaequiv}.
\end{align}
Equation (\ref{eq:DF=FD2}) is equivalent to 
\begin{align*}
\epsilon_1(L_1,L_2)=&\,\epsilon_2(L_1,L_2)+D^\circ\sigma(L_1,L_2)+\langle\delta_1 L_1,\tau L_2\rangle\\
&-\langle\tau L_1,\delta_2 L_2\rangle+\frac{1}{2}\langle\tau\circ D_\mathfrak l L_1,\tau L_2\rangle+\frac{1}{2}\langle\tau L_1,\tau\circ D_\mathfrak l L_2\rangle,
\end{align*}
which leads to 
\[(\delta_1,\epsilon_1)=(\delta_2,\epsilon_2)(\tau,\sigma)\]
with the use of equation (\ref{eq:deltatauaequiv}).
\end{proof}

\begin{remark}
Using Lemma \ref{lm:30} and Lemma \ref{lemma:aequivstndrdmdll} we get that the group action of $C^1_Q(\mathfrak{l},\phi_\mathfrak{l},\mathfrak{a})$ on $Z^2_{Q+}(\mathfrak l,D_\mathfrak l,\mathfrak a)$ leaves the set of balanced cocycles $Z^2_{Q+}(\mathfrak l,D_\mathfrak l,\mathfrak a)_b$ invariant.
\end{remark}

For a nilpotent Lie algebra $\mathfrak l$ with bijective derivation $D_\mathfrak l$ and a trivial $(\mathfrak l,D_\mathfrak l)$-module $(\mathfrak a,D_\mathfrak a)$, where $D_\mathfrak a$ is bijective, we define
\begin{align}
H^2_{Q+}(\mathfrak l,D_\mathfrak l,\mathfrak a)_b:=Z^2_{Q+}(\mathfrak l,D_\mathfrak l,\mathfrak a)_b/C^1_{Q}(\mathfrak l,\phi_\mathfrak l,\mathfrak a).
\end{align}

\begin{theorem}
The Equivalence classes of quadratic extensions of $(\mathfrak{l},D_\mathfrak{l})$ by $(\mathfrak{a},D_\mathfrak{a})$ are in one-to-one correspondence with $H^2_{Q+}(\mathfrak{l},D_\mathfrak{l},\mathfrak{a})$.
Moreovere, the equivalence classes of balanced quadratic extensions are in bijection with $H^2_{Q+}(\mathfrak{l},D_\mathfrak{l},\mathfrak{a})_b$.
\end{theorem}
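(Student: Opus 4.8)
The plan is to assemble this theorem from the pieces already established, in two stages corresponding to the two halves of the statement. For the first assertion, I would prove the bijection between equivalence classes of quadratic extensions of $(\mathfrak{l},D_\mathfrak{l})$ by $(\mathfrak{a},D_\mathfrak{a})$ and $H^2_{Q+}(\mathfrak{l},D_\mathfrak{l},\mathfrak{a})$ by exhibiting mutually inverse maps. In one direction, a cohomology class $[\alpha,\gamma,\delta,\epsilon]$ is sent to the equivalence class of the standard-model quadratic extension $(\mathfrak{d}_{\alpha,\gamma}(\mathfrak{l},\mathfrak{a}),D_{\delta,\epsilon}(D_\mathfrak{l},D_\mathfrak{a}))$; this is well defined on representatives by Theorem~\ref{thm:3} (which guarantees that $(\alpha,\gamma,\delta,\epsilon)\in Z^2_{Q+}$ yields an actual quadratic extension) and well defined on classes by the ``if'' direction of Lemma~\ref{lemma:aequivstndrdmdll} (cocycles in the same $C^1_Q$-orbit give equivalent standard models). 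In the other direction, an arbitrary quadratic extension $(\mathfrak{g},D;\mathfrak{i},i,p)$ is sent to the cohomology class of the cocycle produced in Proposition~\ref{prop:existstndrdmdll}; I must check this is independent of the choices made in that construction (the sections $s$, $t$, the $D_s$-invariant complements), but any two such choices produce equivalent standard models, hence by the ``only if'' direction of Lemma~\ref{lemma:aequivstndrdmdll} the same cohomology class. Finally, Proposition~\ref{prop:existstndrdmdll} shows every quadratic extension is equivalent to some standard model, so the second map is surjective onto equivalence classes, and Lemma~\ref{lemma:aequivstndrdmdll} in full (both directions) shows the composite on cohomology classes is the identity; this gives the bijection.

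For the second assertion I would restrict the same correspondence to the balanced sublocus. Here one works with a nilpotent Lie algebra $\mathfrak{l}$ with bijective derivation and a trivial module $(\mathfrak{a},D_\mathfrak{a})$ with bijective $D_\mathfrak{a}$. By Lemma~\ref{lm:30}, the standard model $(\mathfrak{d}_{\alpha,\gamma},D_{\delta,\epsilon})$ is a balanced quadratic extension precisely when $(\alpha,\gamma,\delta,\epsilon)\in Z^2_{Q+}(\mathfrak{l},D_\mathfrak{l},\mathfrak{a})_b$, and by the remark following Lemma~\ref{lemma:aequivstndrdmdll} the $C^1_Q$-action preserves $Z^2_{Q+}(\mathfrak{l},D_\mathfrak{l},\mathfrak{a})_b$, so the balanced locus descends to a well-defined subset $H^2_{Q+}(\mathfrak{l},D_\mathfrak{l},\mathfrak{a})_b$ of cohomology classes. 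I then need: (a) the bijection above carries balanced cohomology classes to equivalence classes of balanced quadratic extensions — this is Lemma~\ref{lm:30} plus the fact that being balanced is an equivalence-invariant, since an equivalence $F$ satisfies $F(\mathfrak{i}_1)=\mathfrak{i}_2$ and isometries preserve the canonical isotropic ideal; and (b) every balanced quadratic extension is equivalent to a balanced standard model — this is the last sentence of Proposition~\ref{prop:existstndrdmdll}, which already records that the cocycle obtained lies in $Z^2_{Q+}(\mathfrak{l},D_\mathfrak{l},\mathfrak{a})_b$. Combining (a) and (b) with the first assertion gives the desired bijection on the balanced level.

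The main obstacle I anticipate is not any single hard computation but the bookkeeping around well-definedness of the map from quadratic extensions to cohomology classes: Proposition~\ref{prop:existstndrdmdll} constructs $(\alpha,\gamma,\delta,\epsilon)$ only after choosing a $D_s$-invariant isotropic complement of $\mathfrak{i}^\bot$, a section $s$, and an orthogonal complement $V_\mathfrak{a}$, and I must argue carefully that different admissible choices yield cocycles in the same $C^1_Q(\mathfrak{l},\phi_\mathfrak{l},\mathfrak{a})$-orbit. The clean way to do this is to observe that if two sets of choices produce standard models $(\mathfrak{d}_{\alpha_1,\gamma_1},D_{\delta_1,\epsilon_1})$ and $(\mathfrak{d}_{\alpha_2,\gamma_2},D_{\delta_2,\epsilon_2})$, then both are equivalent (via the respective $\varphi$'s) to the fixed quadratic extension $(\mathfrak{g},D;\mathfrak{i},i,p)$, hence equivalent to each other, hence by the ``only if'' half of Lemma~\ref{lemma:aequivstndrdmdll} their cocycles lie in the same orbit. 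Thus the entire argument reduces to citing Theorem~\ref{thm:3}, Proposition~\ref{prop:existstndrdmdll}, Lemma~\ref{lemma:aequivstndrdmdll}, and Lemma~\ref{lm:30}, and checking that ``balanced'' is an invariant of the equivalence class — which follows from $F(\mathfrak{i}_1)=\mathfrak{i}_2$ together with the observation that isometries of nilpotent metric Lie algebras preserve the canonical isotropic ideal, recorded earlier in Section~\ref{MSLA:QuadrErw}.
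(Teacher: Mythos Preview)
Your proposal is correct and follows essentially the same route as the paper's own proof: the paper also establishes well-definedness of the map from quadratic extensions to cohomology classes by taking two sets of choices (sections $s_i$), observing via Proposition~\ref{prop:existstndrdmdll} that both resulting standard models are equivalent to the given extension and hence to each other, and then invoking Lemma~\ref{lemma:aequivstndrdmdll} to conclude the cohomology classes coincide; it then simply cites Theorem~\ref{thm:3}, Proposition~\ref{prop:existstndrdmdll}, and Lemma~\ref{lemma:aequivstndrdmdll} for the remainder. Your treatment is somewhat more explicit than the paper's (especially for the balanced case, where the paper leaves the appeal to Lemma~\ref{lm:30} and the remark after Lemma~\ref{lemma:aequivstndrdmdll} implicit), but the logical skeleton is identical.
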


\begin{proof}
Let $(\mathfrak{g},D;\mathfrak{i},i,p)$ be a quadratic extension of $(\mathfrak{l},D_\mathfrak{l})$ by $(\mathfrak{a},D_\mathfrak{a})$.
Consider $(\alpha_i,\gamma_i,\delta_i,\epsilon_i)\in Z^2_{Q+}(\mathfrak{l},D_\mathfrak{l},\mathfrak{a})$, which are given by the equations (\ref{eq:alphadef}), (\ref{eq:gammadef}), (\ref{eq:deltadef}) and (\ref{eq:epsilondef}) with respect to two sections $s_i:\mathfrak{l}\rightarrow\mathfrak{g}$, $i=1,2$.
Proposition \ref{prop:existstndrdmdll} says that $(\mathfrak{d}_{\alpha_1,\gamma_1},D_{\delta_1,\epsilon_1})$ and $(\mathfrak{d}_{\alpha_2,\gamma_2},D_{\delta_2,\epsilon_2})$ are equivalent, since both are equivalent to $(\mathfrak{g},D;\mathfrak{i},i,p)$.
Thus $[\alpha_1,\gamma_1,\delta_1,\epsilon_1]=[\alpha_2,\gamma_2,\delta_2,\epsilon_2]$, because of Lemma \ref{lemma:aequivstndrdmdll}.
This shows that the cohomology group of $(\alpha,\gamma,\delta,\epsilon)\in Z^2_{Q+}(\mathfrak{l},D_\mathfrak{l},\mathfrak{a})$ defined by Proposition \ref{prop:existstndrdmdll} does not depend on the choice of the section $s$.
Using Theorem \ref{thm:3}, Proposition \ref{prop:existstndrdmdll} and Lemma \ref{lemma:aequivstndrdmdll} we obtain the assertion.
\end{proof}

\section{Isomorphism classes of metric, symplectic Lie algebras}\label{MSLA:Isomkl}
Until now, we know that we can represente the isomorphism classes of metric, symplectic Lie algebras by a standard model.
Now, we determine when two standard models are isomorphic as metric, symplectic Lie algebras and, finally, give a classification scheme for metric, symplectic Lie algebras.

\begin{proposition}(\cite{KO08}, see also the proof of Lemma 4.1 in \cite{KO06})\label{prop:3}
Let $(\mathfrak{l}_i,{D_{\mathfrak l_i}}_s)$ be Lie algebras with semisimple derivations ${D_{\mathfrak l_i}}_s$ and $(\mathfrak{a}_i,{D_{\mathfrak a_i}}_s)$ trivial $(\mathfrak{l}_i,{D_{\mathfrak l_i}}_s)$-modules with semisimple ${D_{\mathfrak a_i}}_s$. Suppose $(\alpha_i,\gamma_i)\in Z^2_{Q}(\mathfrak l_i,\phi_{\mathfrak l_i},\mathfrak a_i)_b$ for $i=1,2$.
Let \[F:(\mathfrak d_{\alpha_1,\gamma_1}(\mathfrak l_1,\mathfrak a_1),D_{\delta_1,\epsilon_1})\rightarrow (\mathfrak d_{\alpha_2,\gamma_2}(\mathfrak l_2,\mathfrak a_2),D_{\delta_2,\epsilon_2})\]
be an isomorphism and $\overline F:\mathfrak{a}_1\oplus\mathfrak{l}_1\rightarrow\mathfrak{a}_2\oplus\mathfrak{l}_2$ the corresponding isomorphism on the quotient induced by $F$.
We set 
\begin{align*}
S(L):=(p\circ \overline F)(L),\quad U(A):=\overline F^{\;-1}(A) 
\end{align*}
for $L\in\mathfrak l_1$, $A\in \mathfrak a_2$.
Then $(S,U)$ is an isomorphism of pairs of $(\mathfrak{l}_1,{D_{\mathfrak{l}_1}}_s,\mathfrak{a}_1)$ and $(\mathfrak{l}_2,{D_{\mathfrak{l}_2}}_s,\mathfrak{a}_2)$.
\end{proposition}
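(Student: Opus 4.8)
The plan is to verify that the pair $(S,U)$ defined from the isomorphism $F$ satisfies all the axioms of a morphism of pairs, and then observe that $F^{-1}$ gives the inverse morphism. The key observation making this tractable is that $F$, being an isomorphism of metric Lie algebras with skewsymmetric derivations that also respects the canonical isotropic ideal (it must, since $\mathfrak l_i^* = \mathfrak i(\mathfrak d_{\alpha_i,\gamma_i})$ when the cocycles are balanced, and isometries of nilpotent metric Lie algebras preserve the canonical isotropic ideal), descends to an isomorphism $\overline F\colon \mathfrak a_1\oplus\mathfrak l_1 \to \mathfrak a_2\oplus\mathfrak l_2$ of the quotient Lie algebras which intertwines the induced derivations. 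So the real content is bookkeeping on the quotient level.

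\textbf{Step 1: $S$ is a Lie algebra homomorphism.} First I would check that $\overline F$ maps $\mathfrak a_1$ into $\mathfrak a_2$. This holds because $\mathfrak a_i$ is intrinsically characterized inside $\mathfrak i(\mathfrak d_{\alpha_i,\gamma_i})^\bot/\mathfrak i(\mathfrak d_{\alpha_i,\gamma_i})$ — it is the image of the isometric embedding $i$, equivalently $\mathfrak a_i\cong \mathfrak i^\bot/\mathfrak i$ with $\mathfrak i = \mathfrak l_i^*$ — and $F$ preserves both $\mathfrak i$ and $\mathfrak i^\bot$, hence $\overline F(\mathfrak a_1)\subseteq\mathfrak a_2$. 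Consequently $\overline F$ induces a well-defined map on $(\mathfrak a_i\oplus\mathfrak l_i)/\mathfrak a_i = \mathfrak l_i$, which is exactly $S = p\circ\overline F$ (using $p\circ\overline F|_{\mathfrak a_1} = 0$), and $S$ is a Lie algebra isomorphism because $\overline F$ is one and $\mathfrak a_i$ are ideals.

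\textbf{Step 2: $U$ is an isometric embedding and intertwines the representations.} Since $\overline F(\mathfrak a_1)\subseteq\mathfrak a_2$ and $\overline F$ is an isometry for the induced inner products (which on $\mathfrak a_i$ restrict to $\langle\cdot,\cdot\rangle_{\mathfrak a_i}$), the inverse $U = \overline F^{-1}|_{\mathfrak a_2}\colon\mathfrak a_2\to\mathfrak a_1$ is an isometric embedding. For the compatibility $U\circ\rho_2(SL) = \rho_1(L)\circ U$: by the defining property (\ref{eq:DefRho}) of a quadratic extension, $\rho_i(L')A' = [\tilde L', i(A')]$ for any lift $\tilde L'$ of $L'$; applying $\overline F$ to the bracket in $\mathfrak d_{\alpha_1,\gamma_1}$ and using that $\overline F$ is a Lie algebra homomorphism sending a lift of $L$ to a lift of $SL$ and $i_1(A')$ into $i_2$-land, one transports the $\rho_1$-action to the $\rho_2$-action; rearranging gives the stated identity. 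Then I would check $SD_{\mathfrak l_1} = D_{\mathfrak l_2}S$ and $UD_{\mathfrak a_2} = D_{\mathfrak a_1}U$: these follow because $F$ intertwines $D_{\delta_1,\epsilon_1}$ with $D_{\delta_2,\epsilon_2}$, hence $\overline F$ intertwines the induced quotient derivations $\overline D_{\delta_i,\epsilon_i}$, which act on the $\mathfrak a_i$-summand as $D_{\mathfrak a_i}$ and on the quotient $\mathfrak l_i$ as $D_{\mathfrak l_i}$.

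\textbf{Step 3: conclude $(S,U)$ is an isomorphism of pairs.} Running the same construction with $F^{-1}$ in place of $F$ yields a morphism $(S',U')$ from $(\mathfrak l_2,{D_{\mathfrak l_2}}_s,\mathfrak a_2)$ to $(\mathfrak l_1,{D_{\mathfrak l_1}}_s,\mathfrak a_1)$; since $\overline{F^{-1}} = \overline F^{-1}$, one has $S' = S^{-1}$ and $U' = U^{-1}$, so $(S,U)$ is invertible in the category of pairs, i.e.\ an isomorphism.

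\textbf{The main obstacle} I anticipate is Step 1's claim that $\overline F$ preserves the subspace $\mathfrak a_i$ — equivalently, that $F$ preserves the canonical isotropic ideal $\mathfrak l_i^*$. This is where balancedness of the cocycles is essential: it is exactly the statement that $\mathfrak l_i^* = \mathfrak i(\mathfrak d_{\alpha_i,\gamma_i})$, combined with the fact (quoted earlier in the paper from \cite{Ka07}, \cite{KO06}) that any isometry of nilpotent metric Lie algebras carries the canonical isotropic ideal to the canonical isotropic ideal. Once this is in hand, the rest is a diagram chase; the derivation-compatibility for the semisimple parts then follows either directly as above or by passing to semisimple parts using the earlier lemmas, since $F$ was shown in Lemma~\ref{lemma:aequivstndrdmdll}-type arguments to send $D_{0,0}({D_{\mathfrak l_1}}_s,{D_{\mathfrak a_1}}_s)$ to the semisimple part $D_{0,0}({D_{\mathfrak l_2}}_s,{D_{\mathfrak a_2}}_s)$.
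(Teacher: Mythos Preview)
The paper does not supply its own proof of this proposition: it is quoted as a known result from \cite{KO08} (with a pointer to the proof of Lemma~4.1 in \cite{KO06}), so there is no in-paper argument to compare against. That said, your outline is correct and is precisely the argument one finds in those references. The single substantive point---that $F(\mathfrak l_1^*)=\mathfrak l_2^*$ because balanced cocycles force $\mathfrak l_i^*=\mathfrak i(\mathfrak d_{\alpha_i,\gamma_i})$ and isometries of nilpotent metric Lie algebras preserve the canonical isotropic ideal---is exactly what the paper itself invokes one paragraph later in the proof of Theorem~\ref{ddIsomorph} (``Since the cocycles are balanced, we have $F(\mathfrak l_1^*)=\mathfrak l_2^*$\ldots''). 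Two minor remarks: in Step~2 the representation-compatibility $U\circ\rho_2(SL)=\rho_1(L)\circ U$ is vacuous here because the modules are assumed \emph{trivial}; and for the derivation-compatibility you only need it for the semisimple parts ${D_{\mathfrak l_i}}_s,{D_{\mathfrak a_i}}_s$, which follows from $F$ intertwining the full derivations and hence their Jordan semisimple parts (as you note at the end). Otherwise your argument is complete.
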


\begin{theorem}\label{ddIsomorph}
Suppose $(\alpha_i,\gamma_i,\delta_i,\epsilon_i)\in Z^2_{Q+}(\mathfrak l_i,D_{\mathfrak l_i},\mathfrak a_i)_b$ for $i=1,2$.
The metric Lie algebras with bijective skewsymmetric derivations $(\mathfrak d_{\alpha_i,\gamma_i}(\mathfrak l_i,\mathfrak a_i),D_{\delta_i,\epsilon_i})$ are isomorphic if and only if there is an isomorphism of pairs $(S,U):(\mathfrak l_1,D_{\mathfrak l_1},\mathfrak a_1)\rightarrow(\mathfrak l_2,D_{\mathfrak l_2},\mathfrak a_2)$ such that \[(S,U)^*[\alpha_2,\gamma_2,\delta_2,\epsilon_2]=[\alpha_1,\gamma_1,\delta_1,\epsilon_1]\in H^2_{Q+}(\mathfrak l_1,D_{\mathfrak l_1},\mathfrak a_1)_b.\]
\end{theorem}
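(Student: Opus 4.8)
The plan is to reduce the statement to the previously established machinery by carefully separating the isometric/Lie-algebraic part of an isomorphism from its interaction with the derivations. Suppose first that $F:(\mathfrak d_{\alpha_1,\gamma_1}(\mathfrak l_1,\mathfrak a_1),D_{\delta_1,\epsilon_1})\rightarrow(\mathfrak d_{\alpha_2,\gamma_2}(\mathfrak l_2,\mathfrak a_2),D_{\delta_2,\epsilon_2})$ is an isomorphism of metric Lie algebras with skewsymmetric derivations. Since both standard models are balanced quadratic extensions (by Lemma \ref{lm:30}), $F$ must carry the canonical isotropic ideal of the source to that of the target, i.e.\ $F(\mathfrak l_1^*)=\mathfrak l_2^*$; in particular $F$ induces an isomorphism $\overline F:\mathfrak a_1\oplus\mathfrak l_1\rightarrow\mathfrak a_2\oplus\mathfrak l_2$ on the quotients and respects the filtration $\mathfrak l_i^*\subset(\mathfrak l_i^*\oplus\mathfrak a_i)\subset\mathfrak d_{\alpha_i,\gamma_i}$. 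By Theorem \ref{thm:3} the derivation $D_{\delta_i,\epsilon_i}$ has semisimple part exactly $D_{0,0}({D_{\mathfrak l_i}}_s,{D_{\mathfrak a_i}}_s)$, and since $F$ conjugates $D_{\delta_1,\epsilon_1}$ to $D_{\delta_2,\epsilon_2}$ it also conjugates the semisimple parts. Thus $F$ is simultaneously an isomorphism of the metric Lie algebras $(\mathfrak d_{\alpha_i,\gamma_i}(\mathfrak l_i,\mathfrak a_i),D_{0,0}({D_{\mathfrak l_i}}_s,{D_{\mathfrak a_i}}_s))$ with semisimple derivations, so Proposition \ref{prop:3} applies and yields an isomorphism of pairs $(S,U):(\mathfrak l_1,{D_{\mathfrak l_1}}_s,\mathfrak a_1)\rightarrow(\mathfrak l_2,{D_{\mathfrak l_2}}_s,\mathfrak a_2)$ with $S=p\circ\overline F$ and $U=\overline F^{\,-1}|_{\mathfrak a_2}$.

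Next I would upgrade $(S,U)$ to an isomorphism of pairs respecting the full (not merely semisimple) derivations. The point is that $F$ also intertwines the nilpotent parts ${D_{\mathfrak l_i}}_n$ and ${D_{\mathfrak a_i}}_n$, since $D_{\delta_i,\epsilon_i}-D_{0,0}=D_{\delta_i,\epsilon_i}({D_{\mathfrak l_i}}_n,{D_{\mathfrak a_i}}_n)$ and $F$ conjugates both $D_{\delta_i,\epsilon_i}$ and $D_{0,0}$. Reading off the block structure on the quotient $\mathfrak a_i\oplus\mathfrak l_i$, where $\overline F$ has the form $\left(\begin{smallmatrix}U^{-1}&\ast\\0&S\end{smallmatrix}\right)$, the relation $\overline F\circ D_{\delta_1,\epsilon_1}^{\mathrm{quot}}=D_{\delta_2,\epsilon_2}^{\mathrm{quot}}\circ\overline F$ forces $SD_{\mathfrak l_1}=D_{\mathfrak l_2}S$ and $UD_{\mathfrak a_2}=D_{\mathfrak a_1}U$, so $(S,U)$ is in fact an isomorphism of pairs $(\mathfrak l_1,D_{\mathfrak l_1},\mathfrak a_1)\rightarrow(\mathfrak l_2,D_{\mathfrak l_2},\mathfrak a_2)$ as required. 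Now pull the target cocycle back along $(S,U)$: by the functoriality established in Lemma \ref{lm:4} and the following lemma, $(S,U)^*(\alpha_2,\gamma_2,\delta_2,\epsilon_2)\in Z^2_{Q+}(\mathfrak l_1,D_{\mathfrak l_1},\mathfrak a_1)$, and one checks that $(\mathfrak d_{(S,U)^*\alpha_2,(S,U)^*\gamma_2}(\mathfrak l_1,\mathfrak a_1),D_{(S,U)^*\delta_2,(S,U)^*\epsilon_2})$ is isomorphic to $(\mathfrak d_{\alpha_2,\gamma_2}(\mathfrak l_2,\mathfrak a_2),D_{\delta_2,\epsilon_2})$ via the map built from $(S,U)$ in the obvious way. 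Composing with $F$ gives an isomorphism of $(\mathfrak d_{\alpha_1,\gamma_1},D_{\delta_1,\epsilon_1})$ with the pullback standard model over the \emph{same} pair $(\mathfrak l_1,D_{\mathfrak l_1},\mathfrak a_1)$, which by Lemma \ref{lemma:aequivstndrdmdll} is exactly the statement $(S,U)^*[\alpha_2,\gamma_2,\delta_2,\epsilon_2]=[\alpha_1,\gamma_1,\delta_1,\epsilon_1]$ in $H^2_{Q+}(\mathfrak l_1,D_{\mathfrak l_1},\mathfrak a_1)$; balancedness of this class is preserved because pullback along an isomorphism of pairs sends balanced cocycles to balanced ones (since balancedness is a property of the underlying metric Lie algebra).

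For the converse, given an isomorphism of pairs $(S,U)$ with $(S,U)^*[\alpha_2,\gamma_2,\delta_2,\epsilon_2]=[\alpha_1,\gamma_1,\delta_1,\epsilon_1]$, I would first use $(S,U)$ to build the tautological isomorphism $\Phi_{S,U}:\mathfrak d_{(S,U)^*\alpha_2,(S,U)^*\gamma_2}(\mathfrak l_1,\mathfrak a_1)\rightarrow\mathfrak d_{\alpha_2,\gamma_2}(\mathfrak l_2,\mathfrak a_2)$, namely $S^{-*}\oplus U^{-1}\oplus S$ on $\mathfrak l_1^*\oplus\mathfrak a_1\oplus\mathfrak l_1$, and verify directly from the definitions of the bracket, inner product, and the derivation $D_{\delta,\epsilon}$ that it is an isometry intertwining the brackets and the derivations; this is the routine bookkeeping that the morphism-of-pairs axioms are precisely designed to make work. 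Then, since the two cocycles $(\alpha_1,\gamma_1,\delta_1,\epsilon_1)$ and $(S,U)^*(\alpha_2,\gamma_2,\delta_2,\epsilon_2)$ lie in the same class of $H^2_{Q+}(\mathfrak l_1,D_{\mathfrak l_1},\mathfrak a_1)$, Lemma \ref{lemma:aequivstndrdmdll} furnishes an equivalence $F_0$ of the corresponding quadratic extensions, in particular an isomorphism of metric Lie algebras with derivations; composing $\Phi_{S,U}\circ F_0$ gives the desired isomorphism between $(\mathfrak d_{\alpha_1,\gamma_1},D_{\delta_1,\epsilon_1})$ and $(\mathfrak d_{\alpha_2,\gamma_2},D_{\delta_2,\epsilon_2})$.

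The main obstacle I anticipate is the passage in the first half from the semisimple-derivation statement of Proposition \ref{prop:3} to the full-derivation claim: one must argue that the same pair $(S,U)$ extracted via the semisimple parts actually commutes with $D_{\mathfrak l_i}$ and $D_{\mathfrak a_i}$ themselves, which requires exploiting that $F$ preserves not only $D_{0,0}$ but the entire $D_{\delta_i,\epsilon_i}$, and reading this off correctly from the triangular block structure on $\mathfrak g/\mathfrak i$ while being careful about the off-diagonal $\ast$-block not interfering. The balancedness bookkeeping (that $Z^2_{Q+}(\cdot)_b$ is stable under both the group action and pullback along isomorphisms of pairs) is the other point that needs to be cited carefully rather than waved away, but it follows from the earlier remark after Lemma \ref{lemma:aequivstndrdmdll} together with the fact that balancedness depends only on the metric Lie algebra structure.
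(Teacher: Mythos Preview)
Your proof is correct and follows the same route as the paper: extract $(S,U)$ via Proposition \ref{prop:3}, upgrade it to respect the full derivations by reading the intertwining relation $F\circ D_{\delta_1,\epsilon_1}=D_{\delta_2,\epsilon_2}\circ F$ on the quotient $\mathfrak a_i\oplus\mathfrak l_i$ (your block-matrix argument makes explicit what the paper compresses into one line), build $\Phi_{S,U}=\diag({S^*}^{-1},U^{-1},S)$, and invoke Lemma \ref{lemma:aequivstndrdmdll}. The only point worth tightening is that Lemma \ref{lemma:aequivstndrdmdll} requires the composite $\Phi_{S,U}^{-1}\circ F$ to be an \emph{equivalence} of quadratic extensions rather than merely an isomorphism of metric Lie algebras with derivations; this follows immediately from the very definitions $S=p\circ\overline F$ and $U=\overline F^{\,-1}|_{\mathfrak a_2}$, which force the induced map on $\mathfrak a_1\oplus\mathfrak l_1$ to commute with the canonical $i$ and $p$, and the paper likewise asserts this without spelling it out.
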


\begin{proof}
Let $(S,U)$ be an isomorphism of pairs such that $(S,U)^*(\alpha_2,\gamma_2,\delta_2,\epsilon_2)$ lies in the cohomology set of $(\alpha_1,\gamma_1,\delta_1,\epsilon_1)$.
The map
\begin{align}\label{eq:IsoSU}
\varphi=\diag({S^*}^{-1},U^{-1},S):\mathfrak l^*_1\oplus\mathfrak a_1\oplus\mathfrak l_1\rightarrow\mathfrak l^*_2\oplus\mathfrak a_2\oplus\mathfrak l_2
\end{align}
defines an isomorphism of the Lie algebra $\mathfrak d_{(S,U)^*\alpha_2,(S,U)^*\gamma_2}(\mathfrak l_1,\mathfrak a_1)$ with corresponding derivation $D_{(S,U)^*\delta_2,(S,U)^*\epsilon_2}(D_{\mathfrak l_1},D_{\mathfrak a_1})$ to the Lie algebra $\mathfrak d_{\alpha_2,\gamma_2}$ with derivation $D_{\delta_2,\epsilon_2}$.

Since $(S,U)^*(\alpha_2,\gamma_2,\delta_2,\epsilon_2)$ and $(\alpha_1,\gamma_1,\delta_1,\epsilon_1)$ lie in the same cohomology set, the corresponding quadratic extensions are equivalent.
Especially $(\mathfrak d_{(S,U)^*\alpha_2,(S,U)^*\gamma_2},D_{(S,U)^*\delta_2,(S,U)^*\epsilon_2})$ is isomorphic to $(\mathfrak d_{\alpha_1,\gamma_1},D_{\delta_1,\epsilon_1})$.
Thus $(\mathfrak d_{\alpha_1,\gamma_1},D_{\delta_1,\epsilon_1})$ and $(\mathfrak d_{\alpha_2,\gamma_2},D_{\delta_2,\epsilon_2})$ are isomorphic.

Now, suppose $(\alpha_i,\gamma_i,\delta_i,\epsilon_i)\in Z^2_{Q+}(\mathfrak l_i,D_{\mathfrak l_i},\mathfrak a_i)_b$, $i=1,2$.
Let
\[F:(\mathfrak d_{\alpha_1,\gamma_1}(\mathfrak l_1,\mathfrak a_1),D_{\delta_1,\epsilon_1})\rightarrow (\mathfrak d_{\alpha_2,\gamma_2}(\mathfrak l_2,\mathfrak a_2),D_{\delta_2,\epsilon_2})\]
be an isomorphism of the metric Lie algebras.
Since the cocycles are balanced, we have $F(\mathfrak l_1^*)=\mathfrak l_2^*$ and $F$ induces an isomorphism  $\overline F:\mathfrak a_1\oplus\mathfrak l_1\rightarrow\mathfrak a_2\oplus\mathfrak l_2$. Now, we set 
\begin{align*}
S(L):=(p\circ \overline F)(L),\quad U(A):=\overline F^{\;-1}(A) 
\end{align*}
for $L\in\mathfrak l_1$, $A\in \mathfrak a_2$.
Because of $D_{\delta_1,\epsilon_1}\circ F=F\circ D_{\delta_2,\epsilon_2}$ and Proposition \ref{prop:3} we obtain that $(S,U)$ is an isomorphism of the pairs $(\mathfrak{l}_i,D_{\mathfrak l_i},\mathfrak{a}_i)$.
Thus, $\varphi$ given by equation (\ref{eq:IsoSU}) defines an isomorphism from the metric Lie algebra with skewsymmetric bijective derivation $(\mathfrak d_{(S,U)^*\alpha_2,(S,U)^*\gamma_2}(\mathfrak l_1,\mathfrak a_1),D_{(S,U)^*\delta_2,(S,U)^*\epsilon_2})$ to $(\mathfrak d_{\alpha_2,\gamma_2}(\mathfrak l_2,\mathfrak a_2),D_{\delta_2,\epsilon_2})$.
Thus the corresponding quadratic extensions $(\mathfrak d_{(S,U)^*\alpha_2,(S,U)^*\gamma_2}),D_{(S,U)^*\delta_2,(S,U)^*\epsilon_2},\mathfrak l_1^*,i,p)$ and $(\mathfrak d_{\alpha_1,\gamma_1},D_{\delta_1,\epsilon_1},\mathfrak l_1^*,i,p)$ are equivalent.
Using Lemma \ref{lemma:aequivstndrdmdll} yields the assertion.
\end{proof}

\begin{remark}
For an isomorphism of pairs $(S,U)$ the map $\varphi$ defined by equation (\ref{eq:IsoSU}) is an isomorphism of metric Lie algebras with derivations and it holds $\varphi(\mathfrak{l}_1^*)=\mathfrak{l}_2^*$.
Thus, using Lemma \ref{lm:30} we obtain that the isomorphisms of pairs define pull back maps, which map the balanced cocycles of $Z^2_{Q+}(\mathfrak{l}_2,D_{\mathfrak{l}_2},\mathfrak{a}_2)$ to balanced cocycles of $Z^2_{Q+}(\mathfrak{l}_1,D_{\mathfrak{l}_1},\mathfrak{a}_1)$. 
\end{remark}

%indecomposability
\begin{lemma}\label{lm:decomp}
Assume $(\alpha,\gamma,\delta,\epsilon)\in Z^2_{Q+}(\mathfrak l,D_\mathfrak l,\mathfrak a)_b$. The quadratic extension 
$(\mathfrak d_{\alpha,\gamma}(\mathfrak l,\mathfrak a),D_{\delta,\epsilon}(D_\mathfrak l,D_\mathfrak a))$ is decomposable if and only if $[\alpha,\gamma,\delta,\epsilon]$ is decomposable.
\end{lemma}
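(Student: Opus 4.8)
The plan is to reduce both implications to one bookkeeping fact about standard models: if $(\mathfrak l,D_\mathfrak l,\mathfrak a)=(\mathfrak l_1,D_{\mathfrak l_1},\mathfrak a_1)\oplus(\mathfrak l_2,D_{\mathfrak l_2},\mathfrak a_2)$ is a direct sum of pairs and $(\alpha,\gamma,\delta,\epsilon)=\sum_{k=1}^{2}(q_k,j_k)^*(\alpha_k,\gamma_k,\delta_k,\epsilon_k)$ with $(\alpha_k,\gamma_k,\delta_k,\epsilon_k)\in Z^2_{Q+}(\mathfrak l_k,D_{\mathfrak l_k},\mathfrak a_k)$, then the standard model $(\mathfrak d_{\alpha,\gamma}(\mathfrak l,\mathfrak a),D_{\delta,\epsilon})$ together with its canonical ideal $\mathfrak l^*=\mathfrak l_1^*\oplus\mathfrak l_2^*$ \emph{equals}, not merely is isomorphic to, the direct sum of quadratic extensions $(\mathfrak d_{\alpha_1,\gamma_1}(\mathfrak l_1,\mathfrak a_1),D_{\delta_1,\epsilon_1})\oplus(\mathfrak d_{\alpha_2,\gamma_2}(\mathfrak l_2,\mathfrak a_2),D_{\delta_2,\epsilon_2})$. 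I would verify this directly from the bracket formulas defining the standard model and from the matrix form of $D_{\delta,\epsilon}$: all cross terms between the two blocks $\mathfrak l_k^*\oplus\mathfrak a_k\oplus\mathfrak l_k$ vanish, because $\alpha$, $\gamma$, $\rho$, $\langle\cdot,\cdot\rangle_\mathfrak a$, $\delta$, $\epsilon$ and the bracket of $\mathfrak l$ all split over the decomposition. Note this direct sum is non-trivial as soon as both $(\mathfrak l_k,D_{\mathfrak l_k},\mathfrak a_k)\neq(0,0,0)$, since then $\mathfrak d_{\alpha_k,\gamma_k}(\mathfrak l_k,\mathfrak a_k)=\mathfrak l_k^*\oplus\mathfrak a_k\oplus\mathfrak l_k\neq\{0\}$. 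I will also use the elementary additivity $\mathfrak i(\mathfrak g_1\oplus\mathfrak g_2)=\mathfrak i(\mathfrak g_1)\oplus\mathfrak i(\mathfrak g_2)$, which is immediate from the defining formula for the canonical isotropic ideal and the block structure of the inner product and the bracket.

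For the direction ``$[\alpha,\gamma,\delta,\epsilon]$ decomposable $\Rightarrow$ the extension decomposable'': by definition there is a non-trivial decomposition of pairs $(\mathfrak l,D_\mathfrak l,\mathfrak a)=(\mathfrak l_1,\dots)\oplus(\mathfrak l_2,\dots)$ and cocycles $(\alpha_k,\gamma_k,\delta_k,\epsilon_k)$ with $[\alpha,\gamma,\delta,\epsilon]=\big[\sum_k(q_k,j_k)^*(\alpha_k,\gamma_k,\delta_k,\epsilon_k)\big]$. Replacing the representative by the cohomologous sum of pull-backs changes the standard model only up to equivalence of quadratic extensions by Lemma~\ref{lemma:aequivstndrdmdll}. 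By the computational fact this new standard model is a non-trivial direct sum of quadratic extensions, hence decomposable; and a quadratic extension equivalent to a decomposable one is decomposable (Section~\ref{MSLA:QuadrErw}), so $(\mathfrak d_{\alpha,\gamma}(\mathfrak l,\mathfrak a),D_{\delta,\epsilon})$ is decomposable.

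For the converse: since $(\alpha,\gamma,\delta,\epsilon)$ is balanced, Lemma~\ref{lm:30} says $(\mathfrak d_{\alpha,\gamma}(\mathfrak l,\mathfrak a),D_{\delta,\epsilon})$ is a balanced quadratic extension, and if it is decomposable it is equivalent to a non-trivial direct sum $(\mathfrak g_1,\mathfrak i_1,i_1,p_1)\oplus(\mathfrak g_2,\mathfrak i_2,i_2,p_2)$. From $\mathfrak i_1\oplus\mathfrak i_2=\mathfrak i(\mathfrak g_1\oplus\mathfrak g_2)=\mathfrak i(\mathfrak g_1)\oplus\mathfrak i(\mathfrak g_2)$ and $\mathfrak i_j,\mathfrak i(\mathfrak g_j)\subseteq\mathfrak g_j$ we get $\mathfrak i_j=\mathfrak i(\mathfrak g_j)$, so each summand is a balanced quadratic extension; by Proposition~\ref{prop:existstndrdmdll} each is then equivalent to a balanced standard model $(\mathfrak d_{\alpha_j,\gamma_j}(\mathfrak l_j,\mathfrak a_j),D_{\delta_j,\epsilon_j})$ over a pair $(\mathfrak l_j,D_{\mathfrak l_j},\mathfrak a_j)\neq(0,0,0)$. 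By the computational fact (and using the additivity of $\mathfrak i(\cdot)$ once more, together with Lemma~\ref{lm:30}, to see it is balanced) their direct sum is the balanced standard model over $(\mathfrak l',D_{\mathfrak l'},\mathfrak a'):=\bigoplus_j(\mathfrak l_j,D_{\mathfrak l_j},\mathfrak a_j)$ associated with the cocycle $(\alpha',\gamma',\delta',\epsilon')=\sum_j(q_j,j_j)^*(\alpha_j,\gamma_j,\delta_j,\epsilon_j)$, whose class is decomposable by construction. Now $(\mathfrak d_{\alpha,\gamma}(\mathfrak l,\mathfrak a),D_{\delta,\epsilon})$ and $(\mathfrak d_{\alpha',\gamma'}(\mathfrak l',\mathfrak a'),D_{\delta',\epsilon'})$ are isomorphic metric Lie algebras with derivation and both cocycles are balanced, so Theorem~\ref{ddIsomorph} gives an isomorphism of pairs $(S,U)\colon(\mathfrak l,D_\mathfrak l,\mathfrak a)\to(\mathfrak l',D_{\mathfrak l'},\mathfrak a')$ with $(S,U)^*[\alpha',\gamma',\delta',\epsilon']=[\alpha,\gamma,\delta,\epsilon]$. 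Since the pull-back along an isomorphism of pairs sends decomposable cohomology classes to decomposable ones (Section~\ref{MSLA:Kohom}), $[\alpha,\gamma,\delta,\epsilon]$ is decomposable.

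The routine core is the ``computational fact'' — checking that all cross terms vanish — together with the additivity of $\mathfrak i(\cdot)$ over direct sums. The genuinely delicate step, which I would write out carefully, is the claim in the converse that every summand of a decomposable \emph{balanced} quadratic extension is again balanced: this is what allows the passage from a decomposition of the metric Lie algebra to a decomposition of the associated cocycle, and it is precisely where the additivity of the canonical isotropic ideal enters. The remaining difficulty is purely organisational: keeping the three notions — equivalence of quadratic extensions, isomorphism of metric Lie algebras with derivation, and isomorphism of pairs — strictly apart and invoking Lemma~\ref{lemma:aequivstndrdmdll} and Theorem~\ref{ddIsomorph} at the right points.
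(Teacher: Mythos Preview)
Your proposal is correct, and for the direction ``$[\alpha,\gamma,\delta,\epsilon]$ decomposable $\Rightarrow$ extension decomposable'' it is essentially identical to the paper's argument: both use the computational fact that the standard model of a sum of pulled-back cocycles is literally the direct sum of the summand standard models, together with Lemma~\ref{lemma:aequivstndrdmdll}.

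For the converse direction the paper takes a shorter route. Rather than showing that the summands of a decomposable balanced extension are themselves balanced, passing to their standard models, recombining, and then invoking Theorem~\ref{ddIsomorph} to compare with the original, the paper simply observes that if a quadratic extension is decomposable one may choose the section $s$ in the construction (equations~(\ref{eq:alphadef})--(\ref{eq:epsilondef})) compatibly with the direct sum; the resulting cocycle is then manifestly a sum of pull-backs, and since the cohomology class associated to a quadratic extension is independent of the section, this \emph{is} the class $[\alpha,\gamma,\delta,\epsilon]$. This avoids both the additivity argument for $\mathfrak i(\cdot)$ and the appeal to Theorem~\ref{ddIsomorph}. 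Your approach has the merit of making the balancedness of the summands explicit (a fact of independent interest), but the paper's argument shows that neither balancedness of the summands nor the isomorphism theorem is actually needed for this implication.
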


\begin{proof}
In general, if a quadratic extension is decomposable, then the corresponding cohomology class $[\alpha,\gamma,\delta,\epsilon]$ given by equations (\ref{eq:alphadef}), (\ref{eq:gammadef}), (\ref{eq:deltadef}) and (\ref{eq:epsilondef}) equals 
\[(q_1,j_1)^*[\alpha^1,\gamma_1,\delta_1,\epsilon_1]+(q_2,j_2)^*[\alpha^2,\gamma_2,\delta_2,\epsilon_2],\]
where $[\alpha_i,\gamma_i,\delta_i,\epsilon_i]$ are the corresponding cohomology classes of the direct summands of the quadratic extension.
This immediately implies that $[\alpha,\gamma,\delta,\epsilon]\in H^2(\mathfrak l,D_\mathfrak l,\mathfrak a)$ is decomposable for an decomposable quadratic extension $(\mathfrak d_{\alpha,\gamma}(\mathfrak l,\mathfrak a),D_{\delta,\epsilon}(D_\mathfrak l,D_\mathfrak a))$.
Conversely, if $[\alpha,\gamma,\delta,\epsilon]\in H^2(\mathfrak l,D_\mathfrak l,\mathfrak a)_b$ is decomposable, then 
\[[\alpha,\gamma,\delta,\epsilon]=(q_1,j_1)^*[\alpha_1,\gamma_1,\delta_1,\epsilon_1]+(q_2,j_2)^*[\alpha_2,\gamma_2,\delta_2,\epsilon_2]\] for certain $[\alpha_i,\gamma_i,\delta_i,\epsilon_i]\in H^2_{Q+}(\mathfrak l_i,D_{\mathfrak l_i},\mathfrak a_i)$.
Then the observation in the beginning of the proof and Lemma \ref{lemma:aequivstndrdmdll} yield that 
$(\mathfrak d_{\alpha,\gamma}(\mathfrak l,\mathfrak a),D_{\delta,\epsilon}(D_\mathfrak l,D_\mathfrak a))$ is equivalent to the direct sum $(\mathfrak d_{\alpha_1,\gamma_1}(\mathfrak l_1,\mathfrak a_1),D_{\delta_1,\epsilon_1}(D_{\mathfrak l_1},D_{\mathfrak a_1}))\oplus (\mathfrak d_{\alpha_2,\gamma_2}(\mathfrak l_2,\mathfrak a_2),D_{\delta_2,\epsilon_2}(D_{\mathfrak l_2},D_{\mathfrak a_2}))$ and hence is decomposable.
\end{proof}

Let $G(\mathfrak l,D_\mathfrak l,\mathfrak a)$ denote the set of all morphisms of pairs $(S,U)$ constituting of automorphisms $S$ of $\mathfrak l$ and isometries $U$ of $\mathfrak a$, which satisfy $SD_\mathfrak l=D_\mathfrak lS$ and $UD_\mathfrak a=D_\mathfrak aU$.
We simply write $G$, if $(\mathfrak l,D_\mathfrak l,\mathfrak a)$ is clear from the context.
Let $H^2_{Q+}(\mathfrak l,D_\mathfrak l,\mathfrak a)_0$ denote the set of all balanced and indecomposable cohomology classes in $H^2_{Q+}(\mathfrak l,D_\mathfrak l,\mathfrak a)$.

The final result of this section is the following classification scheme. It follows directly from Lemma \ref{lm:msla1}, Theorem \ref{thm:1}, Proposition \ref{prop:existstndrdmdll} and Theorem \ref{ddIsomorph}.
\begin{theorem} \label{Klassifikation}
Let $\mathfrak l$ be a (nilpotent) Lie algebra, $D_\mathfrak l$ a bijective derivation of $\mathfrak l$ and $(\mathfrak a,D_\mathfrak a)$ a trivial $(\mathfrak l,D_\mathfrak l)$-module with skewsymmetric bijective derivation $D_\mathfrak a$ of $\mathfrak a$.
The set of isomorphism classes of indecomposable metric symplectic Lie algebras $\mathfrak g$ with the following properties
\begin{itemize}
\item $\mathfrak i(\mathfrak g)^\bot/\mathfrak i(\mathfrak g)$ together with the induced bijective skewsymmetric derivation is isomorphic to $(\mathfrak a,D_\mathfrak a)$ as a metric Lie algebra with derivation and
\item $\mathfrak g/\mathfrak i(\mathfrak g)^\bot$ together with the induced bijective derivation is isomorphic to $(\mathfrak l,D_\mathfrak l)$ as a Lie algebra with derivation
\end{itemize}
is in one-to-one correspondence to the $G(\mathfrak l,D_\mathfrak l,\mathfrak a)$-orbits of $H^2_{Q+}(\mathfrak l,D_\mathfrak l,\mathfrak a)_0$.

The set of isomorphism classes of indecomposable metric symplectic Lie algebras is in bijection to the set 
\begin{align*}
\coprod_{(\mathfrak l,D_\mathfrak l)\in \mathcal L}
\coprod_{\mathfrak a\in \mathcal A_{\mathfrak l,D_\mathfrak l}}
H^2_{Q+}(\mathfrak l,D_\mathfrak l,\mathfrak a)_0/G(\mathfrak l,D_\mathfrak l,\mathfrak a),
\end{align*}
where $\mathcal L$ is a system of representantives of the isomorphism classes of nilpotent Lie algebras with bijective derivations and $\mathcal A_{\mathfrak l,D_\mathfrak l}$ a system of representatives of the isomorphism classes of abelian metric Lie algebras with bijective skewsymmetric derivations (considered as trivial $(\mathfrak l,D_\mathfrak l)$-modules).
\end{theorem}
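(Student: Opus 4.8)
The statement is an assembly of the results already proved, so the plan is to organise the bookkeeping rather than to establish anything new. Fix $(\mathfrak l,D_\mathfrak l)$ and $(\mathfrak a,D_\mathfrak a)$ as in the hypotheses. First I would produce a map from the set of isomorphism classes of metric symplectic Lie algebras $\mathfrak g$ with the two listed properties to $H^2_{Q+}(\mathfrak l,D_\mathfrak l,\mathfrak a)_0/G(\mathfrak l,D_\mathfrak l,\mathfrak a)$. Given such a $\mathfrak g$, Lemma \ref{lm:msla1} lets me view it as a nilpotent metric Lie algebra with bijective skewsymmetric derivation, and Theorem \ref{thm:1} makes the sequence (\ref{eq:kanquadrErw}) for $\mathfrak i=\mathfrak i(\mathfrak g)$ into a balanced quadratic extension of $(\mathfrak g/\mathfrak i(\mathfrak g)^\bot,\overline D)$ by $(\mathfrak i(\mathfrak g)^\bot/\mathfrak i(\mathfrak g),\overline D)$; choosing isomorphisms $\psi_\mathfrak l$ and $\psi_\mathfrak a$ of these two quotients with $(\mathfrak l,D_\mathfrak l)$ and $(\mathfrak a,D_\mathfrak a)$ (possible by the hypotheses, and $\mathfrak a$ is then automatically a trivial module) turns $\mathfrak g$ into a balanced quadratic extension of $(\mathfrak l,D_\mathfrak l)$ by $(\mathfrak a,D_\mathfrak a)$. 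Proposition \ref{prop:existstndrdmdll} then gives $(\alpha,\gamma,\delta,\epsilon)\in Z^2_{Q+}(\mathfrak l,D_\mathfrak l,\mathfrak a)_b$ whose standard model is equivalent to it, and Lemma \ref{lm:decomp}, together with the indecomposability of $\mathfrak g$ and the lemma identifying decomposability of a balanced quadratic extension with that of the underlying metric symplectic Lie algebra, shows $[\alpha,\gamma,\delta,\epsilon]\in H^2_{Q+}(\mathfrak l,D_\mathfrak l,\mathfrak a)_0$.

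Next I would check well-definedness. Replacing $(\psi_\mathfrak l,\psi_\mathfrak a)$ by another admissible choice amounts to postcomposing with an element of $G(\mathfrak l,D_\mathfrak l,\mathfrak a)$, and I would verify that this changes $(\alpha,\gamma,\delta,\epsilon)$ exactly by the corresponding pull-back $(S,U)^*$; hence the $G$-orbit of $[\alpha,\gamma,\delta,\epsilon]$ does not depend on the choice. Moreover, if $F:\mathfrak g_1\to\mathfrak g_2$ is an isomorphism of metric symplectic Lie algebras then $F(\mathfrak i(\mathfrak g_1))=\mathfrak i(\mathfrak g_2)$, so $F$ carries the canonical quadratic extension of $\mathfrak g_1$ to that of $\mathfrak g_2$, and reading off the induced identifications shows that the two $G$-orbits coincide. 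This gives a well-defined map on isomorphism classes.

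For surjectivity, given $[\alpha,\gamma,\delta,\epsilon]\in H^2_{Q+}(\mathfrak l,D_\mathfrak l,\mathfrak a)_0$ the derivation $D_{\delta,\epsilon}(D_\mathfrak l,D_\mathfrak a)$ is bijective, being block upper triangular with bijective diagonal blocks $-D_\mathfrak l^*$, $D_\mathfrak a$, $D_\mathfrak l$; hence by Theorem \ref{thm:3} and Lemma \ref{lm:msla1} the standard model $\mathfrak d_{\alpha,\gamma}(\mathfrak l,\mathfrak a)$ is a metric symplectic Lie algebra, by Lemma \ref{lm:30} the cocycle being balanced forces $\mathfrak i(\mathfrak d_{\alpha,\gamma})=\mathfrak l^*$ so that its two quotients are $(\mathfrak l,D_\mathfrak l)$ and $(\mathfrak a,D_\mathfrak a)$, and by Lemma \ref{lm:decomp} it is indecomposable. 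Thus it lies in the set in question and is sent to $[\alpha,\gamma,\delta,\epsilon]$. For injectivity, if two such algebras $\mathfrak g_1,\mathfrak g_2$ have classes lying in one $G$-orbit, I realise each by the standard model of a balanced cocycle and apply Theorem \ref{ddIsomorph} (an isomorphism of pairs of $(\mathfrak l,D_\mathfrak l,\mathfrak a)$ with itself being precisely an element of $G(\mathfrak l,D_\mathfrak l,\mathfrak a)$) to conclude $\mathfrak g_1\cong\mathfrak g_2$. This proves the first assertion.

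Finally, for the second assertion I would note that every indecomposable metric symplectic Lie algebra $\mathfrak g$ determines $(\mathfrak g/\mathfrak i(\mathfrak g)^\bot,\overline D)$, a nilpotent Lie algebra with bijective derivation, and $(\mathfrak i(\mathfrak g)^\bot/\mathfrak i(\mathfrak g),\overline D)$, an abelian metric Lie algebra with bijective skewsymmetric derivation carrying the trivial $\mathfrak l$-action, each unique up to isomorphism; hence $\mathfrak g$ contributes to exactly one pair $(\mathfrak l,D_\mathfrak l)\in\mathcal L$, $\mathfrak a\in\mathcal A_{\mathfrak l,D_\mathfrak l}$, and these contributions are pairwise disjoint. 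Combining this with the first assertion gives the displayed bijection. The only genuinely delicate point is the well-definedness bookkeeping of the second paragraph, namely checking that changing the identifications $\psi_\mathfrak l,\psi_\mathfrak a$ acts on the cocycle by a pull-back under $G(\mathfrak l,D_\mathfrak l,\mathfrak a)$; everything else is a direct appeal to Lemma \ref{lm:msla1}, Theorems \ref{thm:1}, \ref{thm:3} and \ref{ddIsomorph}, Proposition \ref{prop:existstndrdmdll} and Lemmas \ref{lm:30} and \ref{lm:decomp}.
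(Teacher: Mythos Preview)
Your proposal is correct and takes essentially the same approach as the paper: the paper's own proof is a one-line citation of Lemma \ref{lm:msla1}, Theorem \ref{thm:1}, Proposition \ref{prop:existstndrdmdll} and Theorem \ref{ddIsomorph}, and you have simply unpacked the bookkeeping behind that citation, additionally invoking Theorem \ref{thm:3} and Lemmas \ref{lm:30} and \ref{lm:decomp} where they are implicitly needed. Your more careful treatment of well-definedness under the choice of identifications $\psi_\mathfrak l,\psi_\mathfrak a$ and of the bijectivity of $D_{\delta,\epsilon}$ fills in exactly the details the paper leaves to the reader.
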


\begin{remark}
Here, the union was taken over a system of representatives $\mathcal L$ of the isomorphism classes of all Lie algebras $\mathfrak l$ with bijective derivations $D_\mathfrak l$ of $\mathfrak l$.
But often $(\mathfrak l,D_\mathfrak l)$ does not have a trivial $(\mathfrak l,D_\mathfrak l)$-module $\mathfrak a$ such that $H^2_{Q+}(\mathfrak l,D_\mathfrak l,\mathfrak a)_0$ is not empty.
We call a Lie algebra $\mathfrak l$ with bijective $D_\mathfrak l$ admissable if there is such a trivial $(\mathfrak l,D_\mathfrak l)$-module.
The set of isomorphism classes of indecomposable metric symplectic Lie algebras is in one-to-one correspondence to the set 
\begin{align*}
\coprod_{(\mathfrak l,D_\mathfrak l)\in \mathcal L_{ad}}
\coprod_{\mathfrak a\in \mathcal A_{\mathfrak l,D_\mathfrak l}}
H^2_{Q+}(\mathfrak l,D_\mathfrak l,\mathfrak a)_0/G(\mathfrak l,D_\mathfrak l,\mathfrak a),
\end{align*}
where $\mathcal L_{ad}$ is a system of representatives of the isomorphism classes of admissable Lie algebras with bijective derivations and $\mathcal A_{\mathfrak l,D_\mathfrak l}$ a system of representatives of isomorphism classes of abelian metric Lie algebras with bijective skewsymmetric derivations (considered as trivial $(\mathfrak l,D_\mathfrak l)$-modules).
\end{remark}

\section{Metric symplectic Lie algebras in special cases}\label{MSLA:Anw}

In this section we determine metric symplectic Lie algebras in spezial cases  up to isomorphism with the help of the classification scheme in section \ref{MSLA:Isomkl}.\\
\quad\\
We call a basis $\{a_1,\dots,a_p,a_{p+1},\dots,a_{p+q}\}$ of $\mathfrak{a}$ orthonormal, if $a_i\bot a_j$ for $i\neq j$ and $\langle a_i,a_i\rangle_\mathfrak{a}=-1$ for $i=1,\dots,p$ and $\langle a_j,a_j\rangle_\mathfrak{a}=1$ for $j=p+1,\dots,p+q$.
In this case $(p,q)$ is called signature and $p$ the index of $\langle\cdot,\cdot\rangle_\mathfrak{a}$.
Let $\langle\cdot,\cdot\rangle_{p,q}$ denote the non-degenerate symmetric bilinear form with signature $(p,q)$ on $\mathds R^{p+q}$, which satisfies that the standard basis of the $\mathds{R}^{p+q}$ is an orthonormal basis.
Then we call $\mathds R^{p,q}:=(\mathds R^{p+q},\langle\cdot,\cdot\rangle_{p,q})$ the standard pseudo-euclidian space.
Of course we will denote $\mathds R^{0,n}$ as $\mathds{R}^n$.
Often, we choose a Witt basis for $\mathds{R}^{1,1}$. This is a basis $\{a_1,a_2\}$ of $\mathds R^2$ of isotropic vectors, which satisfy $\langle a_1,a_2\rangle_{1,1}=1$.

For $z=a+ib\in\mathds{C}$ we set \[D_{z}:=\begin{pmatrix}a&-b\\b&a\end{pmatrix}.\]
Moreover, denote $D_{z_1,\dots,z_n}=\diag(D_{z_1},\dots,D_{z_n})$ for $z_1,\dots,z_n\in\mathds{C}$ the block diagonal matrix with the matrices $D_{z_1}$ till $D_{z_n}$ on the diagonal.
Denote \[N_b=\begin{pmatrix}b&1\\0&b\end{pmatrix}\] for $b\in\mathds{R}$.

For a basis $\{\sigma^1,\dots,\sigma^n\}$ of $\mathfrak l^*$ we write abbrevitory
$\sigma^{ij}:=\sigma^i\wedge\sigma^j$ and $\sigma^{ijk}:=\sigma^i\wedge\sigma^j\wedge\sigma^k$.

\begin{theorem}\label{thm:dim6}
The only non-abelian, metric Lie algebra of dimension smaller than eight having symplectic forms is $\mathfrak{d}_{0,\sigma^{123}}(\mathds{R}^3,0)$.
Its symplectic forms are given (up to isomorphism) by exactly one of the following derivations:
\[D_{0,0}(\diag(a,b,c),0),\quad
D_{0,0}(\diag(N_b,-2b),0),\quad
D_{0,0}(\diag(D_{b+id},-2b),0)\]
mit $a\leq b\leq c$, $a,b,c\neq 0$, $d>0$ und $a+b+c=0$.
\end{theorem}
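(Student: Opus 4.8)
The plan is to run the situation through the classification scheme of Theorem~\ref{Klassifikation}. By Lemma~\ref{lm:msla1} and Theorem~\ref{thm:1} a non-abelian metric symplectic Lie algebra $\mathfrak g$ carries the structure of a nontrivial balanced quadratic extension of a nilpotent $(\mathfrak l,D_\mathfrak l)$ (with $D_\mathfrak l$ bijective) by a trivial $(\mathfrak l,D_\mathfrak l)$-module $(\mathfrak a,D_\mathfrak a)$ with bijective skewsymmetric $D_\mathfrak a$, and by Proposition~\ref{prop:existstndrdmdll} we may take $\mathfrak g=\mathfrak d_{\alpha,\gamma}(\mathfrak l,\mathfrak a)$ with $(\alpha,\gamma,\delta,\epsilon)\in Z^2_{Q+}(\mathfrak l,D_\mathfrak l,\mathfrak a)_b$; note $\mathfrak l\neq 0$, else $\mathfrak i(\mathfrak g)=0$ and $\mathfrak g$ is abelian. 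Since $\dim\mathfrak g=2\dim\mathfrak l+\dim\mathfrak a$ and $\dim\mathfrak a$ is even (the form $\langle\cdot,D_\mathfrak a\cdot\rangle_\mathfrak a$ is symplectic on $\mathfrak a$), the hypothesis $\dim\mathfrak g<8$ forces $\dim\mathfrak l\in\{1,2,3\}$. If $\dim\mathfrak l=1$, then $\alpha\in C^2(\mathfrak l,\mathfrak a)$ and $\gamma\in C^3(\mathfrak l)$ vanish for dimension reasons, so $\mathfrak d_{\alpha,\gamma}$ is abelian. If $\dim\mathfrak l=2$, then $\mathfrak l=\mathds R^2$ and $\gamma\in C^3(\mathds R^2)=0$; moreover $\mathfrak a\neq 0$ (otherwise $\alpha=0$ as well and $\mathfrak g$ is abelian), so $\dim\mathfrak a=2$. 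Writing $v:=\alpha(e_1,e_2)$, the identity $\rd\delta=D^\circ\alpha$ together with $\rd\delta=0$ on $\mathds R^2$ yields $D_\mathfrak a v=(\tr D_\mathfrak l)v$, while condition $(B_0)$ says $\mathds R v$ is $\langle\cdot,\cdot\rangle_\mathfrak a$-nondegenerate, i.e.\ $\langle v,v\rangle_\mathfrak a\neq 0$; skewsymmetry of $D_\mathfrak a$ then gives $\tr D_\mathfrak l=0$, hence $D_\mathfrak a v=0$, contradicting bijectivity of $D_\mathfrak a$.

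Hence $\dim\mathfrak l=3$ and $\dim\mathfrak a=0$, so $\alpha=\delta=0$ and $\mathfrak g=\mathfrak d_{0,\gamma}(\mathfrak l,0)=\mathfrak l^*\oplus\mathfrak l$ with $\gamma\in C^3(\mathfrak l)$. A nilpotent $3$-dimensional Lie algebra is $\mathds R^3$ or the Heisenberg algebra $\mathfrak h_3$. A bijective derivation of $\mathfrak h_3$ acts on the one-dimensional centre by a nonzero scalar $c$ and therefore has trace $2c\neq 0$; since $(0,\gamma)\in Z^2_Q(\mathfrak l,\phi_\mathfrak l,0)$ requires $D_s^\circ\gamma=-(\tr D_\mathfrak l)\gamma=0$, this forces $\gamma=0$, but $(0,0)$ violates condition $(A_0)$ for $\mathfrak h_3$ (take $L_0$ a central generator, $A_0=0$, $Z_0=0$). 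So $\mathfrak l=\mathds R^3$, $\tr D_\mathfrak l=0$, and $\gamma$ is a multiple of $\sigma^{123}$ (the trivial multiple would make $\mathfrak g$ abelian); rescaling the basis of $\mathfrak l$ normalises $\gamma$ to $\sigma^{123}$, so $\mathfrak g\cong\mathfrak n:=\mathfrak d_{0,\sigma^{123}}(\mathds R^3,0)$, the free $2$-step nilpotent Lie algebra on three generators with its canonical inner product. It is indecomposable by Lemma~\ref{lm:decomp}: any nontrivial $D_\mathfrak l$-invariant splitting $\mathds R^3=\mathds R^{n_1}\oplus\mathds R^{n_2}$ has $n_i\le 2$, so $C^3(\mathds R^{n_i})=0$ and $\sigma^{123}$ cannot lie in the associated sum. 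Conversely, once $\tr D_\mathfrak l=0$ one checks readily that $(0,\sigma^{123},0,0)\in Z^2_{Q+}(\mathds R^3,D_\mathfrak l,0)_b$, so $\mathfrak n$ does admit symplectic forms by Theorem~\ref{thm:3} and Lemma~\ref{lm:msla1}; this proves the first assertion.

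For the list of symplectic forms I would apply Theorem~\ref{ddIsomorph}: they are in bijection with the $G(\mathds R^3,D_\mathfrak l,0)$-orbits on $H^2_{Q+}(\mathds R^3,D_\mathfrak l,0)_0$, with $D_\mathfrak l$ ranging over conjugacy classes of bijective real $3\times 3$ matrices of trace zero. The pivotal computation is that $D_s^\circ$ acts on $C^2(\mathds R^3)$ by $D_s^\circ(\sigma^i\wedge\sigma^j)=-(\lambda_i+\lambda_j)\sigma^i\wedge\sigma^j=\lambda_k\sigma^i\wedge\sigma^j$, where $\lambda_1,\lambda_2,\lambda_3$ are the eigenvalues of $D_\mathfrak l$ and $\{i,j,k\}=\{1,2,3\}$; all $\lambda_i\neq 0$, so $D_s^\circ$ is invertible on $C^2(\mathds R^3)$. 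Consequently $D_s^\circ\epsilon=0$ forces $\epsilon=0$, the group $C^1_Q(\mathds R^3,\phi_\mathfrak l,0)$ is trivial, and — using that the differential on $C^*(\mathds R^3)$ with trivial coefficients vanishes — $Z^2_{Q+}(\mathds R^3,D_\mathfrak l,0)_b=\{(0,c\,\sigma^{123},0,0)\mid c\neq 0\}$, all indecomposable; the scalar matrices in $G$ act on $c$ by $c\mapsto(\det S)c$, collapsing this to a single $G$-orbit, represented by $[0,\sigma^{123},0,0]$, whose standard model carries the derivation $D_{0,0}(D_\mathfrak l,0)$. The same determinant argument shows $(\mathfrak n,D_{0,0}(D_\mathfrak l,0))\cong(\mathfrak n,D_{0,0}(D'_\mathfrak l,0))$ iff $D_\mathfrak l$ and $D'_\mathfrak l$ are conjugate. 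Running through the real Jordan forms of bijective trace-zero real $3\times 3$ matrices — $\diag(a,b,c)$ with $a+b+c=0$, $a,b,c\neq 0$, $a\le b\le c$; $\diag(N_b,-2b)$ with $b\neq 0$; $\diag(D_{b+id},-2b)$ with $b\neq 0$, $d>0$ — then gives the three stated families.

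The step I expect to cause the most trouble is the normalisation $\epsilon=0$; the clean way out, as above, is the observation that under the standing hypotheses ($D_\mathfrak l$ bijective, $\tr D_\mathfrak l=0$) the operator $D_s^\circ$ is already invertible on $C^2(\mathds R^3)$, so there is no nonzero $\epsilon$ with $D_s^\circ\epsilon=0$ in the first place and the $\gamma$-data alone determine the metric symplectic Lie algebra. The remaining points — excluding $\mathfrak h_3$, verifying $(A_0)$ and $(B_0)$ for $(0,\sigma^{123})$ over $\mathds R^3$, and the matrix bookkeeping — are routine.
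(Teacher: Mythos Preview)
Your argument is correct and follows the paper's own approach via the classification scheme of Theorem~\ref{Klassifikation}; your treatments of the cases $\dim\mathfrak l\le 2$ and $\mathfrak l=\mathfrak h_3$ are in fact slightly more streamlined than the paper's (the paper proves lemmas for arbitrary $\mathfrak a$, needed for Theorems~\ref{thm:dim8}--\ref{thm:index3}, whereas you only handle the $\mathfrak a$ that actually occur in dimension $<8$). One small slip to patch: in the $\dim\mathfrak l=2$ step, condition $(B_0)$ alone does not yield $\langle v,v\rangle_\mathfrak a\neq 0$ when $v=0$ (the zero subspace is vacuously nondegenerate), so you should first invoke $(A_0)$ to exclude $\alpha=0$ --- with $\gamma=0$ any $L_0\neq 0$, $A_0=0$, $Z_0=0$ witnesses the failure --- after which $(B_0)$ indeed gives $\langle v,v\rangle_\mathfrak a\neq 0$ and your contradiction with the bijectivity of $D_\mathfrak a$ goes through.
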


\begin{theorem}\label{thm:dim8}
The set of $(\mathfrak d_{\alpha,\gamma}(\mathfrak l,\mathfrak a),D_{\delta,\epsilon}(D_\mathfrak l,D_\mathfrak a))$, where\\
$\mathfrak{l}=\mathds{R}^3$, $\mathfrak{a}\in\{\mathds{R}^2,\mathds{R}^{2,0}\}$, $D_\mathfrak{a}=D_{is}$ and 
\begin{itemize}
\item $D_\mathfrak{l}=\diag(D_{b+is},-b)$,
 $(\alpha,\gamma,\delta,\epsilon)=(\sigma^{13}\otimes a_1+\sigma^{23}\otimes a_2,0,0,0)$,
\end{itemize}
$\mathfrak{l}=\mathds{R}^3$, $\mathfrak{a}=\mathds{R}^{1,1}$, $D_\mathfrak{a}=\diag(s,-s)$ and
\begin{itemize}
\item $D_\mathfrak{l}=\diag(s-e,e,-s-e)$,
 $(\alpha,\gamma,\delta,\epsilon)=(\sigma^{12}\otimes a_1+\sigma^{23}\otimes a_2,0,0,0)$,
\item $D_\mathfrak{l}=\diag(-s,2s,-3s)$,
 $(\alpha,\gamma,\delta,\epsilon)=(\sigma^{12}\otimes a_1+\sigma^{23}\otimes a_2,0,\sigma^1\otimes a_2,0)$,
\item $D_\mathfrak{l}=\diag(3s,-2s,s)$,
 $(\alpha,\gamma,\delta,\epsilon)=(\sigma^{12}\otimes a_1+\sigma^{23}\otimes a_2,0,\sigma^3\otimes a_1,0)$,
\end{itemize}
$\mathfrak{l}=\mathds{R}^3$, $\mathfrak{a}=\mathds{R}^{1,1}$, $D_\mathfrak{a}=\diag(\pm s,\mp s)$ and
\begin{itemize}
\item $D_\mathfrak{l}=\diag(N_{\pm s/2},\mp \frac{3}{2}s)$,
 $(\alpha,\gamma,\delta,\epsilon)=(\sigma^{12}\otimes a_1+\sigma^{23}\otimes a_2,0,0,0)$, 
 \item $D_\mathfrak{l}\in\left\{ \diag(\pm s,e,\mp s-e), \diag(N_{\pm s},\mp 2s), \diag(\pm s, N_{\mp s/2}), \diag(\pm s,D_{\mp s+id}) \right\}$,\\
$(\alpha,\gamma,\delta,\epsilon)=(0,\sigma^{123},\sigma^1\otimes a_1,0)$ 
\end{itemize}
with $a\leq b\leq c$, $a,b,c\neq 0$, $a+b+c=0$, $d,s>0$, $e\notin\{0,s,-s\}$ 
forms a system of representatives of the isomorphism classes of indecomposable non-abelian metric Lie algebras of dimension $8$ with bijective skewsymmetric derivations.
Here $\{a_1,a_2\}$ is a Witt basis for $\mathfrak a=\mathds R^{1,1}$ and an orthonormal basis for $\mathfrak a=\mathds R^2$ or $\mathds R^{2,0}$.
Moreover, denote $\{\sigma^1,\sigma^2,\sigma^3\}$ the dual basis of the standard basis of $\mathds R^3$.
\end{theorem}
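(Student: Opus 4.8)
The plan is to invoke the classification scheme of Theorem~\ref{Klassifikation}: up to isomorphism the indecomposable non-abelian metric symplectic Lie algebras of dimension $8$ are the standard models $(\mathfrak d_{\alpha,\gamma}(\mathfrak l,\mathfrak a),D_{\delta,\epsilon}(D_\mathfrak l,D_\mathfrak a))$ with $[\alpha,\gamma,\delta,\epsilon]\in H^2_{Q+}(\mathfrak l,D_\mathfrak l,\mathfrak a)_0$, taken modulo the $G(\mathfrak l,D_\mathfrak l,\mathfrak a)$-action, where $\mathfrak l$ runs through the nilpotent Lie algebras with bijective derivation $D_\mathfrak l$ and $(\mathfrak a,D_\mathfrak a)$ through the trivial $(\mathfrak l,D_\mathfrak l)$-modules with bijective skewsymmetric $D_\mathfrak a$. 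Since $\dim\mathfrak d_{\alpha,\gamma}(\mathfrak l,\mathfrak a)=2\dim\mathfrak l+\dim\mathfrak a$ and a bijective skewsymmetric operator on a pseudo-Euclidean space forces its domain to be even-dimensional, dimension $8$ leaves only $(\dim\mathfrak l,\dim\mathfrak a)\in\{(1,6),(2,4),(3,2),(4,0)\}$.

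The first step is to reduce to $\mathfrak l=\mathds R^3$. If $\dim\mathfrak l\le 1$, then $\mathfrak l$ is abelian, $C^2(\mathfrak l,\mathfrak a)=C^3(\mathfrak l)=0$, so $\alpha=\gamma=0$ and $\mathfrak d_{0,0}$ is abelian. If $\dim\mathfrak l=2$, the only nilpotent candidate is $\mathds R^2$; then $\gamma=0$, $\alpha$ is given by $v:=\alpha(e_1,e_2)$, condition $(B_0)$ forces $\langle v,v\rangle_\mathfrak a\neq 0$ unless $v=0$, and $D_s^\circ\alpha=0$ yields ${D_\mathfrak a}_sv=\operatorname{tr}({D_\mathfrak l}_s)\,v$; since ${D_\mathfrak a}_s$ is semisimple and skewsymmetric, a one-dimensional eigenspace of it for a real eigenvalue is totally isotropic unless that eigenvalue is $0$, and $D_\mathfrak a$ bijective excludes the eigenvalue $0$, so $v=0$ and $\mathfrak d$ is again abelian. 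If $\dim\mathfrak l=4$, then $\mathfrak a=0$ and $\mathfrak l\in\{\mathds R^4,\mathfrak h_1\oplus\mathds R,\mathfrak n_4\}$, the three $4$-dimensional nilpotent Lie algebras (with $\mathfrak h_1$ the Heisenberg algebra and $\mathfrak n_4$ the filiform one), and one shows that no $\gamma\in Z^3(\mathfrak l)$ gives a balanced extension: for $\mathds R^4$ every nonzero closed $3$-form is $\mathrm{GL}_4$-equivalent to $\sigma^{123}$, whose radical contains $e_4$, violating $(A_0)$; for $\mathfrak n_4$ a short computation shows that $D_s^\circ\gamma=0$ with $D_\mathfrak l$ bijective forces $\gamma$ to be a coboundary, hence $\mathfrak d_{0,\gamma}$ abelian; and for $\mathfrak h_1\oplus\mathds R$ the ideal $\mathfrak l^2$ is one-dimensional and central, which makes the hypothesis of $(A_1)$ hold for $L_0\in\mathfrak l^2\setminus\{0\}$ no matter what $\gamma$ is. Finally, for the remaining case $(\dim\mathfrak l,\dim\mathfrak a)=(3,2)$ the only other $3$-dimensional nilpotent Lie algebra with bijective derivation, $\mathfrak h_1$, is excluded as well: $(B_0)$ requires $\langle\alpha(e_1,e_3),\alpha(e_2,e_3)\rangle$ to be non-degenerate inside the two-dimensional $\mathfrak a$, while $(A_1)$ needs $\alpha(\cdot,e_3)\neq 0$, and $D_s^\circ\alpha=0$ confines $\alpha(e_1,e_3)$ and $\alpha(e_2,e_3)$ to eigenspaces of ${D_\mathfrak a}_s$ for the reals $2\lambda_1+\lambda_2$ and $\lambda_1+2\lambda_2$; confronting this with the possible shapes of a bijective skewsymmetric $D_\mathfrak a$ on a $2$-dimensional space forces $\lambda_1+\lambda_2$, hence the eigenvalue of ${D_\mathfrak l}_s$ on $\mathfrak l^2=[\mathfrak l,\mathfrak l]$, to be $0$, contradicting bijectivity of $D_\mathfrak l$. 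Hence $\mathfrak l=\mathds R^3$ and $\dim\mathfrak a=2$.

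For $\mathfrak l=\mathds R^3$ I would next enumerate the data. A bijective skewsymmetric operator on a $2$-dimensional pseudo-Euclidean space is, up to isometry, $D_{is}$ on $\mathds R^2$ or on $\mathds R^{2,0}$, or $\diag(\pm s,\mp s)$ on $\mathds R^{1,1}$ in a Witt basis, with $s>0$; this is the source of the three families of $(\mathfrak a,D_\mathfrak a)$ in the statement. Because $\mathds R^3$ is abelian, $\rd\alpha=0$, $\langle\alpha\wedge\alpha\rangle\in C^4(\mathds R^3)=0$, and $\rd\delta=\rd\epsilon=0$, so a cocycle $(\alpha,\gamma,\delta,\epsilon)\in Z^2_{Q+}(\mathds R^3,D_\mathfrak l,\mathfrak a)$ is precisely a quadruple with $D^\circ\alpha=0$, $\langle\alpha\wedge\delta\rangle=0$, $\gamma=c\,\sigma^{123}$ where $c\neq 0$ is allowed only if $\operatorname{tr}D_\mathfrak l=0$, and $D_s^\circ\delta=D_s^\circ\epsilon=0$. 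One then runs through the real Jordan normal forms of a bijective derivation $D_\mathfrak l$ of $\mathds R^3$ -- the diagonalizable type $\diag(a,b,c)$, the type $\diag(N_b,c)$ with a nilpotent block, and the complex type $\diag(D_{b+id},c)$ -- solving $D^\circ\alpha=0$ under the eigenvalue restriction imposed by $D_\mathfrak a$, discarding the $(\alpha,\delta)$ for which $(A_0)$, $(B_0)$ or indecomposability fails, and reducing modulo the centralizer action $G(\mathds R^3,D_\mathfrak l,\mathfrak a)$. The families with $\gamma=0$ should come from $\alpha$ of rank two (image all of $\mathfrak a$, so $(B_0)$ is automatic), while the single family with $\gamma=\sigma^{123}$ should occur exactly when $\operatorname{tr}D_\mathfrak l=0$ and $\alpha=0$, the balancing being carried by $\delta=\sigma^1\otimes a_1$ through the non-degeneracy of the $3$-form $\sigma^{123}$ on $\mathds R^3$. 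Pairwise non-isomorphy then follows from Theorem~\ref{ddIsomorph}, using the isometry type of $\mathfrak a$, the conjugacy class of $D_\mathfrak l$ and the $G$-orbit of the cocycle as invariants.

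The hard part is the case analysis of this last step: for each bijective $D_\mathfrak l$ on $\mathds R^3$ one must pin down the entire solution space of $D^\circ\alpha=0$ compatible with one of the three $(\mathfrak a,D_\mathfrak a)$, which is sensitive to whether ${D_\mathfrak l}_s$ is diagonalizable over $\mathds R$, has a repeated real eigenvalue, or has a complex conjugate pair. In the non-diagonalizable cases one passes to the complexification and tracks generalized eigenspaces, and the eigenvalue-sum conditions $\lambda_i+\lambda_j\in\operatorname{spec}(D_\mathfrak a)$ are precisely what produces the parameter constraints $a+b+c=0$, $e\notin\{0,\pm s\}$, $d,s>0$ appearing in the list. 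Verifying $(A_0)$, $(B_0)$ and indecomposability in every surviving case, and checking that no two entries of the list are related by an element of $G$, is routine but lengthy.
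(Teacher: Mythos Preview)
Your outline follows the paper's approach closely: reduce to $\mathfrak l=\mathds R^3$ via Theorem~\ref{Klassifikation}, list the two-dimensional $(\mathfrak a,D_\mathfrak a)$, and run the case analysis on $D_\mathfrak l$. Two points, however, are genuine gaps rather than routine bookkeeping.

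First, your treatment of the role of $\delta$ is wrong. Balancedness is a condition on $(\alpha,\gamma)$ alone (Definition of $Z^2_{Q+}(\mathfrak l,D_\mathfrak l,\mathfrak a)_b$ and Lemma~\ref{lm:30}); $\delta$ never ``carries the balancing''. When $\operatorname{tr}D_\mathfrak l=0$ the cocycle $(0,\sigma^{123},0,0)$ is already balanced by the non-degeneracy of $\sigma^{123}$ in $(A_0)$, but with $\mathfrak a\neq 0$ it is \emph{decomposable}: it is the direct sum of the six-dimensional piece $\mathfrak d_{0,\sigma^{123}}(\mathds R^3,0)$ and the abelian $(\mathfrak a,D_\mathfrak a)$. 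A nonzero $\delta$ is exactly what obstructs this splitting. This is why the $\gamma=\sigma^{123}$ family in the statement appears only for $\mathfrak a=\mathds R^{1,1}$ (where $D_\mathfrak a$ has real eigenvalues, so $D_s^\circ\delta=0$ can be solved with $\delta\neq 0$) and never for $\mathfrak a=\mathds R^2,\mathds R^{2,0}$ (where the eigenvalues of $D_\mathfrak a$ are purely imaginary and equation~(\ref{eq:deltaneq0}) forces $\delta=0$). You need this argument, and it is absent from your sketch.

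Second, you do not establish the dichotomy that organises the paper's $\mathds R^3$ computation: for a balanced cocycle, $\alpha=0\Leftrightarrow\gamma\neq 0\Leftrightarrow\operatorname{tr}D_\mathfrak l=0$. The direction $\gamma\neq 0\Rightarrow\operatorname{tr}D_\mathfrak l=0$ you have; but $\operatorname{tr}D_\mathfrak l=0\Rightarrow\alpha=0$ requires the calculation $(\lambda_i+\lambda_j+\lambda_p+\lambda_k)\langle\alpha(L_i,L_j),\alpha(L_p,L_k)\rangle=0$ from equation~(\ref{eq:a1}), which with $\sum\lambda_i=0$ kills all inner products on $\alpha(\mathfrak l,\mathfrak l)$ and then $(B_0)$ forces $\alpha=0$. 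Without this you cannot exclude mixed cocycles with $\alpha$ of rank one and $\gamma\neq 0$.

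A smaller error: your exclusion of $\mathfrak n_4$ is incorrect as stated. That $\gamma$ is a coboundary does not make $\mathfrak d_{0,\gamma}(\mathfrak n_4,0)$ abelian (it is a semidirect product $\mathfrak n_4^*\rtimes\mathfrak n_4$). The paper disposes of all non-abelian four-dimensional nilpotent $\mathfrak l$ uniformly by observing that some $\mathfrak l^{m+1}$ is one-dimensional, so $(A_m)$ fails regardless of $\gamma$.
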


\begin{theorem}\label{thm:8}
There are no non-abelian metric symplectic Lie-Algebras of index smaller than three.
\end{theorem}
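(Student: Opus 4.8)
\emph{Plan of proof.} The idea is to run the classification machinery developed above. Let $(\mathfrak g,\langle\cdot,\cdot\rangle,\omega)$ be a non-abelian metric symplectic Lie algebra, equivalently (Lemma \ref{lm:msla1}) a non-abelian nilpotent metric Lie algebra with bijective skewsymmetric derivation. By Theorem \ref{thm:1} and Proposition \ref{prop:existstndrdmdll}, $\mathfrak g$ is isomorphic to a standard model $\mathfrak d_{\alpha,\gamma}(\mathfrak l,\mathfrak a)$ for a nilpotent Lie algebra $\mathfrak l$ with bijective derivation $D_\mathfrak l$, a trivial $(\mathfrak l,D_\mathfrak l)$-module $(\mathfrak a,D_\mathfrak a)$ with bijective skewsymmetric $D_\mathfrak a$, and a balanced cocycle $(\alpha,\gamma,\delta,\epsilon)\in Z^2_{Q+}(\mathfrak l,D_\mathfrak l,\mathfrak a)_b$; moreover $\mathfrak l\neq\{0\}$, since $\mathfrak d_{\alpha,\gamma}(\{0\},\mathfrak a)$ is the abelian Lie algebra $\mathfrak a$. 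On $\mathfrak d_{\alpha,\gamma}=\mathfrak l^*\oplus\mathfrak a\oplus\mathfrak l$ the inner product restricts to the canonical hyperbolic pairing on $\mathfrak l^*\oplus\mathfrak l$, which has signature $(\dim\mathfrak l,\dim\mathfrak l)$, and to $\langle\cdot,\cdot\rangle_\mathfrak a$ on the orthogonal summand $\mathfrak a$; hence the index of $\langle\cdot,\cdot\rangle$ equals $\dim\mathfrak l+p$, where $p$ is the index of $\langle\cdot,\cdot\rangle_\mathfrak a$. If $\dim\mathfrak l\geq 3$ we are done, so it remains to treat $\dim\mathfrak l\in\{1,2\}$. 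The case $\dim\mathfrak l=1$ cannot occur: then $C^2(\mathfrak l,\mathfrak a)=0=C^3(\mathfrak l)$, so $\alpha=0$ and $\gamma=0$, and with $\mathfrak l$ abelian and $\rho$ trivial all brackets of $\mathfrak d_{0,0}(\mathfrak l,\mathfrak a)$ vanish, contradicting that $\mathfrak g$ is non-abelian.

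\emph{The main case, $\dim\mathfrak l=2$.} A two-dimensional nilpotent Lie algebra is abelian, so $\mathfrak l=\mathds R^2$; then $\gamma\in C^3(\mathfrak l)=0$ and $\alpha\in C^2(\mathds R^2,\mathfrak a)\cong\mathfrak a$ is encoded by the single vector $A_0:=\alpha(e_1,e_2)$, and non-abelianness of $\mathfrak g$ forces $A_0\neq 0$. The plan is to exploit the cocycle condition $D^\circ_s\alpha=0$ built into the definition of $Z^2_Q(\mathfrak l,\phi_\mathfrak l,\mathfrak a)$. Writing out $D^\circ_s\alpha(e_1,e_2)=0$ and using $\alpha(e_i,e_i)=0$ one finds ${D_\mathfrak a}_s A_0=\tr(D_\mathfrak l)\,A_0$, i.e.\ $A_0$ is an eigenvector of the semisimple skewsymmetric operator ${D_\mathfrak a}_s$ for the \emph{real} eigenvalue $\tr(D_\mathfrak l)$. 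Suppose now $p=0$, i.e.\ $\langle\cdot,\cdot\rangle_\mathfrak a$ is positive definite. Then $D_\mathfrak a$ is skewsymmetric with respect to a euclidean product, so ${D_\mathfrak a}_s=D_\mathfrak a$ and $\langle D_\mathfrak a A_0,A_0\rangle_\mathfrak a=0$; combined with $D_\mathfrak a A_0=\tr(D_\mathfrak l)A_0$ and $\langle A_0,A_0\rangle_\mathfrak a>0$ this gives $\tr(D_\mathfrak l)=0$, hence $A_0\in\ker D_\mathfrak a$, contradicting bijectivity of $D_\mathfrak a$ together with $A_0\neq 0$. Therefore $p\geq 1$ and the index of $\mathfrak g$ is $\dim\mathfrak l+p\geq 3$.

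\emph{Conclusion and the crux.} Putting the cases together yields the statement. The only non-routine point is the case $\dim\mathfrak l=2$: the bookkeeping with the standard model (the index formula and the forced vanishing of $\alpha,\gamma$ in low dimension) is automatic, whereas the genuine input is the interplay between the invariance condition $D^\circ_s\alpha=0$ and the fact that $D_\mathfrak a$ is a \emph{bijective skewsymmetric} map — on a euclidean space such a map cannot have a (nonzero) real eigenvector. I expect no further difficulty beyond making these two observations precise.
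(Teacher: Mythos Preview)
Your proof is correct. Both your argument and the paper's rest on the same computation coming from $D^\circ_s\alpha=0$: for $\mathfrak l=\mathds R^2$ one obtains ${D_\mathfrak a}_s A_0=\tr(D_\mathfrak l)\,A_0$ (equivalently, $(\lambda_1+\lambda_2)\langle A_0,A_0\rangle_\mathfrak a=0$). The difference lies in how this is exploited. The paper proves the stronger Lemma~\ref{lm:40}, namely that $Z^2_{Q+}(\mathds R,D_\mathfrak l,\mathfrak a)_b$ and $Z^2_{Q+}(\mathds R^2,D_\mathfrak l,\mathfrak a)_b$ are empty for \emph{every} $\mathfrak a$, by concluding that $\alpha(\mathfrak l,\mathfrak l)$ is either zero or one-dimensional isotropic and then invoking the balance conditions $(A_0)$ and $(B_0)$. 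You instead observe that for the index bound only the case $p=0$ (euclidean $\mathfrak a$) with $\dim\mathfrak l=2$ matters, and there a bijective skewsymmetric operator admits no nonzero real eigenvector, forcing $A_0=0$ and hence abelianness --- so you never need to touch $(A_0)$ or $(B_0)$. Your route is a bit more elementary and tailored to Theorem~\ref{thm:8}; the paper's route buys the extra information that $\mathds R$ and $\mathds R^2$ are never admissible, which it reuses in the proofs of Theorems~\ref{thm:dim6}, \ref{thm:dim8} and \ref{thm:index3}.
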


\begin{theorem}\label{thm:index3}
The only indecomposable non-abelian metric symplectic Lie algebras with index 3 (up to isomorphism) are given as the six-dimensional metric Lie algebra $\mathfrak d_{0,\sigma^{123}}(\mathds R^3,0)$ with derivations $D_{0,0}(D_\mathfrak l,0)$, where $D_\mathfrak l\in\{\diag(a,b,c),\diag(N_b,-2b),\diag(D_{b+is},-2b)\}$, 
or the eight-dimensional metric Lie algebra $\mathfrak d_{\sigma^{13}\otimes a_1+\sigma^{23}\otimes a_2,0}(\mathds R^3,\mathds R^2)$ with derivations
$D_{0,0}(\diag(D_{b+is},-b),D_{is})$
for $a\leq b\leq c$, $a,b,c\neq 0$, $a+b+c=0$ and $s>0$.
Here $\{a_1,a_{2}\}$ is an orthonormal basis of $\mathfrak a=\mathds R^{2}$ and $\{\sigma^1,\sigma^2,\sigma^3\}$ the dual basis of the standard basis of $\mathds R^3$.
\end{theorem}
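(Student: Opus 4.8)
The plan is to run the classification scheme of Theorem~\ref{Klassifikation}. Since the inner product of a standard model $\mathfrak d_{\alpha,\gamma}(\mathfrak l,\mathfrak a)$ restricts to a hyperbolic form on $\mathfrak l^*\oplus\mathfrak l$, the index of $\mathfrak d_{\alpha,\gamma}(\mathfrak l,\mathfrak a)$ equals $\dim\mathfrak l$ plus the index of $\langle\cdot,\cdot\rangle_\mathfrak a$. Hence an indecomposable non-abelian metric symplectic Lie algebra of index $3$ has $\dim\mathfrak l\le 3$, and I would treat $\dim\mathfrak l=0,1,2,3$ in turn. For $\dim\mathfrak l\le 1$ the bracket relations of the standard model force $\mathfrak d_{\alpha,\gamma}(\mathfrak l,\mathfrak a)$ to be abelian, so nothing occurs. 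For $\dim\mathfrak l=2$, hence $\mathfrak l=\mathds R^2$ and $\mathfrak a$ of index $1$, I would first show a balanced extension forces $\langle\alpha,\alpha\rangle_\mathfrak a\neq0$: otherwise a short computation of the lower central series of $\mathfrak g=\mathfrak d_{\alpha,\gamma}$ gives $\mathfrak i(\mathfrak g)\supsetneq\mathfrak l^*$, or $\mathfrak g$ is abelian. But $D^\circ_s\alpha=0$ forces $D_{\mathfrak a,s}\alpha=\operatorname{tr}(D_\mathfrak l)\,\alpha$, and since $D_{\mathfrak a,s}$ is skewsymmetric with no zero eigenvalue this is impossible for $\alpha\neq0$. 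So $\dim\mathfrak l=3$ and $\langle\cdot,\cdot\rangle_\mathfrak a$ is positive definite; in particular $D_{\mathfrak a,s}$ has only nonzero purely imaginary eigenvalues and $\mathfrak a\cong\mathds R^{2k}$.

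The core step is then to show $\dim\mathfrak g\le 8$ and $\mathfrak l\cong\mathds R^3$, after which Theorems~\ref{thm:dim6} and~\ref{thm:dim8} finish the argument. The key mechanism is that $D^\circ_s\alpha=0$ and $D^\circ_s\delta=0$ force every component $\alpha(e_i,e_j)$, $\delta(e_i)$ in a basis of eigenvectors of $D_{\mathfrak l,s}$ to be a $D_{\mathfrak a,s}$-eigenvector whose eigenvalue is a sum $\lambda_i+\lambda_j$, resp.\ $\lambda_i$, of eigenvalues of $D_{\mathfrak l,s}$; as $\mathfrak a$ is positive definite, such an eigenvector vanishes unless that $\lambda$-sum is purely imaginary and nonzero. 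For $\mathfrak l=\mathds R^3$ this gives: if $D_{\mathfrak l,s}$ has only real eigenvalues then $\alpha=\delta=0$; if $D_{\mathfrak l,s}$ has a complex pair $b\pm is$ with third eigenvalue $c$, then $\alpha(e_1,e_2)=0$ always, and $\alpha(e_1,e_3),\alpha(e_2,e_3)$ are nonzero only if $b+c=0$ and $b\neq0$, in which case they span a two-dimensional $D_\mathfrak a$-invariant subspace on which $D_\mathfrak a$ has rotation type $D_{is}$. Likewise $D^\circ_s\gamma=0$ forces $\gamma=0$ unless $\operatorname{tr}(D_{\mathfrak l,s})=0$, and $D^\circ_s\epsilon=0$ then kills $\epsilon$. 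Indecomposability forces $\mathfrak a$ to equal the $D_\mathfrak a$-invariant closure of $\operatorname{im}\alpha+\operatorname{im}\delta$ (an orthogonal complement would split off as an abelian summand), which by the above is at most two-dimensional; and since $\gamma\neq0$ entails $\operatorname{tr}(D_{\mathfrak l,s})=0$, hence $\alpha=0$ and then $\mathfrak a=0$, one gets $\dim\mathfrak g\in\{6,8\}$. The same eigenvalue analysis applied to the three-dimensional Heisenberg algebra $\mathfrak h_3$ gives $\alpha=0$ and $\gamma=0$, so $\mathfrak d_{0,0}(\mathfrak h_3,\mathfrak a)$ has $\mathfrak a$ as an abelian direct summand while its canonical quotient is $\mathds R^3$, not $\mathfrak h_3$; hence no balanced indecomposable cocycle lives over $\mathfrak h_3$ and $\mathfrak l\cong\mathds R^3$.

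It then remains to read off the representatives. For the real-spectrum derivations of $\mathds R^3$ one obtains exactly $\mathfrak d_{0,\sigma^{123}}(\mathds R^3,0)$ with $D_{0,0}(\diag(a,b,c),0)$ or $D_{0,0}(\diag(N_b,-2b),0)$ (the single $3\times 3$ Jordan block has nonzero trace and is excluded), and for a complex pair one obtains $\mathfrak d_{0,\sigma^{123}}(\mathds R^3,0)$ with $D_{0,0}(\diag(D_{b+is},-2b),0)$ in the trace-zero case, and $\mathfrak d_{\sigma^{13}\otimes a_1+\sigma^{23}\otimes a_2,0}(\mathds R^3,\mathds R^2)$ with $D_{0,0}(\diag(D_{b+is},-b),D_{is})$ otherwise; here the $G(\mathfrak l,D_\mathfrak l,\mathfrak a)$-action is used to rescale $\{a_1,a_2\}$ to an orthonormal basis and to normalize $s>0$ and $a\le b\le c$. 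One finally checks that these cocycles satisfy the balanced conditions and are indecomposable, and that the four families are pairwise non-isomorphic; alternatively this last bookkeeping is subsumed by Theorems~\ref{thm:dim6} and~\ref{thm:dim8} once $\dim\mathfrak g\le 8$ is established, since the index-$3$ members of those classifications are precisely the algebras in the statement.

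The step I expect to be the main obstacle is the dimension reduction in the second paragraph: making the eigenvalue argument airtight for every conjugacy class of bijective derivation of a three-dimensional nilpotent Lie algebra, including the non-semisimple ones and $\mathfrak h_3$, so that $\dim\mathfrak a\le 2$ genuinely follows from indecomposability, and then organizing the resulting finite list of cocycles so that the $G$-action reduction and the non-isomorphy of the families can be verified without overlaps.
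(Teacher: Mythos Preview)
Your outline is correct and matches the paper's approach: the paper likewise reduces to $\dim\mathfrak l=3$ via the emptiness of balanced cocycles for $\mathfrak l\in\{\mathds R,\mathds R^2,\mathfrak h_3\}$, then runs the eigenvalue analysis on $D^\circ_s\alpha$, $D^\circ_s\gamma$, $D^\circ_s\delta$, $D^\circ_s\epsilon$ (organized there around the dichotomy $\tr D_\mathfrak l=0$ versus $\tr D_\mathfrak l\neq0$, which is shown equivalent to $\alpha=0$ versus $\gamma=0$) and normalizes via explicit elements of $G$. One caveat: in the paper Theorems~\ref{thm:dim6}, \ref{thm:dim8}, \ref{thm:8} and \ref{thm:index3} are proved in a single combined argument, so your ``alternatively'' appeal to \ref{thm:dim6} and \ref{thm:dim8} is not actually available as a shortcut---but your direct route (computing $H^2_{Q+}(\mathds R^3,D_\mathfrak l,\mathds R^{2n})_b/G$ and using indecomposability to force $n\le 1$) is precisely what the paper does.
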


\begin{proof}[Proof of all theorems]
The computations are so overseeable that we don't have to restrict the computations to indecomposable metric symplectic Lie algebras. We simply compute all non-abelian metric symplectic Lie algebras and can easily decide afterwords which are indecomposable.

For every $4$-dimensional non-abelian nilpotent Lie algebra $\mathfrak{l}$ there is an $m\in\mathds{N}$ such that $\mathfrak{l}^{m+1}$is one-dimensional.
Thus $Z^2_{Q+}(\mathfrak l,D_\mathfrak l,0)_b$ is empty for every derivation $D_\mathfrak{l}$ of $\mathfrak{l}$, since ($A_m$) is not satisfied.
For $\mathfrak l=\mathds R^4$ and $\mathfrak{a}=\{0\}$ we have for every $\gamma\in C^3(\mathfrak{l})$ an isomorphism of pairs $(S,U)$ such that
$(S,U)^*(0,\gamma)=(0,0)$ or $(S,U)^*(0,\gamma)=(0,\sigma^2\wedge\sigma^3\wedge\sigma^4)$.
Since both cocycles $(0,0)$ and $(0,\sigma^2\wedge\sigma^3\wedge\sigma^4)$ do not satisfy ($A_0$), the set $H^2_{Q+}(\mathds R^4,D_\mathfrak l,0)_b$ is empty.

\begin{lemma}\label{lm:40}
The set of balanced cocycles $Z^2_{Q+}(\mathds R,D_\mathfrak l,\mathfrak a)_b$ and $Z^2_{Q+}(\mathds R^2,D_\mathfrak l,\mathfrak a)_b$ does not contain any elements for every bijective derivation $D_\mathfrak l$ of $\mathds R^2$ and for every abelian metric Lie algebra $\mathfrak a$ with bijective skewsymmetric derivation $D_\mathfrak a$.
\end{lemma}

\begin{proof}
Since $H^2_Q(\mathds R,\phi_\mathfrak l,\mathfrak a)$ contains only the element $[0,0]$ for every metric vector space $\mathfrak a$ and therefor does not have any balanced elements, the set $Z^2_{Q+}(\mathds R,D_\mathfrak l,\mathfrak a)_b$ is also empty.
Now, suppose $(\alpha,\gamma,\delta,\epsilon)\in Z^2_{Q+}(\mathds R^2,D_\mathfrak l,\mathfrak a)$ and let ${D_\mathfrak l}_s$ be diagonalizable over $\mathds R$ at the moment. I. e. there is a diagonal basis $\{L_1,L_2\}$ of $\mathds R^2$ for ${D_\mathfrak l}_s$ such that ${D_\mathfrak l}_sL_1=\lambda_1 L_1$ and ${D_\mathfrak l}_sL_2=\lambda_2 L_2$.
Because of $D^\circ_s\alpha=0$ we have
\begin{align*}
0&=\langle (D^\circ_s\alpha)(L_1,L_2),\alpha(L_1,L_2)\rangle\\
&=\langle {D_\mathfrak a}_s(\alpha(L_1,L_2))-\alpha({D_\mathfrak  l}_sL_1,L_2)-\alpha(L_1,{D_\mathfrak l}_sL_2),\alpha(L_1,L_2)\rangle\\
&=\langle -\alpha({D_\mathfrak  l}_sL_1,L_2)-\alpha(L_1,{D_\mathfrak l}_sL_2),\alpha(L_1,L_2)\rangle\\
&=-(\lambda_1+\lambda_2)\langle\alpha(L_1,L_2),\alpha(L_1,L_2)\rangle.
\end{align*}
So, on the one hand
$\alpha=0$ (which is in addition the case for $\lambda_1+\lambda_2=0$, since $D^\circ\alpha=0$ and $D_\mathfrak a$ is bijective) or on the other hand
$\alpha(\mathds R^2,\mathds R^2)$ is an one-dimensional isotropic subspace of $\mathfrak a$.

Now, suppose ${D_\mathfrak l}_s$ is not diagonalizable over $\mathds R$. This means that there is a basis $\{L_1,L_2\}$ of $\mathfrak l$ such that ${D_\mathfrak l}_sL_1=aL_1+bL_2$ and ${D_\mathfrak l}_sL_2=-bL_1+aL_2$ for $a,b\in\mathds R$, $b\neq 0$.
Then analogous
\[0=\langle (D^\circ_s\alpha)(L_1,L_2),\alpha(L_1,L_2)\rangle=-2a\langle\alpha(L_1,L_2),\alpha(L_1,L_2)\rangle.\]
For $\alpha\neq 0$ we obtain that $2a\neq 0$ is an eigenvalue of ${D_\mathfrak a}_s$ because of $D^\circ_s\alpha(L_1,L_2)=0$. Then $\alpha(\mathfrak l,\mathfrak l)$ is an one-dimensional isotropic subspace.
Moreover, the $3$-Form $\gamma$ on $\mathds{R}^2$ is trivial. 
Thus, for every of these cases the condition ($A_0$) or ($B_0$) is not satisfied and hence
$(\alpha,\gamma,\delta,\epsilon)\notin Z^2_{Q+}(\mathds R^2,D_\mathfrak l,\mathfrak a)_b$.
\end{proof}

Since the dimension of $\mathfrak l$ is limited by the index, we obtain directly Theorem \ref{thm:8} from Lemma \ref{lm:40}.
In addition Lemma \ref{lm:40} shows that there is no symplectic metric Lie algebra of dimension less than $6$, except for abelian ones.

Denote $\mathfrak h_3$ the three-dimensional Heisenberg algebra given by  $[e_1,e_2]=e_3$.

\begin{lemma}
The set of cocycles $Z^2_{Q+}(\mathfrak h_3,D_\mathfrak l,\mathfrak a)$ does not contain any balanced elements for every bijective derivation $D_\mathfrak l$ on the Heisenberg algebra $\mathfrak h_3$ and every abelian metric Lie algebra $\mathfrak a$ with skewsymmetric bijective derivation.
\end{lemma}

\begin{proof}
Suppose $(\alpha,\gamma,\delta,\epsilon)\in Z^2_{Q+}(\mathfrak h_3,D_\mathfrak l,\mathfrak a)$.
If $\alpha(\mathfrak h_3,\mathfrak z(\mathfrak h_3))=0$, then condition ($A_1$) is not satisfied.
Thus $(\alpha,\gamma,\delta,\epsilon)\notin Z^2_{Q+}(\mathfrak h_3,D_\mathfrak l,\mathfrak a)_b$.

Now, suppose $\alpha(\mathfrak h_3,\mathfrak z(\mathfrak h_3))\neq 0$ and assume that ${D_\mathfrak l}_s$ is diagonalizable over $\mathds R$.
Then there is a diagonal basis $\{L_1,L_2,L_3\}$ for ${D_\mathfrak l}_s$ satisfying $[L_1,L_2]=L_3$ and $\alpha(L_1,L_3)\neq 0$.
This means that
\[{D_\mathfrak l}_sL_1=\lambda_1L_1,\quad {D_\mathfrak l}_sL_2=\lambda_2L_2,\quad {D_\mathfrak l}_sL_3=(\lambda_1+\lambda_2)L_3,\]
where $\lambda_1,\lambda_2,\lambda_1+\lambda_2\neq 0$.
Because of
\begin{align*}
{D_\mathfrak a}_s\alpha(L_1,L_3)=D^\circ_s\alpha(L_1,L_3)+\alpha({D_\mathfrak l}_sL_1,L_3)+\alpha(L_1,{D_\mathfrak l}_sL_3)=(2\lambda_1+\lambda_2)\alpha(L_1,L_3)
\end{align*}
we obtain that $2\lambda_1+\lambda_2\neq 0$ is an eigenvalue of ${D_\mathfrak a}_s$. Thus $\alpha(L_1,L_3)\neq 0$ is isotropic as an eigenvector for $2\lambda_1+\lambda_2$, since ${D_\mathfrak a}_s$ is skewsymmetric.
Moreover,
\begin{align*}
0&=\langle{D_\mathfrak a}_s\alpha(L_1,L_3),\alpha(L_2,L_3)\rangle+\langle\alpha(L_1,L_3),{D_\mathfrak a}_s\alpha(L_2,L_3)\rangle\\
&=(2\lambda_1+\lambda_2)\langle \alpha(L_1,L_3),\alpha(L_2,L_3)\rangle+(\lambda_1+2\lambda_2)\langle\alpha(L_1,L_3),\alpha(L_2,L_3)\rangle\\
&=3(\lambda_1+\lambda_2)\langle\alpha(L_1,L_3),\alpha(L_2,L_3)\rangle.
\end{align*}
Since $D_\mathfrak l$ is bijective, it holds that $\lambda_1+\lambda_2\neq 0$ and hence $\langle\alpha(L_1,L_3),\alpha(L_2,L_3)\rangle=0$.
Now, assume that ${D_\mathfrak l}_s$ is not diagonalizable over $\mathds R$. Then there is a basis $\{L_1,L_2,L_3\}$ of $\mathfrak l$, which satisfies ${D_\mathfrak l}_sL_1=aL_1+bL_2$, ${D_\mathfrak l}_sL_2=-bL_1+aL_2$, ${D_\mathfrak l}_sL_3=2aL_3$ for $a,b\neq 0$ and $[L_1,L_2]=L_3$, since the center of $\mathfrak h_3$ is invariant under $D_\mathfrak l$.

For $i,j=1,2$ we consider 
\[0=\langle D^\circ_s\alpha(L_i,L_3),\alpha(L_j,L_3)\rangle\]
and obtain analogous that
\[\langle\alpha(L_1,L_3),\alpha(L_1,L_3)\rangle=\langle\alpha(L_1,L_3),\alpha(L_2,L_3)\rangle=0.\]
So, $\alpha(L_1,L_3)$ is orthogonal to every element in $\alpha(\mathfrak l,\mathfrak z(\mathfrak l))$ and ($B_1$) is not satisfied.
Thus $(\alpha,\gamma,\delta,\epsilon)\notin Z^2_{Q+}(\mathfrak l,D_\mathfrak l,\mathfrak a)_b$.
\end{proof}

Since $H^2_{Q+}(\mathfrak l,D_\mathfrak l,0)_b$ is empty for every $4$-dimensional Lie algebra $\mathfrak l$ and the Lie algebras $\mathds{R}$, $\mathds R^2$ and also $\mathfrak h_3$ are not admissable, we obtain Theorem \ref{thm:dim6} by determining $H^2_{Q+}(\mathds R^3,D_\mathfrak l,0)_b/G$.
For Theorem \ref{thm:dim8} it remains to determine $H^2_{Q+}(\mathds R^3,D_\mathfrak l,\mathfrak a)_b/G$ with $\mathfrak a\in\{\mathds{R}^2,\mathds R^{1,1},\mathds R^{2,0}\}$
and Theorem \ref{thm:index3} follows by calculating the $G$-orbits of $H^2_{Q+}(\mathds R^3,D_\mathfrak l,\mathds R^{2n})_b$ for $n\in\mathds{N}$. 

For $(\alpha,\gamma,\delta,\epsilon)\in Z^2_{Q+}(\mathds{R}^3,D_\mathfrak{l},\mathfrak{a})_b$  we have
\begin{align}\label{eq:a1}
0=&-\langle(D^\circ_s\alpha)(L_1,L_2),\alpha(L_3,L_4)\rangle-\langle\alpha(L_1,L_2),(D^\circ_s\alpha)(L_3,L_4)\rangle\\
=&\langle\alpha({D_\mathfrak{l}}_sL_1,L_2)+\alpha(L_1,{D_\mathfrak{l}}_sL_2),\alpha(L_3,L_4)\rangle
+\langle\alpha(L_1,L_2),\alpha({D_\mathfrak{l}}_sL_3,L_4)+\alpha(L_3,{D_\mathfrak{l}}_sL_4)\rangle\nonumber
\end{align}
for vectors $L_1,\dots,L_4\in\mathfrak l$ because of $D^\circ_s\alpha=0$ and the skewsymmetry of ${D_\mathfrak a}_s$.
Moreover, we have
\begin{align}\label{eq:a2}
0=-\langle(D^\circ_s\alpha)(L_1,L_2),\alpha(L_1,L_2)\rangle
=\langle\alpha({D_\mathfrak{l}}_sL_1,L_2)+\alpha(L_1,{D_\mathfrak{l}}_sL_2),\alpha(L_1,L_2)\rangle.
\end{align}

\begin{lemma}
For $(\alpha,\gamma,\delta,\epsilon)\in Z^2_{Q+}(\mathds{R}^3,D_\mathfrak{l},\mathfrak{a})_b$ the conditions
 \begin{itemize}
  \item $\alpha=0$,
  \item $\gamma\neq 0$,
  \item $\tr(D_\mathfrak{l})=0$
 \end{itemize}
are equivalent.
\end{lemma}

\begin{proof}
Suppose $(\alpha,\gamma,\delta,\epsilon)\in Z^2_{Q+}(\mathds{R}^3,D_\mathfrak{l},\mathfrak{a})_b$.
Let $D_\mathfrak{l}$ be a bijective zero-trace matrix.
At first, let $D_\mathfrak{l}$ be diagonalizable over $\mathds R$. 
Denote $\{L_1,L_2,L_3\}$ the basis of eigenvectors of ${D_\mathfrak{l}}_s$ for the corresponding eigenvalues $\lambda_1,\lambda_2,\lambda_3\neq 0$.
Consider
\begin{align*}
0=&\langle\alpha({D_\mathfrak{l}}_sL_i,L_j)+\alpha(L_i,{D_\mathfrak{l}}_sL_j),\alpha(L_p,L_k)\rangle
+\langle\alpha(L_i,L_j),\alpha({D_\mathfrak{l}}_sL_p,L_k)+\alpha(L_p,{D_\mathfrak{l}}_sL_k)\rangle\\
=&(\lambda_i+\lambda_j+\lambda_p+\lambda_k)\langle\alpha(L_i,L_j),\alpha(L_p,L_k)\rangle
\end{align*}
(compare to equation (\ref{eq:a1})).
Because of $\lambda_1+\lambda_2+\lambda_3=0$ we have $\langle\alpha(\mathfrak{l},\mathfrak{l}),\alpha(\mathfrak{l},\mathfrak{l})\rangle=0$.
Now ($B_0$) implies that $\alpha(\mathfrak{l},\mathfrak{l})$ is non-degenerate with respect to $\langle\cdot,\cdot\rangle_\mathfrak{a}$ and thus $\alpha=0$.
If the zero-trace matrix ${D_\mathfrak l}_s$ is not diagonalizable over $\mathds R$, then there is a basis $\{L_1,L_2,L_3\}$ of $\mathfrak l$ such that ${D_\mathfrak l}_sL_1=aL_1+bL_2$, ${D_\mathfrak l}_sL_2=-bL_1+aL_2$ and ${D_\mathfrak l}_sL_3=-2aL_3$ for $a,b\neq 0$.
As above, we can consider the equation
\[0=\langle \alpha({D_{\mathfrak l}}_sL_i,L_j)+\alpha(L_i,{D_{\mathfrak l}}_sL_j),\alpha(L_p,L_k)\rangle+\langle\alpha(L_i,L_j),\alpha({D_{\mathfrak l}}_sL_p,L_k)+\alpha(L_p,{D_{\mathfrak l}}_sL_k)\rangle\]
for $i,j,k,l\in\{1,2,3\}$ and obtain
$\langle\alpha(\mathfrak l,\mathfrak l),\alpha(\mathfrak l,\mathfrak l)\rangle=0$.
Thus $\alpha=0$ because of ($B_0$).

Now, suppose $\alpha=0$. Using ($A_0$) we obtain $\gamma\neq 0$.
Finally from $\gamma\neq 0$ follows directly that $D_\mathfrak{l}$ has zero trace by using
\[0=D^\circ_s\gamma=-\tr(D_\mathfrak{l})\gamma.\]
\end{proof}

Suppose $(\alpha,\gamma,\delta,\epsilon)\in Z^2_{Q+}(\mathds{R}^3,D_\mathfrak{l},\mathfrak{a})_b$.
If ${D_\mathfrak l}_s$ is not diagonalizable over $\mathds R$, then we consider the complexification of the derivations and differential forms.
Denote $\{L_1,L_2,L_3\}$ a basis of eigenvectors of ${D_\mathfrak{l}}_s$ for the corresponding eigenvalues $\lambda_1,\lambda_2,\lambda_3\neq 0$.
We consider
\begin{align}\label{eq:epsilon0}
0=(D^\circ_s\epsilon)(L_i,L_j)=-\epsilon({D_\mathfrak{l}}_sL_i,L_j)-\epsilon(L_i,{D_\mathfrak{l}}_sL_j)=-(\lambda_i+\lambda_j)\epsilon(L_i,L_j).
\end{align}

Of course, $\alpha=0$ and $\delta=0$ for $\mathfrak{a}=0$. So $D_\mathfrak{l}$ is a zero-trace matrix and $\gamma=k\sigma^{123}\neq 0$, where $\{\sigma^1,\sigma^2,\sigma^3\}$ is the dual of the standard basis of $\mathds{R}^3$.
Since the sum of two eigenvalues of ${D_\mathfrak{l}}_s$ is not zero, we have $\epsilon=0$.
Thus $(\alpha,\gamma,\delta,\epsilon)=(0,k\sigma^{123},0,0)$ with $k\neq 0$.
Finally
$(k^{-\frac{1}{3}} \id,\id)\in G(\mathds R^3,D_\mathfrak l,0)$
and we get 
\begin{align}\label{eq:e1}
(k^{-\frac{1}{3}} \id,\id)^*(0,k\sigma^{123},0,0)=(0,\sigma^{123},0,0).
\end{align}

Now, assume $\mathfrak{a}\neq\{0\}$.
At first, let us assume that $(\alpha,\gamma,\delta,\epsilon)\in Z^2_{Q+}(\mathds{R}^3,D_\mathfrak{l},\mathfrak{a})_b$ for a bijective zero- trace-matrix $D_\mathfrak{l}$.
Then, again, $\alpha=0$, $\gamma=k\sigma^1\wedge\sigma^2\wedge\sigma^3\neq 0$ and $\epsilon=0$ because of equation (\ref{eq:epsilon0}).
Now, consider 
\begin{align}\label{eq:deltaneq0}
0=(D^\circ_s\delta)(L_i)={D_\mathfrak{a}}_s(\delta(L_i))-\delta({D_\mathfrak{l}}_sL_i)={D_\mathfrak{a}}_s\delta(L_i)-\lambda_i\delta(L_i).
\end{align}
The eigenvalues of $D_\mathfrak{a}$ are purely imaginary for $\mathfrak{a}\in\{\mathds{R}^2,\mathds{R}^{2,0}\}$. But the eigenvalues of the zero-trace matrix ${D_\mathfrak{l}}_s$ can't be purely imaginary, so we have $\delta=0$.
Because of $(k^{-\frac{1}{3}} \id,\id)\in G(\mathds R^3,D_\mathfrak l,0)$ and equation (\ref{eq:e1}) the cohomology class \[H^2_{Q+}(\mathds{R}^3,D_\mathfrak{l},\mathfrak{a})_b/G(\mathds{R}^3,D_\mathfrak{l},\mathfrak{a})\] consists of only one element for bijective zero-trace matrices $D_\mathfrak{l}$ and $\mathfrak{a}\in\{\mathds{R}^2,\mathds{R}^{2,0}\}$, which is represented by $[0,\sigma^{123},0,0]$. The same holds for an arbitrary euclidian vector space $\mathfrak a$ of even dimension.

Now, assume $\mathfrak{a}=\mathds{R}^{1,1}$.
If there is no real number, which is an eigenvalue of ${D_\mathfrak a}_s=D_\mathfrak a$ and ${D_\mathfrak l}_s$ at the same time, then $\delta=0$ because of equation (\ref{eq:deltaneq0}).
If there is such a real number, then this number is unique, since both eigenvalues $\pm s\neq 0$ of $D_\mathfrak a$ can't be eigenvalues of the bijective zero-trace matrix ${D_\mathfrak l}_s$ at the same time. Thus $\delta(\mathfrak l)$ is either $\{0\}$ or it spanns a one-dimensional eigenspace of $D_\mathfrak a$.
Assume $\delta\neq 0$. If there is no eigenvector $\hat v$ of $D_\mathfrak l$ such that $\delta(\hat v)\neq 0$, then there is a vector $v$ in the generalized eigenspace $\pm s$ of $D_\mathfrak l$ satisfying $\delta(v)\neq 0$ because of equation (\ref{eq:deltaneq0}). Especially the generalized eigenspace for $\pm s$ is two-dimensional and the generalized eigenspace for $\mp 2s$ is one-dimensional.
The vector $v$ is no eigenvector of $D_\mathfrak l$. Thus there is an eigenvector $\tilde v$ of $D_\mathfrak l$ with $D_\mathfrak lv=\pm s+\tilde v$.
We define $\tau\in C^1(\mathds{R}^3,\phi_\mathfrak l,\mathds{R}^{1,1})$ by $\tau(\tilde v)=\delta(v)$, $\tau(v)=0$ and $\tau(w)=0$ for an eigenvector $w$ for the eigenvalue $\mp2s$.
Then $D^\circ_s\tau=0$ because of $D^\circ_s\delta=0$ and 
\[D^\circ\tau(v)=D^\circ_s\tau(v)-\tau(\tilde v)=-\delta(v).\]
Hence $(0,\gamma,\delta,0)(\tau,0)=(0,\gamma,0,0)$ and thus $(0,\gamma,\delta,0)$ and $(0,\gamma,0,0)$ are equivalent.
Using the morphism of pairs $(S,U)=(k^{-\frac{1}{3}} \id,\id)$ we obtain
\[(S,U)^*(0,\gamma,0,0)=(0,\sigma^{123},0,0).\]

Now, let there be a real eigenvector $D_\mathfrak{l}$, which spanns the one-dimensional subspace $\delta(\mathfrak{l})$.
This case can't be reduced to the previous case, since this property is invariant under the $G$-action. Moreover, it is invariant under equivalence, since
\[(\delta+D^\circ\tau)(v)=\delta(v)+D_\mathfrak a\tau(v)-\tau(D_\mathfrak l(v))=\delta(v)+{D_\mathfrak a}_s\tau(v)-\tau({D_\mathfrak l}_s(v))=(\delta+D^\circ_s\tau)(v)=\delta(v)\]
holds for  an eigenvector $v$ of $D_\mathfrak{l}$.
We choose an eigenvector $v_1\in\mathfrak l$ of $D_\mathfrak l$, which satisfies $\delta(v_1)\neq 0$.
Then we extend $v_1$ to a real Jordan or orthonormal basis $\{v_1,v_2,v_3\}$ of $D_\mathfrak{l}$ satisfying $\delta(v_2)=\delta(v_3)=0$.
We obtain $\gamma=k\sigma^{123}\neq 0$.
Furthermore, we choose a vector $a_2$ in $\mathfrak a$ such that $a_1:=\delta(v_1)$ and $a_2$ form a Witt basis of $\mathfrak{a}=\mathds{R}^{1,1}$.
We set 
\[(S,U)=(k^{-\frac{1}{3}} \id,\diag(k^{\frac{1}{3}},k^{-\frac{1}{3}}))\in G(\mathds R^3,D_\mathfrak l,\mathds R^{1,1})\]
and obtain \[(S,U)^*(0,\gamma,\delta,0)=(0,\sigma^{123},\delta,0).\]
So this shows that every $(0,\gamma,\delta,0)$ with an one-dimensional subspace $\delta(\mathfrak{l})$, which is spanned by an eigenvector of $D_\mathfrak{l}$, lives in the same $G$-orbit.
Finally we have shown that for a bijectice zero-trace matrix $D_\mathfrak{l}$
\[H^2_{Q+}(\mathds R^3,D_\mathfrak l,\mathds R^{1,1})_b/G(\mathds R^3,D_\mathfrak l,\mathds R^{1,1})\] consists of only one element, if no eigenvalue of ${D_\mathfrak l}_s$ is one of $D_\mathfrak a$.
If there is an eigenvalue of ${D_\mathfrak l}_s$, which is one of $D_\mathfrak{a}$, then the cohomology class has two elements.
\\\quad\\
Assume $(\alpha,\gamma,\delta,\epsilon)\in Z^2_{Q+}(\mathds{R}^3,D_\mathfrak{l},\mathfrak{a})_b$ for bijective $D_\mathfrak{l}$ with $\tr(D_\mathfrak{l})\neq 0$.
Then $\gamma=0$ and thus $\alpha\neq 0$.

At first, we assume $\mathfrak{a}=\mathds{R}^2,\mathds{R}^{2,0}$.
Using $D^\circ_s\alpha=0$ we see that $D_\mathfrak{l}$ has the eigenvalues $a+is$, $a-is$ and $-a$ for an $a\neq 0$, where $\pm is$ are the eigenvalues of $D_\mathfrak{a}$ with $s>0$.
Since $D_\mathfrak{l}$ has no purely imaginary eigenvalues and, furthermore, the sum of two eigenvalues is not equal to zero, we get $\delta=0$ and $\epsilon=0$ (also compare to equation (\ref{eq:epsilon0}) and (\ref{eq:deltaneq0})).

Now, we consider 
$H^2_{Q+}(\mathds R^3,D_\mathfrak l,\mathfrak a)_b/G$, where
\[D_\mathfrak l=\begin{pmatrix}a&-s&\\s&a&\\&&-a\end{pmatrix},\quad D_\mathfrak a=\begin{pmatrix}&-s\\s&\end{pmatrix}\]
with $s>0$ and $a\neq 0$.
We have $\alpha(e_1,e_2)=0$, since $D^\circ_s\alpha=0$.
It also holds that $\alpha(e_1,e_3)\neq 0$ and $\alpha(e_2,e_3)\neq 0$ are linearly independent because of ($A_0$).
Moreover, $\alpha(e_1,e_3)$ and $\alpha(e_2,e_3)$ are orthogonal to each other, since
\[0=\langle \alpha({D_\mathfrak l}_se_1,e_3)+\alpha(e_1,{D_\mathfrak l}_se_3),\alpha(e_1,e_3)\rangle=-s\langle\alpha(e_2,e_3),\alpha(e_1,e_3)\rangle,\]
and they have the same length because of
\begin{align*}
0&=\langle \alpha({D_\mathfrak l}_se_1,e_3)+\alpha(e_1,{D_\mathfrak l}_se_3),\alpha(e_2,e_3)\rangle+0=\langle \alpha(e_1,e_3),\alpha({D_\mathfrak l}_se_2,e_3)+\alpha(e_2,{D_\mathfrak l}_se_3)\rangle\\
&=s\langle \alpha(e_2,e_3),\alpha(e_2,e_3)\rangle-s\langle \alpha(e_1,e_3),\alpha(e_1,e_3)\rangle.
\end{align*}
We normalize the vectors $\{\alpha(e_1,e_3),\alpha(e_2,e_3)\}$ to an orthonormal basis $\{v_1,v_2\}$ of $\mathfrak a$.
Then $\alpha=\mu(\sigma^{13}\otimes v_1 +\sigma^{23}\otimes v_2)$ with $\mu\neq 0$ because of $D^\circ_s\alpha=0$
and using 
\[(S,U)=(\diag(1,1,\mu^{-1}),\id)\in G\]
yields
\[(S,U)^*(\alpha,0,0,0)=(\sigma^{13}\otimes v_1+\sigma^{23}\otimes v_2,0,0,0).\]
Hence, we can represent $H^2_{Q+}(\mathds{R}^3,D_\mathfrak l,\mathfrak{a})_b/G$ by $[\sigma^{13}\otimes a_1+\sigma^{23}\otimes a_2,0,0,0]$.
Here $\{a_1,a_2\}$ denotes the standard basis of $\mathfrak a$.
For an arbitrary euclidian vectorspace $\mathfrak a$ of even dimension with orthonormal basis $\{a_1,\dots,a_{2n}\}$ we have analogous 
\[H^2_{Q+}(\mathds{R}^3,D_\mathfrak l,\mathfrak{a})_b/G=\{[\sigma^{13}\otimes a_1+\sigma^{23}\otimes a_2,0,0,0]\}.\]

Now, assume $\mathfrak{a}=\mathds{R}^{1,1}$.
Because of $D^\circ_s\alpha=0$ and ($B_0$) we know that the sum of two eigenvalues of ${D_\mathfrak{l}}_s$ equals $s$ and another sum equals $-s$, where $\pm s$ denote the eigenvalues of $D_\mathfrak{a}$ with $s>0$.
This especially implies that the eigenvalues of ${D_\mathfrak{l}}_s$ are real numbers.
Since the sum of two eigenvalues is not zero, we obtain $\epsilon\equiv 0$.

Let $D_\mathfrak l$ be semisimple.
Since the eigenvalues of $D_\mathfrak{l}$ are real numbers, there is a basis of eigenvectors $\{L_1,L_2,L_3\}$.
This basis is w. l. o. g. given such that, the sum of the eigenvalues of  $L_1$ and $L_2$ equals $s$ and the sum of the eigenvalues of $L_2$ and $L_3$ equals $-s$.
Since $\tr D_\mathfrak{l}\neq 0$, we have $\gamma=0$ and $\alpha(\mathfrak{l},\mathfrak{l})\neq 0$ is non-degenerate.
Thus $\alpha(L_1,L_2)$ is an eigenvector of $D_\mathfrak{a}$ for the eigenvalue $s$ and $\alpha(L_2,L_3)$ an eigenvector of $D_\mathfrak{a}$ for $-s$.
Moreover, $\tr D_\mathfrak{l}\neq 0$ implies that the sum of the eigenvalues of $L_1$ and $L_3$ is not equal to $\pm s$ and hence $\alpha(L_1,L_3)=0$.
W. l. o. g. we choose $\{L_1,L_2,L_3\}$ such that $\{\alpha(L_1,L_2),\alpha(L_2,L_3)\}$ is a Witt basis of $\mathfrak{a}$.

If no eigenvalue of $D_\mathfrak l$ is an eigenvalue of $D_\mathfrak a$, then $\delta=0$, because of (\ref{eq:deltaneq0}). If there is such an eigenvalue, then we have on the one hand $\delta=k\sigma^1\otimes a_2$ for $-s$ being an eigenvalue of $D_\mathfrak l$ or on the other hand $\delta=k\sigma^3\otimes a_1$for $s$ being an eigenvalue of $D_\mathfrak l$, since $D^\circ\delta=0$. Here $k\in\mathds R$.
For $\delta\neq 0$ the cocycles $(\alpha,0,0,0)$ and $(\alpha,0,\delta,0)$ are not equivalent, nor they lie in the same $G$-orbit. Moreover,  $(S,U)^*(\alpha,0,\delta,0)=(\alpha,0,k^{-1}\delta,0)$ for $(S,U)=(\diag(k^{-1},k,k^{-1}),\operatorname{id})\in G$.
For a semisimple $D_\mathfrak l$, whose trace doesn't vanish, the cohomology class $H^2_{Q+}(\mathfrak l,D_\mathfrak l,\mathfrak a)_b/G$ consists of
\begin{itemize}
 \item no elements, if one eigenvalue of $D_\mathfrak a$ is not the sum of two eigenvalues of $D_\mathfrak l$,
 \item exactly one element, if both eigenvalue of $D_\mathfrak a$ are the sum of two eigenvalues of $D_\mathfrak l$, but there is no eigenvalue of $D_\mathfrak l$ which is also an eigenvalue of $D_\mathfrak a$ and
 \item exactly two elements, otherwise.
\end{itemize}
For a $D_\mathfrak l$, which is not semisimple, we have 
\[0=(d\delta-D^\circ\alpha)(L_2,L_3)=D^\circ_s\alpha(L_2,L_3)-\alpha(L_1,L_3)=-\alpha(L_1,L_3).\]
Here $\{L_1,L_2,L_3\}$ denotes a Jordan basis of $D_\mathfrak l$, where $D_\mathfrak lL_1=\pm \frac{s}{2}L_1$, $D_\mathfrak lL_2=\pm \frac{s}{2}L_2+L_1$ and $D_\mathfrak lL_3=\mp \frac{3}{2}sL_3$.
It is easy to see that there is a Jordan basis such that $\alpha(L_1,L_2)$ is an eigenvector for $\pm s$, $\alpha(L_2,L_3)$ is one for $\mp s$ and $\{\alpha(L_1,L_2),\alpha(L_2,L_3)\}$ is a Witt basis of $\mathds R^{1,1}$. Since $\pm s$ is no eigenvalue of $D_\mathfrak l$, we obtain $\delta=0$ using equation (\ref{eq:deltaneq0}).
Thus, in this case $H^2_{Q+}(\mathds R^3,D_\mathfrak l,\mathds R^{1,1})_b/G$ consists of exactly one element.
\\\quad\\
At this moment we determined the $G$-orbits of $H^2_{Q+}(\mathds R^3,D_\mathfrak l,\mathfrak a)_b$ for every derivation $D_\mathfrak l$ of $\mathfrak l=\mathds R^3$ and every trivial $(\mathfrak l,D_\mathfrak l)$-moduls $(\mathfrak a,D_\mathfrak a)$, where $\mathfrak a\in\{0,\mathds R^2,\mathds R^{1,1},\mathds R^{2,0},\mathds R^{2n},\}$ and $D_\mathfrak a$ is bijective.
Choosing representatives of the conjugation classes of the derivations of  $\mathfrak l$ and $\mathfrak a$ yield the theorems.
\end{proof}

% %%%%%%%%%%%%%% B I B L I O G R A P H I E

\end{document}